 \newtheorem{thm}{Theorem}[section]
 \newtheorem{cor}[thm]{Corollary}
 \newtheorem{lem}[thm]{Lemma}
 \newtheorem{prop}[thm]{Proposition}
 \theoremstyle{definition}
 \newtheorem{defn}[thm]{Definition}
 \theoremstyle{remark}
 \newtheorem{rem}[thm]{Remark}
 \numberwithin{equation}{section}
\numberwithin{equation}{section}
\newcommand\CL{{\mathcal L}}
\newcommand\CK{{\mathcal K}}
\newcommand{\dom}{\mathop{\rm dom}}
\newcommand{\Ran}{\mathop{\rm ran}}
\newcommand{\N}{{\mathbb N}}
\newcommand{\R}{{\mathbb R}}
\newcommand{\C}{{\mathbb C}}
\newcommand{\Z}{{\mathbb Z}}
\newcommand{\T}{{\mathbb{T}}}
\newcommand\wh{\widehat}
\newcommand\Bs{\mathcal{B}}
\newcommand\Ls{\mathcal{L}}
\newcommand\Ks{\mathcal{K}}
\newcommand\G{\Gamma}
\newcommand\Gh{\widehat{\Gamma}}
\newcommand\Kk{\widetilde{K}}
\newcommand{\bet}{\widehat{\beta}}
\newcommand{\alp}{\widehat{\alpha}}
\newcommand{\Hs}{\mathcal{H}}
\newcounter{marke}
\newcommand{\bl}{\begin{list}{\roman{marke})}{\usecounter{marke}
\topsep 0 cm \itemsep 0cm}}
\newcommand{\el}{\end{list}}
\newcommand\be{\begin{equation}}
\newcommand\ee{\end{equation}}
\newcommand{\Ltwo}{L^{2}(\mathbb{T}^{n})}
\newcommand{\eC}{\overline{\C}}
\newcommand{\domain}{\mathcal{C}}
\newcommand{\bb}{b}
\newcommand{\cc}{c}
\newcommand{\ac}{\alpha_0}
\newcommand{\oc}{\omega_0}
\newcommand{\alc}{\alpha'}
\newcommand{\bec}{\beta'}
\newcommand\ip[2]{\langle #1, #2 \rangle}
\begin{document}

\title[Accumulation of Complex Eigenvalues\ ]{Accumulation of Complex Eigenvalues of a Class of Analytic Operator Functions }

\author{Christian Engstr\"om}  \address{Department of Mathematics and Mathematical Statistics, Ume\aa \ University, SE-901 87 Ume\aa, Sweden}
\email{christian.engstrom@math.umu.se}

\author{Axel Torshage}  
\address{Department of Mathematics and Mathematical Statistics, Ume\aa \ University, SE-901 87 Ume\aa, Sweden}
\email{axel.torshage@math.umu.se}

\subjclass{47A56, 47J10, 47A10, 47A12}
\keywords{Operator pencil, spectral divisor, numerical range, non-linear spectral problem}

\begin{abstract}
For analytic operator functions, we prove accumulation of branches of complex eigenvalues to the essential spectrum. Moreover, we show minimality and completeness of the corresponding system of eigenvectors and associated vectors. These results are used to prove sufficient conditions for eigenvalue accumulation to the poles and to infinity of rational operator functions. Finally, an application of electromagnetic field theory is given.
\end{abstract}

\maketitle
\section{Introduction} 
 In recent years quantitative information on the discrete spectrum of non-self\- adjoint operators has gained considerable interest \cite{MR1819914,MR2540070,MR2820160,MR3054310,MR3556444}. In particularly, Pavlov's influential papers on accumulation of complex eigenvalues to the essential spectrum of Schr\"odinger operators with non-selfadjoint Robin boundary conditions \cite{MR0203530,MR0234319,MR0348554} have been extended to magnetic Schr\"odinger operators \cite{MR3682742} and to Schr\"odinger operators with a complex potential \cite{MR3627408}. Importantly, the potential will in some cases also depend on time \cite{MR2038194} and differential operators with time dependent coefficients are common in e.g. electromagnetics \cite{MR1409140} and viscoelasticity \cite{Sanchez-Palencia}. In these cases, theory for operator functions with a non-linear dependence of the spectral parameter is used to determine the spectral properties. Krein \& Langer \cite{KL78} proved for selfadjoint quadratic operator polynomials $\lambda-A-\lambda^2 B$ with real numerical range, sufficient conditions for eigenvalue accumulation in terms of the numerical range of $A$ and the numerical range of $B$. More recently, selfadjoint operator functions with real eigenvalues have been studied extensively \cite{MR2216445,MR2397852,MR2931941,MR3460233,MR3543766}.  Still, there have been very few results on accumulation of complex eigenvalues of operator functions with the notable exception \cite{MR1911850} that proved accumulation of eigenvalues for a quadratic operator polynomial from elasticity theory.

In this paper, we study polynomial and rational operator functions. These types of functions share spectral properties with a linear non-selfadjoint operator called the linearized operator. However, since in non-trivial cases the linearized operator is not a relatively compact perturbation of a selfadjoint operator, the known results for non-selfadjoint operators can not be used to prove accumulation of eigenvalues. Therefore, we will explore factorization results for holomorphic operator functions \cite[Chapter III]{ASM}. We  extend theory based on those factorization results with the aim to provide sufficient conditions for eigenvalue accumulation of a class of unbounded rational operator functions. A major difficulty is that in theory based on the factorization of operator polynomials one must know specific properties of the numerical range to prove accumulation of eigenvalues. The main contribution of \cite{MR1911850} is that they for the particular quadratic operator polynomial prove the existence of a bounded part of the numerical range that is separated from rest of the numerical range.
In  \cite{ATEN}, we proposed a new type of enclosure of the numerical range that can be used to determine the number of components of the numerical range. Importantly, it is much easier to determine the number of components of this enclosure than to directly determine the number of components of the numerical range. Therefore, the new enclosure of the numerical range is used to prove accumulation of eigenvalues, and to prove completeness of the corresponding system of eigenvectors and associated vectors.

The paper is organized as follows. In Section $2$, we present the basic notation and definitions used in the paper. 

In Section $3$, we consider for a class of bounded analytic operator functions minimality and completeness of the set of eigenvectors and associated vectors corresponding to a branch of eigenvalues.  In particular, we generalize \cite[Theorem 22.13]{ASM} to cases with a spectral divisor of order larger than one. Our main results are Theorem \ref{main} and Theorem \ref{main2}.

In Section $4$, we study rational operator functions whose values are Fredholm operators. The main results are Theorem \ref{prop:mult} and Theorem \ref{prop:inf}, which utilize the results presented in Section $3$ to show accumulation of complex eigenvalues to the poles and to complex infinity.

In Section $5$, we take advantage of the enclosure of the numerical range introduced in \cite{ATEN} and study a particular class of unbounded rational operator functions in detail.  The main results are Theorem \ref{3lemallB}, Theorem \ref{3lemallBi}, and Theorem \ref{3lemallBt} that state explicit sufficient conditions for accumulation of complex eigenvalues to the poles, and completeness of the corresponding set of eigenvectors and associated vectors. 

In Section $6$, we apply our abstract results to a rational operator function with applications in electromagnetic field theory.

\section{Preliminaries}
In this section we introduce the notation and the operator theoretic framework used in the rest of the paper. 

Throughout this paper is $\Hs$ a separable Hilbert space with inner product $\ip{\cdot}{\cdot}$ and norm $\|\cdot\|$. Let $\CL (\Hs)$ denote the collection of closed linear operators on $\Hs$ and denote by $\sigma(A)$ the spectrum of $A\in\CL (\Hs)$. The essential spectrum $\sigma_{ess}(A)$ is defined as the subset of $\sigma(A)$ where $A-\lambda$ is not a Fredholm operator and the discrete spectrum $\sigma_{disc}(A)$ is the set of all isolated eigenvalues of finite multiplicity. Moreover, the sets $\sigma_p(A)$, $\sigma_c(A)$, and $\sigma_r(A)$ are the point-, continuous-, and residual spectrum, respectively. With $\Ran A$ we denote the range of $A$ and $\ker A$ denotes the kernel of $A$. Further if $A$ is unbounded we let $\dom A$ denote the domain of $A$.

Let $\Bs(\Hs)$ and $\Ks (\Hs)$ denote the spaces of bounded and compact operators, respectively. The Schatten-von Neumann class is defined as
\[
	S_p(\Hs):=\left\{A\in \Ks(\Hs):\sum_{i=1}^{\infty}s_i(A)^p<\infty\right\},
\]
where $s_i(A)$ are the singular values ($s$-numbers) of $A$ and $1\leq p <\infty$.  

Assume that $\Hs= \Hs_1\oplus\Hs_0$ is the orthogonal sum of two Hilbert spaces and let $\Pi_i:\Hs\rightarrow \Hs$, $i=0,1$ be the orthogonal projection on $\Hs_i$. Then, the inner product on $\Hs_i$ is  $\ip{\cdot}{\cdot}_{\Hs_i}=\ip{V_i\cdot}{V_i\cdot}$, where  $V_i^*:\Hs\rightarrow\Hs_i$ is defined by the partial isometry
\begin{equation}\label{1partis}
V_i^*u=\bigg\{\begin{array}{l l}
u,&u\in\Hs_i\\
0,&u\in\Hs_i^\bot
\end{array},
\end{equation}
which implies $V_i^*V_i=I_{\Hs_i}$ and $V_iV_i^*=\Pi_i$. 
Let $A\in\Bs(\Hs)$, then $A\in\Bs(\Hs_1\oplus\Hs_0)$ has the block operator matrix representation
\begin{equation}\label{eq:BlockRep}
	A=\begin{bmatrix}
	V_1^*AV_1&V_1^*AV_0\\
	V_0^*AV_1&V_0^*AV_0\\
	\end{bmatrix}.
\end{equation}

An operator function $T:\domain\rightarrow\CL (\Hs)$ with domain $\dom T$ is defined on a set $\domain\subset\C$ and take values in $\CL (\Hs)$. The spectrum of $T$ is defined as
\begin{equation}
	\sigma (T):=\{\lambda\in\domain\,:\, 0\in\sigma(T(\lambda))\}
\end{equation}
and $\lambda\in\domain$ is called an eigenvalue of $T$ if there exists an $u\in\dom T\setminus\{0\}$ such that $T(\lambda)u=0$. Let $P:\C\rightarrow\Bs(\Hs)$ denote an analytic operator function and assume that for some $\lambda\in\C$ there are $\{u_i\}_{i=0}^{m-1}\in \Hs$ such that 
\[
	\sum_{i=0}^j\frac{1}{i!}P^{(i)}(\lambda)u_{j-i}=0,\quad j=0,\hdots,m-1.
\] 
Then $\{u_i\}_{i=0}^{m-1}$ is said to be a Jordan chain of length $m$ at $\lambda$, \cite[\textsection 11]{ASM}. 

In the paper we will use the concepts of equivalence for bounded operator functions. The bounded operator functions $P:\domain\rightarrow\Bs(\Hs)$ and $R:\domain\rightarrow\Bs(\Hs)$ are called equivalent on $\domain\subset\C$ if there exist bounded operators $E(\lambda)$ and $F(\lambda)$ invertible for all $\lambda\in \domain$ such that $P(\lambda)=E(\lambda)R(\lambda)F(\lambda)$, $\lambda\in\domain$. Let $I_{\Hs_0}$ denote the identity operator on $\Hs_0\subset\Hs$. If $R$ and $C\oplus I_{\Hs_0}$ are equivalent on $\domain$, then $R$ is said to be equivalent to $C$ on $\domain$ after extension, \cite{MR0482317,ATEQ}.

Let $P$ be a bounded operator polynomial, then the bounded operator polynomial $R$ is called a spectral divisor of order $k$ if there exists a bounded operator polynomial $Q:\C\rightarrow \Bs(\Hs)$ such that
\begin{equation}\label{spdiv}
	P(\lambda)=Q(\lambda)R(\lambda)=\left(\sum_{i=0}^{M-k}\lambda^iQ_i\right)\left(\lambda^k+\sum_{i=0}^{k-1}\lambda^iR_i\right),
\end{equation}
where, $\sigma(R)= \G\cap \sigma(P)$ and $\sigma(Q)\subset \C\setminus \overline{\G}$ for some open $\G\subset \C$. Note that if $R$ is a spectral divisor of $P$, then $R$ is equivalent to $P$ on $\G$. If $k=1$, the operator $-R_0$ is called a spectral root of $P$ on $\G$.

The numerical range of an operator function $T:\domain\rightarrow\CL (\Hs)$ with domain $\dom T$ is the set
\begin{equation}
	W (T):=\{\lambda\in\domain\,:\, \exists u\in\dom T(\lambda)\setminus\{0\}, \ip{T(\lambda)u}{u}=0\},
\end{equation}
which is disconnected in general.

\section{Bounded analytic operator functions}
Consider the bounded operator function $P:\C\rightarrow\Bs(\Hs)$ defined as
\begin{equation}\label{Adef2}
P(\lambda):=\sum_{i=0}^M \lambda^i P_i,\quad P_0=(I_{\Hs}+K)H, \quad P_k=I_{\Hs}+\Kk ,
\end{equation}
where $M\in\N\cup\{\infty\}$,  $k\in \N$, and $k\leq M$. Assume that $K$, $\Kk$, and $P_i$ for $i=1,\hdots k-1$ are compact and $P$ has a spectral divisor $R$ of order $k$  on some open set $\G\subset \C$ with $0\in\G$. Further $H\in S_p(\Hs)$ is normal and has its spectrum located on a finite number of rays from the origin. 
\begin{lem}\label{1lemsd}
Let $R$ denote the spectral divisor of order $k$ of $P$.
 Then $R_i$ is compact for $i=0,\hdots,k-1$ and $R_0$ can be written in the form $R_0=(I_{\Hs}+\widehat{K})H$ for some compact $\widehat{K}$. Furthermore, $I_{\Hs}+\widehat{K}$ is invertible if and only if $I_{\Hs}+K$ is invertible and
\begin{equation}\label{1samker}
	 \bigcap_{j=0}^{i}\ker P_j\subset \ker R_i.
\end{equation}
\end{lem}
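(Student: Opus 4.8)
The plan is to exploit the factorization $P(\lambda)=Q(\lambda)R(\lambda)$ together with the structure of $P$ given in \eqref{Adef2}. First I would compare the constant terms on both sides. Since $R(\lambda)=\lambda^k+\sum_{i=0}^{k-1}\lambda^i R_i$ and $Q(\lambda)=\sum_{i=0}^{M-k}\lambda^i Q_i$, the identity $P(\lambda)=Q(\lambda)R(\lambda)$ gives, by matching powers of $\lambda$, a triangular system: $P_0=Q_0R_0$, and more generally $P_j=\sum_{\ell} Q_\ell R_{j-\ell}$ for $j<k$ (with the convention $R_k=I_{\Hs}$). Because $\sigma(Q)\subset\C\setminus\overline{\G}$ and $0\in\G$, the operator $Q(0)=Q_0$ is invertible; moreover $Q_0$ is a compact perturbation of a scalar multiple of the identity (indeed, comparing the top coefficient and using $P_k=I_{\Hs}+\Kk$ one sees $Q_0^{-1}$ is of the form identity-plus-compact, hence $Q_0=I_{\Hs}+(\text{compact})$ after normalization — here I would need to check the normalization carefully, but the spectral divisor being monic of degree $k$ forces $Q$ to have the same leading behaviour as $P$).

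Next, from $P_0=Q_0R_0$ and $P_0=(I_{\Hs}+K)H$ I would solve $R_0=Q_0^{-1}(I_{\Hs}+K)H$. Writing $Q_0^{-1}=I_{\Hs}+C$ with $C$ compact (this is the point where I use that $Q_0$ is identity-plus-compact and invertible, so its inverse is too), we get
\[
R_0=(I_{\Hs}+C)(I_{\Hs}+K)H=(I_{\Hs}+\widehat K)H,\qquad \widehat K:=C+K+CK,
\]
which is compact since $C$ and $K$ are. This simultaneously shows $R_0$ has the claimed form and, since $I_{\Hs}+\widehat K=Q_0^{-1}(I_{\Hs}+K)$ with $Q_0^{-1}$ invertible, that $I_{\Hs}+\widehat K$ is invertible precisely when $I_{\Hs}+K$ is. Compactness of $R_i$ for $i=1,\dots,k-1$ then follows by induction on $j$ from the relations $P_j-\sum_{\ell\geq 1}Q_\ell R_{j-\ell}=Q_0R_j$: the left-hand side is a sum of compacts (using the already established compactness of $R_0,\dots,R_{j-1}$ and that $P_1,\dots,P_{k-1},\Kk$ are compact while the $\lambda^k$-term contributes only to $P_k$), so $R_j=Q_0^{-1}(\text{compact})$ is compact.

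For the inclusion \eqref{1samker}, I would argue directly from $P_j=\sum_{\ell=0}^{j}Q_\ell R_{j-\ell}$ (for $j\leq k-1$, with $R$'s leading term not yet entering). Take $u\in\bigcap_{n=0}^{i}\ker P_n$ and prove $u\in\ker R_i$ by induction on $i$: for $i=0$, $P_0u=Q_0R_0u=0$ and invertibility of $Q_0$ give $R_0u=0$; assuming $R_0u=\cdots=R_{i-1}u=0$, the relation $P_iu=\sum_{\ell=0}^{i}Q_\ell R_{i-\ell}u=Q_0R_iu$ collapses (all terms with $R_{i-\ell}u$, $\ell\geq 1$, vanish) and again invertibility of $Q_0$ yields $R_iu=0$. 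The main obstacle I anticipate is pinning down the precise form of $Q_0$ and the exact coefficient-matching conventions when $M=\infty$ (so that $Q$ is a genuine power series rather than a polynomial) and when $k>1$ so that several low-order coefficients of $R$ interact; once the bookkeeping of \eqref{spdiv} is set up correctly, each individual step is a short computation with compact operators.
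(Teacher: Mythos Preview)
Your approach is essentially the same as the paper's: match coefficients in $P(\lambda)=Q(\lambda)R(\lambda)$, use invertibility of $Q_0$ (from $0\in\G$ and $\sigma(Q)\cap\G=\emptyset$) to solve the resulting triangular system for the $R_i$, and then read off the structure of $R_0$ and the kernel inclusion.

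One small logical reordering is worth noting. You assert early that $Q_0$ is identity-plus-compact by ``comparing the top coefficient,'' but the $\lambda^k$-coefficient identity is
\[
P_k = Q_0 + \sum_{j=1}^{k} Q_j R_{k-j},
\]
so concluding $Q_0=I_{\Hs}+(\text{compact})$ from $P_k=I_{\Hs}+\Kk$ already requires $R_0,\dots,R_{k-1}$ compact. The paper avoids this circularity by first proving $R_i$ compact (your step~5), which only needs $Q_0^{-1}$ \emph{bounded}, and only then reading off from the $\lambda^k$-equation that $Q_0$ (hence $Q_0^{-1}$) is identity-plus-compact, giving $R_0=(I_{\Hs}+\widehat K)H$. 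All the ingredients are in your write-up; just swap the order of those two steps and the argument is complete.
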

\begin{proof}
In the proof, we consider the equality $P(\lambda)=Q(\lambda)R(\lambda)$, $\lambda\in\C$ term-wise. From the assumptions follow $\sigma(Q)\cap \G=\emptyset$ and thus $0\notin\sigma(Q)$, which implies that
\begin{equation}\label{Cprops}
	\begin{array}{l}
	R_0=Q_0^{-1}P_0,\\
	R_1=Q_0^{-1}\left(P_1-Q_1R_0\right),\\
	R_2=Q_0^{-1}\left(P_2-Q_1R_1-Q_2R_0\right),\\
	\qquad \vdots\\
	R_{k-1}=Q_0^{-1}\left(P_{k-1}-\sum_{i=1}^{k-1}Q_iR_{k-1-i}\right),\\
	Q_0=I_{\Hs}+\Kk -\sum_{i=1}^{k}Q_iR_{k-i}.\\
	\end{array}
\end{equation}
Since $P_i$ is compact for $i=0,\hdots,k-1$, it follows from \eqref{Cprops} that $R_i$, $i=0,\hdots,k-1$ are compact and that the inclusion \eqref{1samker} holds. This yields that $Q_0$ is the sum of the identity operator and a compact operator, which implies $Q_0^{-1}=I_{\Hs}+K'$ where $K'$ is compact, and thus
\[
	R_0=(I_{\Hs}+K')(I_{\Hs}+K)H=(I_{\Hs}+\widehat{K})H.
\]
Finally the invertibility of $I_{\Hs}+K'$ implies that $I_{\Hs}+\widehat{K}$ is invertible if and only if $I_{\Hs}+K$ is invertible.
\end{proof}

Set $\Hs_0:=\ker H$, $\Hs_1:=\Hs\bot\Hs_0$, and let $V_i$ for $i=0,1$ denote the partial isometry \eqref{1partis}. Since $H$ is a normal operator that vanish on $\Hs_0$, the operator 
$H\in \Bs(\Hs_1\oplus\Hs_0)$ has the representation 
\begin{equation}\label{H1norm}
	H=\begin{bmatrix}
	H_1& 0\\
	0& 0
	\end{bmatrix}=\begin{bmatrix}
	V_1^*HV_1& 0\\
	0& 0
	\end{bmatrix},
\end{equation}
where $H_1\in S_p(\Hs_1)$ is normal  and $\ker H_1=\{0\}$. 
 In the following we use the notation 
\begin{equation}\label{eq:Lem32}
	I_{\Hs_1}+K_1=V_1^*Q_0^{-1}(I_\Hs+K)V_1,
\end{equation}
where  $K_1$ is compact.
Moreover, the block operator matrix representation of the spectral divisor $R:\C\rightarrow\Bs(\Hs_1\oplus\Hs_0)$. 

\begin{lem}\label{1lemext}
Assume that the operators in \eqref{Adef2} satisfy $\ker H\subset \ker P_i$ for all $i=1,\hdots, k-1$. Then the spectral divisor $R$ is equivalent to $C:=V_1^*R V_1$ on $\C\setminus\{0\}$ after extension and $C$ can be written in the form
\begin{equation}\label{Gdef2}
C(\lambda):=\lambda^k+\sum_{i=0}^{k-1} \lambda^iC_i,\quad C_0=(I_{\Hs_1}+K_1)H_1,
\end{equation}
where $C_i$ is compact for $i=0,\hdots,k-1$ and $I_{\Hs_1}+K_1$ is given by \eqref{eq:Lem32}. Moreover, $H_1\in S_p(\Hs_1)$ is a normal  operator with $\ker{H_1}=\{0\}$ and $\sigma (H_1)$ is located on a finite number of rays from the origin.
\end{lem}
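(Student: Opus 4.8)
The plan is to establish the equivalence after extension first, and then read off the structural description of $C$ from the block-matrix computation. For the equivalence, recall from Lemma \ref{1lemsd} that each $R_i$ is compact and, crucially, that $\bigcap_{j=0}^i\ker P_j\subset\ker R_i$; combined with the standing hypothesis $\ker H\subset\ker P_i$ for $i=1,\dots,k-1$ and the fact that $\ker H=\ker P_0$ modulo the invertible factor $I_\Hs+K$ (so $\ker H\subset\ker P_0$ as well, since $P_0=(I_\Hs+K)H$), we get $\Hs_0=\ker H\subset\ker R_i$ for every $i=0,\dots,k-1$. Hence each $R_i$ annihilates $\Hs_0$, which means in the block decomposition $\Hs=\Hs_1\oplus\Hs_0$ the operator $R_i$ has the form $\begin{bmatrix}V_1^*R_iV_1 & V_1^*R_iV_0\\ 0 & 0\end{bmatrix}$, and therefore $R(\lambda)=\begin{bmatrix} C(\lambda) & * \\ 0 & \lambda^k I_{\Hs_0}\end{bmatrix}$ with $C(\lambda)=V_1^*R(\lambda)V_1$.

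Next I would produce the explicit invertible factors realizing the equivalence after extension. On $\C\setminus\{0\}$ the scalar $\lambda^k$ is invertible, so the standard block factorization
\[
\begin{bmatrix} C(\lambda) & * \\ 0 & \lambda^k I_{\Hs_0}\end{bmatrix}
=\begin{bmatrix} I_{\Hs_1} & *\lambda^{-k} \\ 0 & I_{\Hs_0}\end{bmatrix}
\begin{bmatrix} C(\lambda) & 0 \\ 0 & \lambda^k I_{\Hs_0}\end{bmatrix}
\]
exhibits $R$ as equivalent to $C(\lambda)\oplus(\lambda^k I_{\Hs_0})$ on $\C\setminus\{0\}$; absorbing a further diagonal invertible factor $I_{\Hs_1}\oplus(\lambda^{-k}I_{\Hs_0})$ turns the second block into $I_{\Hs_0}$, so $R$ is equivalent to $C\oplus I_{\Hs_0}$ on $\C\setminus\{0\}$, i.e. $R$ is equivalent to $C$ on $\C\setminus\{0\}$ after extension. (One should note that $V_1$ itself is not invertible on $\Hs$, so the phrase "$C=V_1^*RV_1$" is to be read as the compression giving the $(1,1)$-block, and the equivalence is genuinely the after-extension one — this is the only mildly delicate bookkeeping point.)

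It remains to identify the coefficients of $C$. Writing $C(\lambda)=V_1^*R(\lambda)V_1=\lambda^k V_1^*V_1+\sum_{i=0}^{k-1}\lambda^i V_1^*R_iV_1=\lambda^k I_{\Hs_1}+\sum_{i=0}^{k-1}\lambda^i C_i$ with $C_i:=V_1^*R_iV_1$, compactness of $C_i$ is inherited from compactness of $R_i$. For the leading lower coefficient, Lemma \ref{1lemsd} gives $R_0=Q_0^{-1}P_0=Q_0^{-1}(I_\Hs+K)H$, hence using \eqref{H1norm} and \eqref{eq:Lem32},
\[
C_0=V_1^*Q_0^{-1}(I_\Hs+K)H V_1=V_1^*Q_0^{-1}(I_\Hs+K)V_1\,V_1^*HV_1=(I_{\Hs_1}+K_1)H_1,
\]
where the middle step uses $HV_1V_1^*=H\Pi_1=H$ (valid since $H$ vanishes on $\Hs_0$) together with $V_1V_1^*=\Pi_1$. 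Finally, $H_1=V_1^*HV_1$ is the compression of the normal operator $H$ to its range-closure $\Hs_1$, so it is normal with trivial kernel, lies in $S_p(\Hs_1)$, and has spectrum $\sigma(H_1)=\sigma(H)\setminus\{0\}$, which is contained in the same finite union of rays from the origin. The only step requiring genuine care is the verification that $\Hs_0\subset\ker R_i$ for all relevant $i$ — everything else is the block-triangular computation above — and that inclusion is exactly what Lemma \ref{1lemsd} together with the added hypothesis $\ker H\subset\ker P_i$ was arranged to supply.
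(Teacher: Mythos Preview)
Your approach is essentially the paper's, but there is a transposition slip in the block form that you should fix. The inclusion $\Hs_0\subset\ker R_i$ means $R_iV_0=0$, i.e.\ the \emph{second column} of the block matrix of $R_i$ vanishes, not the second row. (Indeed your own entry $V_1^*R_iV_0$ is already zero under your premise, so the matrix you wrote is internally inconsistent.) The correct form is
\[
R(\lambda)=\begin{bmatrix} C(\lambda) & 0\\ \sum_{i=0}^{k-1}\lambda^i D_i & \lambda^k I_{\Hs_0}\end{bmatrix},\qquad D_i:=V_0^*R_iV_1,
\]
which is lower triangular, and the factorization giving equivalence after extension becomes
\[
\begin{bmatrix} C(\lambda) & 0\\ \sum_i\lambda^i D_i & \lambda^k I_{\Hs_0}\end{bmatrix}
=\begin{bmatrix} C(\lambda) & 0\\ 0 & I_{\Hs_0}\end{bmatrix}
\begin{bmatrix} I_{\Hs_1} & 0\\ \sum_i\lambda^i D_i & \lambda^k I_{\Hs_0}\end{bmatrix},
\]
exactly as in the paper. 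This does not affect the validity of your argument (your upper-triangular factorization would work equally well if the matrix were upper triangular), but the orientation matters downstream: Corollary~\ref{eigcor} computes Jordan chains using the off-diagonal block $D_i=V_0^*R_iV_1$, so getting the triangle right is not purely cosmetic. With this correction your proof matches the paper's.
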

\begin{proof}
By the assumption  that $\ker H\subset \ker P_i$ for all $i=1,\hdots, k-1$, it follows from \eqref{1samker} that $\ker H\subset \ker R_i$ for all $i=1,\hdots, k-1$. Hence, the operator function $R$ on $\Hs_1\oplus\Hs_0$ has the representation
\begin{equation}\label{1Rblock}
	R(\lambda):=\begin{bmatrix}
	\lambda^{k}+ \sum_{i=0}^{k-1} \lambda^i V_1^* R_i V_1&\\
	\sum_{i=0}^{k-1} \lambda^i V_0^*R_i V_1&\lambda^k
	\end{bmatrix}=\begin{bmatrix}
	C(\lambda)&\\
	\sum_{i=0}^{k-1} \lambda^i D_i&\lambda^k
	\end{bmatrix}.
\end{equation}
Then the identity
\begin{equation}\label{1projeq}
	\begin{bmatrix}
	C(\lambda)&0\\
	\sum_{i=0}^{k-1} \lambda^i D_i&\lambda^k
	\end{bmatrix}=\begin{bmatrix}
	C(\lambda)&0\\
	0&I_{\Hs_0}
	\end{bmatrix}\begin{bmatrix}
	I_{\Hs_1}&0\\
	\sum_{i=0}^{k-1} \lambda^i D_i&\lambda^k
	\end{bmatrix},
\end{equation}
implies that $C$ is equivalent to $R$ after extension. Since $V_1^* V_1$ is the identity on $\Hs_1$, the representation \eqref{Gdef2} follows from Lemma \ref{1lemsd}.
\end{proof}

\begin{cor}\label{eigcor}
Assume that the operators in \eqref{Adef2} satisfy $\ker H\subset\ker P_i$ for all $i=1,\hdots, k-1$. Then
$\lambda \in \sigma(P)\cap \G\setminus\{0\}$ if and only if $\lambda\in\sigma(C)\setminus \{0\}$. 
Further $\{u_i\}_{i=0}^{m-1}$ is a Jordan chain of $C(\lambda)$ of order $m$ at $\lambda\in \G\setminus\{0\}$ if and only if $\{v_i\}_{i=0}^{m-1}$ is a Jordan chain of $P(\lambda)$ of order $m$, where $D_i$ is given by \eqref{1Rblock} and
\begin{equation}\label{eigcorv}
	v_i=	u_i	-\sum_{j=0}^{i}\frac{(-1)^{j}}{j!} \left(\sum_{l=0}^{k-1}\frac{(k+j-1-l)!}{(k-1-l)!} \lambda^{l-k-j} D_l\right)u_{i-j}.
\end{equation}
\end{cor}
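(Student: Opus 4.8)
The plan is to push everything through holomorphic root functions and the two factorizations that are already available: the spectral-divisor factorization $P=QR$ from \eqref{spdiv}, and the triangular factorization of $R$ produced in the proof of Lemma \ref{1lemext}. I would first settle the spectral statement. By \eqref{spdiv} we have $\sigma(R)=\G\cap\sigma(P)$, and by Lemma \ref{1lemext} the function $R$ is equivalent on $\C\setminus\{0\}$ to $C\oplus I_{\Hs_0}$. Equivalent operator functions have the same spectrum, and $C(\lambda)\oplus I_{\Hs_0}$ is invertible exactly when $C(\lambda)$ is, so $\sigma(C)\setminus\{0\}=\sigma(R)\setminus\{0\}=\bigl(\G\cap\sigma(P)\bigr)\setminus\{0\}$, which is the first assertion.

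For the Jordan chains I would use the standard reformulation: $\{w_i\}_{i=0}^{m-1}$ is a Jordan chain of a bounded holomorphic operator function $T$ at $\lambda_0$ if and only if $w_0\neq0$ and the vector polynomial $w(z)=\sum_{i=0}^{m-1}w_i(z-\lambda_0)^i$ satisfies $T(z)w(z)=O\bigl((z-\lambda_0)^m\bigr)$, where $w$ may be replaced by any holomorphic function with the same first $m$ Taylor coefficients at $\lambda_0$. Fix $\lambda_0=\lambda\in\G\setminus\{0\}$. By \eqref{spdiv}, $P(z)=Q(z)R(z)$ with $Q$ holomorphic and boundedly invertible on $\G$; by \eqref{1projeq}, $R(z)=\bigl(C(z)\oplus I_{\Hs_0}\bigr)F(z)$, where $F(z)$ is the block-lower-triangular operator on $\Hs_1\oplus\Hs_0$ with diagonal entries $I_{\Hs_1}$, $z^kI_{\Hs_0}$ and lower-left entry $\sum_{l=0}^{k-1}z^lD_l$. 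Since $\lambda_0\neq0$, the function $z\mapsto z^{-k}$ is holomorphic near $\lambda_0$, hence so is $F(z)^{-1}$, the block-lower-triangular operator with diagonal entries $I_{\Hs_1}$, $z^{-k}I_{\Hs_0}$ and lower-left entry $-\sum_{l=0}^{k-1}z^{l-k}D_l$.

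Given a vector polynomial $u(z)=\sum_{i=0}^{m-1}u_i(z-\lambda_0)^i$ with values in $\Hs_1$, I would set $v(z):=F(z)^{-1}\bigl(u(z)\oplus0\bigr)=u(z)-\sum_{l=0}^{k-1}z^{l-k}D_lu(z)$, which is holomorphic near $\lambda_0$. From $R(z)=\bigl(C(z)\oplus I_{\Hs_0}\bigr)F(z)$ and $P(z)=Q(z)R(z)$, using the bounded invertibility of $F(z)$ and $Q(z)$ near $\lambda_0$, one obtains $C(z)u(z)=O\bigl((z-\lambda_0)^m\bigr)\iff R(z)v(z)=O\bigl((z-\lambda_0)^m\bigr)\iff P(z)v(z)=O\bigl((z-\lambda_0)^m\bigr)$, since $R(z)v(z)=C(z)u(z)\oplus0$. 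It then remains to identify the first $m$ Taylor coefficients of $v$ at $\lambda_0$ with the vectors in \eqref{eigcorv}; this is the Leibniz rule applied to $z^{l-k}D_lu(z)$ together with the elementary identity $\frac{d^j}{dz^j}z^{l-k}=(-1)^j\frac{(k+j-1-l)!}{(k-1-l)!}z^{l-k-j}$, which is legitimate because $0\le l\le k-1$. As these coefficients are exactly the $v_i$ of \eqref{eigcorv}, and $v_0=0$ if and only if $u_0=0$ (the $\Hs_1$-component of $v_0$ equals $u_0$), combining the reformulation above for $C$ and for $P$ with the displayed chain gives the biconditional: $\{u_i\}_{i=0}^{m-1}$ is a Jordan chain of $C$ at $\lambda$ if and only if $\{v_i\}_{i=0}^{m-1}$, defined by \eqref{eigcorv}, is a Jordan chain of $P$ at $\lambda$.

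I expect the only real obstacle to be this last bookkeeping step — matching the Taylor coefficients of $F(z)^{-1}\bigl(u(z)\oplus0\bigr)$ with the closed form \eqref{eigcorv} via the Leibniz rule and the factorial identity above; everything else is the routine transfer of root functions along the two factorizations. The single point demanding care throughout is that the argument is local at a point $\lambda_0\neq0$, which is exactly what keeps $F(z)^{-1}$ holomorphic there.
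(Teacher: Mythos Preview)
Your argument is correct and follows the same two-step route as the paper's proof: pass from $P$ to $R$ via the spectral-divisor factorization, then from $R$ to $C$ via the triangular factorization \eqref{1projeq}. The paper dispatches the first step by citing an external result for Jordan chains under equivalence and leaves the second step (``it can then be shown'') as an implicit computation, whereas you carry both through explicitly via holomorphic root functions and the Leibniz rule. Your version is more self-contained; the paper's is terser but relies on outside machinery. The content is the same.
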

\begin{proof}
Since $R$ is a spectral divisor of $P$ on $\G$, it follows that $\lambda \in \sigma(P)\cap \G\setminus\{0\}$ if and only if $\lambda\in\sigma(R)\setminus\{0\}$. The correspondence of the Jordan chains follows from \cite[Proposition 1.2]{MR1155350}. Using the equivalence \eqref{1projeq} it can then be shown that $\{u_i\}_{i=0}^{m-1}$ is a Jordan chain of $C(\lambda)$ if and only if $\{v_i\}_{i=0}^{m-1}$ is a Jordan chain of $R(\lambda)$ where
\[
	v_i=\begin{bmatrix}
	u_i\\
	-\sum_{j=0}^{i}\dfrac{(-1)^{j}}{j!} \left(\sum_{l=0}^{k-1}\dfrac{(k+j-1-l)!}{(k-1-l)!} \lambda^{l-k-j} D_l\right)u_{i-j}
	\end{bmatrix},
\]
which is \eqref{eigcorv} represented in $\Hs_1\oplus\Hs_0$.
\end{proof}

Lemma \ref{1lemext} and Corollary \ref{eigcor} imply that if the polynomial \eqref{Adef2} has a spectral divisor, then we can obtain its spectral properties from $C$, which has suitable properties. 

\begin{rem}
In the important special case  $k=1$, the operator $Z:=-R_0$ is a spectral root of $P$. Then the operator function $P$ shares spectrum with the operator $Z$ on $\G\setminus\{0\}$, which  simplifies the expression of the Jordan chains \eqref{eigcorv} to
\begin{equation*}\label{eigcorvsimp}
	v_i=	u_i	-\sum_{j=0}^{i}\frac{(-1)^{j}}{\lambda^{j+1}}  D_0u_{i-j}.
\end{equation*}
\end{rem}

\begin{lem}\label{redker}
Assume that the operators in \eqref{Adef2} satisfy $\ker H\subset \ker P_i\cap \ker \Kk$ for all $i=1,\hdots, k-1$. Let $C$ denote the polynomial \eqref{Gdef2}. Then, $I_{\Hs_1}+V_1^*KV_1$ is invertible if and only if $I_{\Hs_1}+K_1$ is invertible.
\end{lem}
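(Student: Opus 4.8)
The plan is to use the additional hypothesis $\ker H\subset\ker\Kk$ to show that $Q_0$ is block lower triangular with respect to the decomposition $\Hs=\Hs_1\oplus\Hs_0$, with the identity $I_{\Hs_0}$ in its $(2,2)$-corner, and then to read off the claim directly from the formula \eqref{eq:Lem32} for $I_{\Hs_1}+K_1$.

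First I would observe that $\ker H\subset\ker R_i$ for every $i=0,\dots,k-1$. Indeed $\ker H\subset\ker P_0$ because $P_0=(I_\Hs+K)H$, and $\ker H\subset\ker P_i$ for $i=1,\dots,k-1$ by assumption, so the inclusion \eqref{1samker} of Lemma \ref{1lemsd} gives $\ker H\subset\bigcap_{j=0}^{i}\ker P_j\subset\ker R_i$.

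Next I would look at the last line of \eqref{Cprops}, which reads $Q_0=I_\Hs+\Kk-\sum_{i=1}^{k}Q_iR_{k-i}$, and set $G:=\Kk-\sum_{i=1}^{k}Q_iR_{k-i}$, so that $Q_0=I_\Hs+G$. As $i$ ranges over $1,\dots,k$ the index $k-i$ ranges over $0,\dots,k-1$, hence each $R_{k-i}$ annihilates $\Hs_0=\ker H$ by the first step; combined with the hypothesis $\ker H\subset\ker\Kk$ this gives $GV_0=0$. Using the block representation \eqref{eq:BlockRep} together with $V_1^*V_0=0$, $V_0^*V_0=I_{\Hs_0}$, $V_1^*V_1=I_{\Hs_1}$ it then follows that
\[
	Q_0=\begin{bmatrix} I_{\Hs_1}+V_1^*GV_1 & 0\\ V_0^*GV_1 & I_{\Hs_0}\end{bmatrix}.
\]
Since $0\in\G$ and $\sigma(Q)\subset\C\setminus\overline{\G}$, the operator $Q_0$ is invertible, and triangularity with invertible $(2,2)$-entry then forces $I_{\Hs_1}+V_1^*GV_1$ to be invertible, with
\[
	V_1^*Q_0^{-1}V_1=(I_{\Hs_1}+V_1^*GV_1)^{-1},\qquad V_1^*Q_0^{-1}V_0=0.
\]

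Finally, I would insert $I_\Hs=V_1V_1^*+V_0V_0^*$ between $Q_0^{-1}$ and $I_\Hs+K$ in the defining relation $I_{\Hs_1}+K_1=V_1^*Q_0^{-1}(I_\Hs+K)V_1$ of \eqref{eq:Lem32}; using $V_1^*Q_0^{-1}V_0=0$ and $V_1^*(I_\Hs+K)V_1=I_{\Hs_1}+V_1^*KV_1$ the expression collapses to
\[
	I_{\Hs_1}+K_1=(I_{\Hs_1}+V_1^*GV_1)^{-1}\bigl(I_{\Hs_1}+V_1^*KV_1\bigr).
\]
As the left factor is invertible, $I_{\Hs_1}+K_1$ is invertible if and only if $I_{\Hs_1}+V_1^*KV_1$ is, which is the assertion. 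I expect the only genuinely delicate point to be the identity $GV_0=0$: this is exactly where the additional assumption $\ker H\subset\ker\Kk$ is needed, the remaining summands of $G$ being handled by the inclusions $\ker H\subset\ker R_i$ from the first step; everything else is a routine manipulation of $2\times2$ block operators.
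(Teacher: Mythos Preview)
Your proof is correct and follows essentially the same route as the paper: both establish that the hypothesis $\ker H\subset\ker\Kk$ (together with $\ker H\subset\ker R_i$) forces $Q_0$ to be block lower triangular with $I_{\Hs_0}$ in the $(2,2)$-corner, then read off the relation between $I_{\Hs_1}+K_1$ and $I_{\Hs_1}+V_1^*KV_1$ via the invertible $(1,1)$-block of $Q_0^{-1}$. Your computation is in fact slightly cleaner: you work directly with the defining relation \eqref{eq:Lem32}, whereas the paper passes through $C_0=(I_{\Hs_1}+K_1)H_1$ and so implicitly needs the density of $\Ran H_1$ to cancel $H_1$; you also spell out both directions of the equivalence, while the paper only states one explicitly.
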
 
\begin{proof}
From the representation of $Q_0$ in \eqref{Cprops} and \eqref{1Rblock}  it follows that $Q_0\in \Bs(\Hs_1\oplus\Hs_0)$ can be written in the form
\[
	Q_0=\begin{bmatrix}
	I_{\Hs_1}-K^{(1)}&\\
	-K^{(2)}&I_{\Hs_0}
	\end{bmatrix},
\]
where $K^{(1)}$ and $K^{(2)}$ are compact operators. Since $Q_0$ is invertible by definition, $I_{\Hs_1}-K^{(1)}$ is invertible and as a consequence
\[
	Q_0^{-1}=\begin{bmatrix}
	(I_{\Hs_1}-K^{(1)})^{-1}&\\
	K^{(2)}(I_{\Hs_1}-K^{(1)})^{-1}&I_{\Hs_0}
	\end{bmatrix}.
\]
The operator $P_0\in\Bs(\Hs_1\oplus\Hs_0)$ has the representation
\[
	P_0=(I_{\Hs}+K)H=\begin{bmatrix}
	(I_{\Hs_1}+V_1^*KV_1)H_1 &0\\
	V_0^*KV_1H_1&0 
	\end{bmatrix} 
\]
and from the block operator matrix representations of $Q_0^{-1}$ and of $P_0$ we obtain 
\[
	C_0=V_1^*Q_0^{-1}P_0V_1=(I_{\Hs_1}-K^{(1)})^{-1}(I_{\Hs_1}+V_1^*KV_1)H_1=(I_{\Hs_1}+K_1)H_1,
\]
where $K_1$ is compact. If $I_{\Hs_1}+V_1^*KV_1$ is invertible, then $I_{\Hs_1}+K_1$ is clearly invertible.
\end{proof}

\begin{lem}\label{2schurlem}
Assume the operators in \eqref{Adef2} satisfies $\ker H\subset \ker P_i$ for all $i=1,\hdots, k-1$ and $K=0$. Let $C$ denote the operator polynomial \eqref{Gdef2}. If  $\Hs_0=\emptyset$ or $I_{\Hs_0}+V_0^*\Kk V_0$ is invertible then, $I_{\Hs_1}+K_1$ is invertible.
\end{lem}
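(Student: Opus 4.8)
The plan is to determine invertibility of $I_{\Hs_1}+K_1$ by examining the block-matrix form of $Q_0$ on $\Hs=\Hs_1\oplus\Hs_0$ together with a Schur-complement factorization. The case $\Hs_0=\emptyset$ is immediate: then $\Hs_1=\Hs$, $V_1=I_\Hs$, and since $K=0$ the definition \eqref{eq:Lem32} gives $I_{\Hs_1}+K_1=Q_0^{-1}$, which is invertible because $0\notin\sigma(Q)$. So assume $\Hs_0\neq\emptyset$. Since $K=0$ we have $P_0=H$, and \eqref{eq:Lem32} simplifies to $I_{\Hs_1}+K_1=V_1^*Q_0^{-1}V_1$, that is, $I_{\Hs_1}+K_1$ is the $(1,1)$-block (the compression to $\Hs_1$) of $Q_0^{-1}$. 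Hence it suffices to control that block.

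First I would show that every $R_i$, $i=0,\dots,k-1$, vanishes on $\Hs_0=\ker H$, equivalently $R_iV_0=0$. For $1\le i\le k-1$ this follows from the hypothesis $\ker H\subseteq\ker P_i$ together with $\ker H=\ker P_0$ (recall $P_0=H$) and the inclusion $\bigcap_{j=0}^i\ker P_j\subseteq\ker R_i$ supplied by \eqref{1samker}; for $i=0$ it follows from $R_0=Q_0^{-1}P_0=Q_0^{-1}H$ and $H|_{\Hs_0}=0$. Consequently, in the last line of \eqref{Cprops}, $Q_0=I_\Hs+\Kk-\sum_{i=1}^kQ_iR_{k-i}$, the correction term annihilates $\Hs_0$, since $k-i$ runs through $0,\dots,k-1$ and hence $\sum_{i=1}^kQ_iR_{k-i}V_0=0$. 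Therefore $Q_0V_0=V_0+\Kk V_0$, so the second column of the block matrix of $Q_0$ has top entry $V_1^*\Kk V_0$ and bottom entry $I_{\Hs_0}+V_0^*\Kk V_0$; in particular the $(2,2)$-block of $Q_0$ equals exactly $d:=I_{\Hs_0}+V_0^*\Kk V_0$.

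By hypothesis $d$ is invertible, so I would use the Frobenius--Schur factorization
\[
Q_0=\begin{bmatrix} I_{\Hs_1} & (V_1^*\Kk V_0)\,d^{-1}\\ 0 & I_{\Hs_0}\end{bmatrix}
\begin{bmatrix} S & 0\\ 0 & d\end{bmatrix}
\begin{bmatrix} I_{\Hs_1} & 0\\ d^{-1}(V_0^*Q_0V_1) & I_{\Hs_0}\end{bmatrix},
\]
where $S:=V_1^*Q_0V_1-(V_1^*\Kk V_0)\,d^{-1}(V_0^*Q_0V_1)$ is the Schur complement of $d$ in $Q_0$. The two outer factors are triangular with identity diagonal blocks, hence invertible, so invertibility of $Q_0$ (which holds since $0\notin\sigma(Q)$) forces $S$ to be invertible; moreover the $(1,1)$-block of $Q_0^{-1}$ read off from this factorization is $S^{-1}$. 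Since $I_{\Hs_1}+K_1=V_1^*Q_0^{-1}V_1$ is precisely that block, we conclude that $I_{\Hs_1}+K_1=S^{-1}$ is invertible.

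The argument is essentially bookkeeping with block operator matrices; the one point on which everything hinges, and the only step needing genuine care, is the claim that the correction $\sum_{i=1}^kQ_iR_{k-i}$ annihilates $\Hs_0$. This is what forces the $(2,2)$-block of $Q_0$ to coincide with the hypothesized operator $I_{\Hs_0}+V_0^*\Kk V_0$, and thereby makes the Schur-complement step applicable.
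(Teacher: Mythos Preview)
Your proof is correct and follows essentially the same route as the paper: both compute the $(2,2)$-block of $Q_0$ as $I_{\Hs_0}+V_0^*\Kk V_0$ by showing that $R_iV_0=0$ for $i=0,\dots,k-1$ (via the hypothesis $\ker H\subset\ker P_i$ and \eqref{1samker}), then invoke the Schur complement of that block to identify $V_1^*Q_0^{-1}V_1=S^{-1}$, which equals $I_{\Hs_1}+K_1$ by \eqref{eq:Lem32} with $K=0$. The only cosmetic difference is that you write out the full Frobenius--Schur factorization, whereas the paper simply quotes the standard fact that the $(1,1)$-block of $Q_0^{-1}$ is the inverse of the Schur complement.
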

\begin{proof}
If $\Hs_0=\emptyset$, then $I_{\Hs_1}+K_1=Q_0^{-1}$, which is an invertible operator.

If $\Hs_0\neq\emptyset$, then from \eqref{Adef2}, \eqref{Cprops}, and the assumptions $\ker H\subset \ker P_i$, $i=1,\hdots, k-1$ it follows that
\[
	V_0^*Q_0V_0=V_0^*\left(I_{\Hs}+\Kk -\sum_{i=1}^{k}Q_iR_{k-i}\right)V_0=I_{\Hs_0}+V_0^*\Kk V_0,
\]
which is invertible by assumption. The operator
\[
	S:=V_1^*Q_0V_1-V_1^*Q_0V_0(V_0^*Q_0V_0)^{-1}V_0^*Q_0V_1,
\]
is a Schur complement of  $Q_0\in\Bs(\Hs_1\oplus\Hs_0)$ and $S^{-1}$ is bounded since $Q_0$ and $V_0^*Q_0V_0$ are invertible. Hence, $Q_0^{-1}$ has for some bounded operators $X$, $Y$, and $Z$ a block respresentaion in the form
\[
	Q_0^{-1}=\begin{bmatrix}
	S^{-1}&X\\
	Y&Z
	\end{bmatrix}=\begin{bmatrix}
	V_1^*Q_0^{-1}V_1&V_1^*Q_0^{-1}V_0\\
	V_0^*Q_0^{-1}V_1&V_0^*Q_0^{-1}V_0\\
	\end{bmatrix}.
\]
From the assumption $K=0$ and \eqref{eq:Lem32} follows then that $V_1^*Q_0^{-1}V_1=I_{\Hs_1}+K_1=S^{-1}$ is invertible. 
\end{proof}

\begin{lem}
Assume that $I_{\Hs_1}+K_1$ is invertible. Then
 $0\in \sigma_c(C)$ if and only if $\dim \Hs_1=\infty$.
\end{lem}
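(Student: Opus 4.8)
The plan is to reduce the statement about $C$ at $\lambda=0$ to a statement about the normal operator $H_1$, using the factorization $C_0=(I_{\Hs_1}+K_1)H_1$ from \eqref{Gdef2} together with invertibility of $I_{\Hs_1}+K_1$. First I would observe that $C(0)=C_0=(I_{\Hs_1}+K_1)H_1$, so that since $I_{\Hs_1}+K_1$ is a bounded invertible operator, the operators $C(0)$ and $H_1$ have the same range, the same kernel, and, crucially, $0\in\sigma_c(C(0))$ if and only if $0\in\sigma_c(H_1)$; indeed left-multiplication by a bounded invertible operator preserves whether the range is closed and whether it is dense, and whether the kernel is trivial. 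So the whole question becomes: for the normal operator $H_1\in S_p(\Hs_1)$ with $\ker H_1=\{0\}$, when is $0$ in the continuous spectrum?

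The second step is purely a fact about the compact normal operator $H_1$. Since $H_1\in S_p(\Hs_1)\subset\Ks(\Hs_1)$ is normal with trivial kernel, the spectral theorem for compact normal operators gives an orthonormal basis $\{e_n\}$ of $\Hs_1$ of eigenvectors with eigenvalues $\mu_n\to 0$, and $\mu_n\neq 0$ for all $n$ because $\ker H_1=\{0\}$. If $\dim\Hs_1<\infty$, then $H_1$ is an invertible finite-dimensional operator (finitely many nonzero eigenvalues, none zero), hence $0\notin\sigma(H_1)$ and in particular $0\notin\sigma_c(H_1)$. If $\dim\Hs_1=\infty$, then there are infinitely many eigenvalues accumulating at $0$, so $0\in\sigma(H_1)$; since $\ker H_1=\{0\}$ we have $0\notin\sigma_p(H_1)$, and because $H_1$ is normal its residual spectrum is empty (for a normal operator, $\Ran(H_1-\lambda)^{\perp}=\ker(H_1-\lambda)^*=\ker(H_1-\lambda)$), so $\sigma_r(H_1)=\emptyset$ and therefore $0\in\sigma_c(H_1)$. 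Combining with the first step yields $0\in\sigma_c(C)$ iff $\dim\Hs_1=\infty$.

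I should be slightly careful about one point: $\sigma_c(C)$ is the continuous spectrum of the operator \emph{function} $C$ at the point $0$, i.e. the statement $0\in\sigma(C(0))$ refined to continuous spectrum. Since $C(0)=C_0$ this is unambiguous, and the argument above is exactly about the linear operator $C_0$. The only real content is the reduction lemma ``multiplication by a bounded invertible operator on the left does not change which part of the spectrum $0$ belongs to,'' which is standard: if $C_0=BH_1$ with $B$ bounded invertible then $\ker C_0=\ker H_1$, $\Ran C_0=B(\Ran H_1)$ is closed iff $\Ran H_1$ is closed, and dense iff $\Ran H_1$ is dense, so the classification into point/continuous/residual spectrum at $0$ is identical for $C_0$ and $H_1$.

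The main obstacle, such as it is, is bookkeeping rather than mathematics: making sure the factorization $C_0=(I_{\Hs_1}+K_1)H_1$ with $I_{\Hs_1}+K_1$ invertible is genuinely available here. But this is precisely the content of Lemma \ref{1lemext} (the form of $C_0$) together with the hypothesis of the present lemma (invertibility of $I_{\Hs_1}+K_1$), so nothing new is needed; the earlier lemmas \ref{redker} and \ref{2schurlem} are exactly the tools that in applications verify that hypothesis. Hence the proof is short: reduce to $H_1$, then invoke the spectral theorem for compact normal operators.
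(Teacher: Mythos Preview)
Your proposal is correct and follows essentially the same approach as the paper. Both arguments hinge on the factorization $C(0)=(I_{\Hs_1}+K_1)H_1$ with $I_{\Hs_1}+K_1$ invertible and on the fact that the normal operator $H_1$ with trivial kernel also has $\ker H_1^*=\{0\}$; the paper phrases the last step by computing $C^*(0)=H_1^*(I_{\Hs_1}+K_1^*)$ directly, whereas you abstract it into the observation that left multiplication by a bounded invertible operator preserves the point/continuous/residual classification at $0$, but the mathematical content is identical.
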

\begin{proof}
For $\dim \Hs_1<\infty$ follows $0\notin\sigma(C)$ trivially. Assume that $\dim \Hs_1=\infty$, then $0\in\sigma(C)\setminus\sigma_p(C)$ and we will therefore show that $0\notin \sigma_r(C)$. It can be seen that
\[
	C^*(0)=H_1^*(I_{\Hs_1}+K_1^*).
\]
Then $I_{\Hs_1}+K_1^*$ is invertible and since $H_1$ is normal with trivial kernel the operator $H_1^*$ has a trivial kernel. This implies that $C^*(0)$ has a trivial kernel and thus $0\notin \sigma_r(C)$.
\end{proof}
\begin{prop}\label{1propeq}
Let $P$ be defined as in \eqref{Adef2} and assume that $\ker H\subset \ker P_i$ for all $i=1,\hdots, k-1$. Moreover, let  $R$  denote the spectral divisor of order $k$ on some $\G\subset\C$, with $0\in \G$. Define $\Hs_0:=\ker H$, $\Hs_1:=\Hs\bot\Hs_0$, and $V_i$ as in \eqref{1partis}. Let $C=V_1^*RV_1$ denote the operator polynomial \eqref{Gdef2} and assume that $I_{\Hs_1}+K_1$ is invertible. 

If $\dim \Hs<\infty$ and $\Hs_0=\emptyset$ then $\sigma(P)\cap \G=\sigma(C)$.
If $\dim \Hs=\infty$ or $\Hs_0\neq \emptyset$ then $\sigma(P)\cap \G=\{0\}\cup\sigma(C)$. The Jordan chains of order $m$ corresponding to the eigenvalues in $\sigma(P)\cap \G\setminus\{0\}$ are given by \eqref{eigcorv}, where $\{u_i\}_{i=0}^{m-1}$ gives the corresponding Jordan chain to $C$. Moreover, the eigenvectors of $P(0)$ form a basis in $\Hs_0$ and the Jordan chains are of length $1$. 
\end{prop}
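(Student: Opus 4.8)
The plan is to assemble Proposition \ref{1propeq} from the machinery already built in this section, treating the spectral equality and the Jordan-chain correspondence as essentially done and focusing the remaining work on the behavior at $\lambda=0$. First I would invoke Lemma \ref{1lemext}: under the hypothesis $\ker H\subset\ker P_i$, $R$ is equivalent to $C$ on $\C\setminus\{0\}$ after extension, hence $\sigma(R)\setminus\{0\}=\sigma(C)\setminus\{0\}$, and since $R$ is a spectral divisor of $P$ on $\G$ we get $\sigma(P)\cap\G\setminus\{0\}=\sigma(C)\setminus\{0\}$. The Jordan-chain statement then comes directly from Corollary \ref{eigcor}, giving \eqref{eigcorv}. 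So the crux is to determine whether $0\in\sigma(P)\cap\G$ and, when it is, to describe the eigenstructure of $P(0)$.

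Next I would handle the point $0$. If $\dim\Hs<\infty$ and $\Hs_0=\emptyset$, then $H=H_1$ has trivial kernel on all of $\Hs$, and since $I_{\Hs_1}+K_1$ is invertible we have $C_0=(I_{\Hs_1}+K_1)H_1$ invertible, so $0\notin\sigma(C)$; combined with the previous paragraph this gives $\sigma(P)\cap\G=\sigma(C)$. In the remaining case — $\dim\Hs=\infty$ or $\Hs_0\neq\emptyset$ — I claim $0\in\sigma(P)$. Indeed $P(0)=P_0=(I_\Hs+K)H$, and $H$ has nontrivial kernel exactly when $\Hs_0=\ker H\neq\emptyset$, while if $\dim\Hs=\infty$ then (by Lemma \ref{1lemsd} the invertibility of $I_\Hs+K$ is equivalent to that of $I_{\Hs_1}+\widehat K$, but more simply) $H\in S_p(\Hs)$ is compact, so $0\in\sigma(H)$ and hence $0\in\sigma(P_0)$. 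Either way $0\in\sigma(P)\cap\G$, giving $\sigma(P)\cap\G=\{0\}\cup\sigma(C)$.

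Finally I would describe the eigenvectors at $0$. Since $P(0)u=(I_\Hs+K)Hu=0$ and $I_\Hs+K$ is invertible (this is the content built into Lemma \ref{1lemsd}, and the invertibility of $I_{\Hs_1}+K_1$ together with the block structure forces it — alternatively it is part of the standing hypothesis that the spectral divisor exists with the stated normalization), we get $\ker P(0)=\ker H=\Hs_0$. Because $H$ is normal, $\Hs=\ker H\oplus\ov{\Ran H}$ and $H$ restricted to $\Hs_1$ has trivial kernel, so any orthonormal basis of $\Hs_0$ is a basis of $\ker P(0)$; this is the asserted basis. It remains to show the Jordan chains at $0$ have length $1$, i.e. there is no associated vector: if $u_0\in\Hs_0\setminus\{0\}$ and $u_1$ satisfied $P(0)u_1+P'(0)u_0=0$, then using $P'(0)=P_1$ and $\ker H\subset\ker P_1$ we get $P'(0)u_0=P_1u_0=0$, hence $P(0)u_1=(I_\Hs+K)Hu_1=0$, which only says $u_1\in\Hs_0$ and does not obstruct — so I need the sharper fact that $u_1$ can be taken zero, i.e. the chain is not genuinely extended. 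The clean way is: a Jordan chain $\{u_0,\dots,u_{m-1}\}$ at $0$ with $u_0\neq0$ requires $\sum_{i=0}^j\frac1{i!}P^{(i)}(0)u_{j-i}=0$; the $\ker H\subset\ker P_i$ hypothesis kills every term with a derivative hitting a vector in $\Hs_0$, and an induction shows all $u_i\in\Hs_0$, at which point the chain contributes nothing beyond $u_0$ to the generalized eigenspace — more precisely the canonical system of Jordan chains can be chosen of length $1$. I expect this last point — making rigorous that the Jordan chains at $0$ are genuinely of length $1$ rather than merely reducible — to be the main obstacle, and I would resolve it by the induction just sketched, using $\ker H\subset\ker P_i$ for $i=1,\dots,k-1$ and the fact (from the spectral divisor structure, $M\ge k$, and $P_k=I_\Hs+\Kk$ being a perturbation of the identity) that the higher coefficients cannot reintroduce a coupling out of $\Hs_0$ at $\lambda=0$.
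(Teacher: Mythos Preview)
Your overall architecture matches the paper's: spectral equality away from $0$ via Lemma \ref{1lemext} and the spectral divisor, Jordan chains via Corollary \ref{eigcor}, and then separate analysis at $\lambda=0$. But there are two genuine gaps in your treatment of $\lambda=0$.

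\textbf{First, invertibility of $I_\Hs+K$ is not available.} You write ``$\ker P(0)=\ker H=\Hs_0$'' because ``$I_\Hs+K$ is invertible,'' and you justify this either by ``the block structure forces it'' or by ``standing hypothesis.'' Neither holds: the only hypothesis is that $I_{\Hs_1}+K_1$ is invertible, and $I_{\Hs_1}+K_1=V_1^*Q_0^{-1}(I_\Hs+K)V_1$ is merely the $(1,1)$ block of $Q_0^{-1}(I_\Hs+K)$, whose invertibility says nothing about $I_\Hs+K$ on all of $\Hs$. The paper does not claim $I_\Hs+K$ is invertible; instead, for $u\in\Hs_1$ with $(I_\Hs+K)Hu=0$ it uses $V_1H_1u=Hu$ and the identity \eqref{eq:Lem32} to get $(I_{\Hs_1}+K_1)H_1u=V_1^*Q_0^{-1}(I_\Hs+K)Hu=0$, and \emph{then} invokes invertibility of $I_{\Hs_1}+K_1$ and $\ker H_1=\{0\}$ to conclude $u=0$. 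You need this projection-through-$Q_0^{-1}$ step.

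\textbf{Second, the Jordan-chain argument at $0$ fails when $k=1$.} Your induction uses ``$\ker H\subset\ker P_1$'' to kill $P_1u_0$, but that inclusion is only assumed for $i=1,\dots,k-1$, which is vacuous when $k=1$. In that case $P_1=I_\Hs+\Kk$ and there is no reason $P_1v_0=0$ for $v_0\in\Hs_0$. The paper splits: for $k>1$ your reasoning works; for $k=1$ it uses the divisor relation $P_1=Q_0+Q_1R_0$ from \eqref{Cprops} together with $R_0v_0=0$ (since $\ker H\subset\ker R_0$) to rewrite the chain equation as $(I_\Hs+K)V_1H_1u_1+Q_0v_0=0$, and then applies $V_1^*Q_0^{-1}$ to reach $(I_{\Hs_1}+K_1)H_1u_1=0$, forcing $u_1\in\Hs_0$. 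Your closing remark about ``higher coefficients cannot reintroduce a coupling'' is not a proof of this; the spectral-divisor identity for $P_1$ is what makes the $k=1$ case go through.
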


\begin{proof}
Since $R$ is a spectral divisor of $P$, Lemma \ref{1lemext} implies that $C$ is equivalent to $R$ after extension on $\Gamma\setminus\{0\}$.  For the point zero it follows by definition that any vector in $w\in \Hs_0$ is an eigenvector of $P(0)$. 
Hence, it is sufficient to show that $w$ is not an associated vector of $P(0)$ if $w\notin \Hs_0$. Assume $v\in\Hs_0$, $u\in\Hs_1$, and that $w:=u+v$ is an eigenvector of $P(0)$. Then
\begin{equation}\label{eq:P0eig}
	0=P(0)w=(I_\Hs+K)H u=0.
\end{equation}
From \eqref{eq:Lem32} follows
\[
	(I_{\Hs_1}+K_1)H_1u=V_1^*Q_0^{-1}(I_{\Hs_1}+K)V_1H_1u,
\]
where $V_1H_1u=Hu$ since $u\in \Hs_1$. Hence, the invertibility of $I_{\Hs_1}+K_1$, 
$\text{Ker}\, H_1=\{0\}$, and \eqref{eq:P0eig} imply that $u=0$ and hence $w\in\Hs_0$. We will now show that all Jordan chains are of length one. Assume $\{u_i+v_i\}_{i=0}^{1}$ is a Jordan chain of length two of $P(0)$, where $u_0,u_1\in\Hs_1$ and $v_0,v_1\in\Hs_2$. Then $u_0+v_0$ is an eigenvector and thus $u_0=0$. Hence, from definition of a Jordan chain
\[
	P(0)(u_1+v_1)+P'(0)v_0=(I_\Hs+K)H u_1+P_1v_0=0.
\]
If $k>1$ then $\ker H\subset\ker P_1$ and thus $P_1v_0=0$, which implies that $u_1+v_1$ is an eigenvector of $P(0)$. If $k=1$ then $P_1=I_\Hs+\Kk $ and \eqref{Cprops} implies
\begin{equation}\label{eq:assvec}
	(I_\Hs+K)H u_1+\left(Q_0+Q_1R_{0}\right)v_0=(I_\Hs+K)V_1H_1 u_1+Q_0v_0=0. 
\end{equation}
By multiplying \eqref{eq:assvec} with $V_1^*Q_0^{-1}$ we conclude that $(I_{\Hs_1}+K_1)H_1u_1=0$ and thus $u_1=0$, which yields that $u_1+v_1$ is an eigenvector of $P(0)$. 
\end{proof}

We are now ready to prove the main results of minimality and completeness of the set of eigenvectors and associated vectors, corresponding to a branch of eigenvalues of $P$ as well as accumulation of eigenvalues to the origin.

\begin{thm}\label{main}
Let $P$ be defined as \eqref{Adef2} with $k=1$ and let $-R_0$ denote the spectral root on some open set $\G\subset\C$ with $0\in \G$. Define $\Hs_0:=\ker H$, $\Hs_1:=\Hs\bot\Hs_0$, and $V_i$ as in \eqref{1partis}. Assume that the operator $I_{\Hs_1}+K_1$ defined in \eqref{eq:Lem32} is invertible and that the spectra of $H$ is located on a finite number of rays from the origin. 

Then, the set of eigenvectors and associated vectors  corresponding to the eigenvalues of $P$ in $\G$ are complete and minimal in $\Hs$. 
\end{thm}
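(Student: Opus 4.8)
The plan is to reduce everything to the linear operator $Z := -R_0 = -(I_{\Hs}+\widehat K)H$, which by Proposition \ref{1propeq} (with $k=1$) shares spectrum and Jordan structure with $P$ on $\G\setminus\{0\}$, while the point $0$ contributes only length-one chains whose eigenvectors span $\Hs_0$. Thus the completeness and minimality of the eigensystem of $P$ in $\G$ is equivalent to the corresponding statement for $Z$ on $\Hs_1$ together with the trivial contribution from $\Hs_0$. On $\Hs_1$, Lemma \ref{1lemext} and Lemma \ref{1lemsd} give $C(\lambda) = \lambda I_{\Hs_1} - (-C_0)$ with $-C_0 = -(I_{\Hs_1}+K_1)H_1$, where $I_{\Hs_1}+K_1$ is invertible by hypothesis and $H_1 \in S_p(\Hs_1)$ is normal with trivial kernel and spectrum on finitely many rays from the origin. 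So the real content is: the eigenvectors and associated vectors of the bounded operator $Z_1 := (I_{\Hs_1}+K_1)H_1$ are complete and minimal in $\Hs_1$.

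The first step is to establish minimality. Since $I_{\Hs_1}+K_1$ is invertible, $Z_1$ is a product of an invertible operator and the injective normal Schatten-class operator $H_1$; in particular $Z_1$ is injective with dense range, $0$ is not an eigenvalue, and $Z_1 \in S_p(\Hs_1)$ so its nonzero spectrum is a discrete set of eigenvalues of finite algebraic multiplicity accumulating only at $0$. Minimality of the system of root vectors (eigenvectors and associated vectors) is automatic for an operator whose nonzero spectrum consists of isolated eigenvalues of finite algebraic multiplicity: each root subspace is the range of the corresponding Riesz spectral projection, and distinct Riesz projections annihilate each other's ranges, so no root vector lies in the closed span of the others. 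I would state this as a standard fact (it is, e.g., the minimality half of the Keldysh-type theory), the key point being that invertibility of $I_{\Hs_1}+K_1$ prevents $0$ from being an eigenvalue and hence prevents any degeneration of the root subspaces.

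The second and main step is completeness. Here I would invoke a Keldysh-type completeness theorem for compact operators of the form $Z_1 = (I+K_1)H_1$ with $H_1$ normal in $S_p$, $\ker H_1 = \{0\}$, and $K_1$ compact: the root vectors are complete provided the spectrum of $H_1$ (equivalently, its "nonreal-part" spectral sectors) lies in finitely many rays from the origin, which controls the growth of the resolvent $(I - \mu H_1)^{-1}$ along rays and lets one apply a Phragmén–Lindelöf argument to the entire function $\mu \mapsto \langle (I-\mu Z_1)^{-1}x, y\rangle$ for $x$ orthogonal to all root vectors. Concretely, the ray condition guarantees that outside a finite union of sectors the resolvent of $H_1$ is $O(1)$, and $S_p$-membership gives the requisite order bound in the bad sectors, so the standard argument (Gohberg–Krein, Chapter V; or \cite{MR1155350}-style reductions) forces $y = 0$. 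This is exactly the place the hypothesis "$\sigma(H)$ on finitely many rays" is used in an essential way, and it is the main obstacle: one must carefully transfer the ray condition from $H$ to $H_1$ (done via \eqref{H1norm}), then from $H_1$ to $Z_1 = (I_{\Hs_1}+K_1)H_1$ using that $K_1$ is compact (a compact perturbation does not destroy completeness when the "unperturbed" sectors are thin), and finally transfer completeness of $Z_1$'s root system in $\Hs_1$ back to $P$'s root system in $\Hs$ by adjoining the basis of $\Hs_0$ supplied by Proposition \ref{1propeq} and using the equivalence-after-extension from Lemma \ref{1lemext} to match Jordan chains via \eqref{eigcorv}. Assembling these pieces — minimality from discreteness of the spectrum, completeness from the Keldysh/ray argument, and the orthogonal-sum bookkeeping from Proposition \ref{1propeq} — yields that the eigenvectors and associated vectors of $P$ corresponding to eigenvalues in $\G$ are complete and minimal in $\Hs$.
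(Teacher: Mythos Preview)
Your proposal is essentially the same strategy as the paper's: reduce to the linear operator $-C_0=(I_{\Hs_1}+K_1)H_1$ on $\Hs_1$ via Lemma~\ref{1lemext} and Proposition~\ref{1propeq}, invoke a Keldysh-type completeness theorem (the paper cites \cite[Theorem~4.2]{ASM} for exactly the operator class $(I+\text{compact})\cdot(\text{normal }S_p,\ \ker=\{0\},\ \text{rays})$), and handle the eigenvalue $0$ by adjoining a basis of $\Hs_0$. The completeness half of your sketch matches the paper line for line.

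The one place you diverge is minimality. The paper does \emph{not} argue minimality on $\Hs_1$ and then transfer; it invokes \cite[Theorem~22.13\,a]{ASM} directly for the polynomial $P$ (using only that $P_0$ is compact and $R$ is a spectral divisor), obtaining minimality of the root system at nonzero eigenvalues in $\Hs$ in one stroke, and then appends the $\Hs_0$ basis. Your route---Riesz projections for $Z_1$ on $\Hs_1$, then ``orthogonal-sum bookkeeping''---is correct in spirit but the last phrase is misleading: the root vectors $v_j$ of $P$ at nonzero eigenvalues are \emph{not} contained in $\Hs_1$; by \eqref{eigcorv} they carry a nonzero $\Hs_0$-component, so the decomposition is lower-triangular rather than orthogonal. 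The transfer of minimality still goes through (project onto $\Hs_1$ to recover the $u_j$'s for one direction; for the other, use that the Riesz projections of the compact operator $Z=-R_0$ on $\Hs$ annihilate $\ker Z=\Hs_0$), but it is not the trivial bookkeeping you suggest. The paper's direct citation sidesteps this wrinkle.
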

\begin{proof}
Since $P_0$ is compact the minimality of the set of eigenvectors and associated vectors corresponding to non-zero eigenvalues of $P$ in $\G$ is the result of \cite[Theorem 22.13 a]{ASM}. 
From Corollary \ref{eigcor} none of the eigenvectors of $P(0)$ is an eigenvector or an associated vector of $P(\lambda)$ for $\lambda\neq0$. These vectors form a basis in $\Hs_0$, which implies that minimality extends to the set of eigenvectors and associated vectors corresponding to all eigenvalues in $\G$.

From Proposition \ref{1propeq} and that all vectors $v\in \Hs_0$ are eigenvectors of $P(0)$ it follows that the completeness result holds for $P$ if the set of eigenvectors and associated vector of $-C_0$ is complete in $\Hs_1$. From Lemma \ref{1lemext} it follows that $C_0=(I_{\Hs_1}+K_1)H_1$, $\ker H_1=\{0\}$ and the spectra of $H_1$ is located on a finite number of rays (since it coincides with the spectra of $H$ outside $0$). The completeness of the set of eigenvectors and associated vectors for $-C_0$ is then equivalent to the statement of \cite[Theorem 4.2]{ASM}.
\end{proof}

\begin{thm}\label{main2}
Let $P$ be defined as \eqref{Adef2} with $k\geq1$ and let  $R$ denote the spectral divisor of order $k$ on some $\G\subset\C$ with $0\in\G$. Define $\Hs_0:=\ker H$, $\Hs_1:=\Hs\bot\Hs_0$, and $V_i$ as in \eqref{1partis}. Assume that the operator $I_{\Hs_1}+K_1$ defined in \eqref{eq:Lem32}  is invertible  and that the spectra of $H$ is located on a finite number of rays from the origin.  

Then, the origin is an accumulation point of a branch of eigenvalues if $\dim \Hs_1=\infty$. Moreover, if $\dim \Hs_1<\infty$ and $\ker H\subset \ker P_i$ for $i=1,\hdots, k-1$ then the number of eigenvalues (repeated according to multiplicity) in $\G\setminus\{0\} $ is $k \dim \Hs_1$.
\end{thm}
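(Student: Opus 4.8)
The plan is to reduce everything to the polynomial $C$ of Lemma \ref{1lemext}, exactly as in the proof of Theorem \ref{main}, and then analyse the eigenvalue structure of $C$ near the origin. First I would record that by Lemma \ref{1lemext} the spectral divisor $R$ is equivalent to $C = V_1^*RV_1$ on $\C\setminus\{0\}$ after extension, so by Corollary \ref{eigcor} the non-zero eigenvalues of $P$ in $\G$ (with their Jordan chains, hence multiplicities) coincide with those of $C(\lambda)=\lambda^k+\sum_{i=0}^{k-1}\lambda^iC_i$ acting on $\Hs_1$, where $C_0=(I_{\Hs_1}+K_1)H_1$ with $K_1$ compact and $H_1$ normal, injective, and with spectrum on finitely many rays. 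Invertibility of $I_{\Hs_1}+K_1$ is assumed.

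Next I would treat the finite-dimensional case. If $\dim\Hs_1=n<\infty$, then $C(\lambda)$ is (after identifying $\Hs_1\cong\C^n$) an $n\times n$ monic matrix polynomial of degree $k$, so $\det C(\lambda)$ is a polynomial in $\lambda$ of degree exactly $kn$; hence $C$ has $kn$ eigenvalues counted with multiplicity. One must check that none of these is $0$: since $C_0=(I_{\Hs_1}+K_1)H_1$ is a product of two injective operators on the finite-dimensional space $\Hs_1$ (here $H_1$ is injective because $\Hs_0=\ker H$ was split off), $C(0)=C_0$ is invertible, so $0\notin\sigma(C)$. Therefore all $kn$ eigenvalues lie in $\G\setminus\{0\}$ (shrinking $\G$ if necessary, or rather: they lie in $\sigma(C)=\sigma(P)\cap\G\setminus\{0\}$ by Corollary \ref{eigcor}, using that the spectral divisor captures all of $\sigma(P)\cap\G$), giving the count $k\dim\Hs_1$. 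The extra hypothesis $\ker H\subset\ker P_i$ is what makes Lemma \ref{1lemext} and Corollary \ref{eigcor} applicable in this case, so it is used precisely here.

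For the infinite-dimensional case, the goal is to produce an infinite sequence of eigenvalues of $C$ converging to $0$. The key is that $0\in\sigma(C)$: indeed $C(0)=C_0=(I_{\Hs_1}+K_1)H_1$, and although $I_{\Hs_1}+K_1$ is invertible, $H_1$ is a Schatten-class (hence compact) injective operator on an infinite-dimensional space, so $H_1$ is not invertible and $0\in\sigma(C)$; moreover the earlier Lemma shows $0\in\sigma_c(C)$, so $0$ is not itself an eigenvalue. Since $C(\lambda)=\lambda^k + \sum_{i=0}^{k-1}\lambda^i C_i$ with all $C_i$ compact, the operator function $C$ is, for $\lambda\neq 0$, a compact perturbation of $\lambda^k I_{\Hs_1}$, which is invertible; hence $C(\lambda)$ is Fredholm of index $0$ for every $\lambda\neq0$, and by analytic Fredholm theory $C(\lambda)^{-1}$ is meromorphic on $\C\setminus\{0\}$ with $\sigma(C)\setminus\{0\}$ a discrete set of eigenvalues of finite algebraic multiplicity. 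It remains to rule out that $\sigma(C)$ is empty near $0$, i.e. that $0$ is an isolated point of $\sigma(C)$: if it were, then $C(\lambda)^{-1}$ would be bounded on a punctured disc around $0$, and one derives a contradiction with $0\in\sigma_c(C)$ — concretely, picking a singular Weyl sequence $H_1 w_m\to 0$, $\|w_m\|=1$ for the non-invertible operator $C_0$ and perturbing, one shows $\|C(\lambda_m)^{-1}\|\to\infty$ along suitable $\lambda_m\to0$. The cleanest route, and the one I would actually carry out, is to invoke \cite[Theorem 4.2 of ASM]{ASM} (the same completeness/accumulation result used in Theorem \ref{main}): applied to the compact normal operator $H_1$ with spectrum on finitely many rays and to the compact perturbation structure of $C$, it yields that the non-zero eigenvalues of $C$ accumulate only at $0$ and form an infinite set whenever $\dim\Hs_1=\infty$; pulling this back through Corollary \ref{eigcor} gives a branch of eigenvalues of $P$ accumulating at the origin.

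The main obstacle is the infinite-dimensional accumulation statement: one must genuinely produce infinitely many eigenvalues rather than merely know $0\in\sigma(C)$, and the degree-$k$ (rather than degree-$1$) dependence means one cannot directly read the eigenvalues off a single operator $-C_0$ as in Theorem \ref{main}. Linearising $C$ to a companion-type operator on $\Hs_1^k$ and checking that the relevant resolvent is again a compact perturbation with spectrum controlled by $H_1$ is the technical heart; once that is in place, the ray condition on $\sigma(H_1)$ together with the cited completeness theorem forces the eigenvalues of the linearisation — hence of $C$, hence of $P$ — to accumulate at $0$.
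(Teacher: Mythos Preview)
Your finite-dimensional argument is essentially the paper's: the companion linearization on $\Hs_1^k$ and the determinant count are equivalent, and the extra kernel hypothesis is used exactly where you say. That part is fine.

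The infinite-dimensional part has two genuine gaps. First, the reduction to $C$ via Lemma \ref{1lemext} requires the hypothesis $\ker H\subset\ker P_i$ for $i=1,\dots,k-1$, and the theorem \emph{does not} assume this in the accumulation statement --- it is imposed only in the finite-dimensional clause. So you are not entitled to pass to $C$ at all when $\dim\Hs_1=\infty$; the paper works with $R$ directly for this reason.

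Second, even on $C$ (or $R$), none of your three proposed mechanisms actually produces eigenvalues of the degree-$k$ polynomial. The boundedness claim is false: if $0$ were an isolated spectral point, $C(\lambda)^{-1}$ would be holomorphic on a punctured disc but there is no reason for it to be bounded there, so no contradiction with $0\in\sigma_c(C)$ arises. The reference \cite[Theorem 4.2]{ASM} is a completeness theorem for a single operator $(I+K)H$; it gives eigenvalues of $-C_0$, not of the polynomial $\lambda^k+\sum\lambda^iC_i$. And the companion linearization on $\Hs_1^k$ carries non-compact off-diagonal identity blocks, so it is \emph{not} of the form $(I+\text{compact})\cdot(\text{normal, Schatten-class})$, and the ray hypothesis on $H_1$ does not transfer to it; the ``technical heart'' you describe cannot be carried out as stated.

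What the paper actually does is different and genuinely needed. It first applies Theorem \ref{main} (the $k=1$ case) to conclude that the compact operator $\widehat R(0):=-R_0$ has an infinite sequence of non-zero eigenvalues $\hat\lambda_n(0)\to 0$. It then considers the $z$-dependent compact operator $\widehat R(z):=-R_0-\sum_{i=1}^{k-1}z^iR_i$, tracks its eigenvalues $\hat\lambda_n(z)$ continuously in $z$, and runs a winding-number argument: for small $|z|$ the curve $t\mapsto \hat\lambda_n(|z|e^{it})/(|z|^ke^{ikt})$ winds $k$ times around $1$, while for large $|z|$ it is near $0$, so by continuity there is some $z$ with $\hat\lambda_n(z)=z^k$, i.e.\ $z\in\sigma(R)$. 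This fixed-point step is the missing idea; it is what converts eigenvalues of the single operator $-R_0$ into eigenvalues of the degree-$k$ spectral divisor.
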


\begin{proof}
 Lemma \ref{1lemsd} yields that it is sufficient to prove accumulation of eigenvalues to $0$ of $R$.
 Assume that $\dim \Hs_1=\infty$ and define for $z\in \C$ the compact operator
\begin{equation}\label{eq:Cz}
	\widehat{R}(z):=-R_0-\sum_{i=1}^{k-1}z^iR_i.
\end{equation}
Theorem \ref{main} implies that zero is an accumulation point of eigenvalues of $\widehat{R}(0)$. Let the sequence $\{\hat\lambda_n (0)\}_{n=1}^\infty$ denote the branch of accumulating eigenvalues, repeated according to its multiplicity and ordered non-increasingly in norm. Since $\widehat{R}(z)$ is compact it follows from \cite[Lemma 5, XI.9.5]{DUSC} that there exist eigenvalues $\{\hat\lambda_n (z)\}_{n=1}^\infty$ such that  $\hat\lambda_n (z)\rightarrow \hat\lambda_n (0)$, $z\rightarrow 0$.
  
  Define for $|z|>0$ the operator $\widetilde{R}(z):=\widehat{R}(z)z^{-k}$ and let $\omega_n(z):=\hat\lambda_n (z)z^{-k}$ denote its eigenvalues. From the triangle inequality
\begin{equation}\label{eq:tri}
	|\omega_n(z)|\geq \frac{|\hat\lambda_n (0)|-|\hat\lambda_n (z)-\hat\lambda_n (0)|}{|z|^k} 
\end{equation}
and the continuity of the eigenvalues follows that $|\omega_n(z)|>1$ for $z$ small enough. Define for fixed $|z|$ the function
\[
	f_n(t):=\frac{\hat\lambda_n  (|z|e^{it})}{|z|^ke^{ikt}}, \quad t\in [0,2\pi].
\]
From \eqref{eq:tri} follows that for $z$ small enough the closed curve $f_n$ rotates $k$ times around $1$. Moreover, $\|\widetilde{R}(z)\|\rightarrow 0$, $|z|\rightarrow \infty$ implies $\omega_n(z)\rightarrow 0$ as $|z|\rightarrow \infty$. Hence, from  continuity of $\omega_n$ and that the curve $f_n$ rotates around one for small $|z|$ it follows 
that $\omega_n(z)=1$ for some $z$. But then is $\hat\lambda_n(z)=z^k$ an eigenvalue of $\widehat{R}(z)$, which implies that $z$ is an eigenvalue of $R$.

We have shown that for any $n$ there exists a non-zero eigenvalue of $R$. Therefore, there exists an infinite sequence $\{\lambda_n\}_{n=1}^\infty\subset\sigma (R)$ of eigenvalues of finite multiplicities that accumulate to zero.

Now assume that $\dim \Hs_1<\infty$ and $\ker H\subset \ker P_i$, $i=1,\hdots, k-1$ then Proposition \ref{1propeq} yields that $R$ and $C$ have the same eigenvalues in $\C\setminus\{0\}$ with the same multiplicities. Furthermore, since $\dim \Hs_1<\infty$ and that $0$ is not an eigenvalue of $C$ it follows that $0\notin\sigma(C)$. Additionally, the eigenvalues coincide with the eigenvalues of the companion block linearization, \cite[Lemma 12.5]{ASM},  which is a linear operator of dimension $k \dim \Hs_1$.
\end{proof}

\section{Rational operator functions}
Let $A$ be a selfadjoint operator such that $0\notin \sigma_{ess}(A)$ and assume that  $\alc<\inf W(A)$ for some $\alc\in\R$. Take $N,M_1,\hdots, M_n\in\N$ and let $B_{i,j}\in\CL (\Hs)$ for $i\in\{1,\dots,N\}$,  $j\in\{1,\dots,M_i\}$, where $B_{i,j}$ are relatively compact to $A$. Further, let $S_i$, $i\in\{0,\hdots,M_0\}$  with $M_0\in\N \cup\{0\}$  denote operators relatively compact to $A$. Let $\tau_{i,j}$ be complex polynomials of degree less than $j$ and let $\delta_i\in \C$. Define the closed rational operator function $T:\C\setminus\{\delta_1,\hdots,\delta_{N}\}\rightarrow \Ls(\Hs)$ as
\begin{equation}\label{eq:Tdef}
	T(\omega):=A+\sum_{i=0}^{M_0}\omega^iS_i+\sum_{i=1}^{N}\sum_{j=1}^{M_i}\frac{\tau_{i,j}(\omega)}{(\omega-\delta_i)^j}B_{i,j}, 
\end{equation}
with $\dom T(\omega)=\dom A$ and $\rho(T)\neq\emptyset$. Without loss of generality, we assume that $\delta_i$ are distinct and $\tau_{i,j}(\delta_i)\neq0$.

The claims in Lemma \ref{lem:disc} and in Lemma \ref{3simob} are standard results for Fredholm-valued operator functions \cite[\textsection 20]{ASM} formulated in our setting. For convenience of the reader we provide short proofs.

\begin{lem}\label{lem:disc}
The spectra of \eqref{eq:Tdef} consists of discrete eigenvalues with $\delta_1,\hdots,\delta_N$ and $\infty$ as their only possible accumulation points. 
\end{lem}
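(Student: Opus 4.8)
The plan is to reduce $T$ to an analytic family of Fredholm operators of index zero on $\C\setminus\{\delta_1,\dots,\delta_N\}$ and then quote the analytic Fredholm theorem \cite[\textsection 20]{ASM}. Fix $\mu\in\rho(A)$ (any non-real $\mu$ works, since $A$ is selfadjoint) and consider the bounded operator function
\[
	\mathcal{T}(\omega):=T(\omega)(A-\mu)^{-1},\qquad \omega\in\C\setminus\{\delta_1,\dots,\delta_N\}.
\]
Using \eqref{eq:Tdef} and $A(A-\mu)^{-1}=I+\mu(A-\mu)^{-1}$ one has
\[
	\mathcal{T}(\omega)=A(A-\mu)^{-1}+\left(\sum_{i=0}^{M_0}\omega^iS_i+\sum_{i=1}^{N}\sum_{j=1}^{M_i}\frac{\tau_{i,j}(\omega)}{(\omega-\delta_i)^j}B_{i,j}\right)(A-\mu)^{-1}.
\]
Since $S_i$ and $B_{i,j}$ are relatively compact with respect to $A$, each of $S_i(A-\mu)^{-1}$ and $B_{i,j}(A-\mu)^{-1}$ is compact, so the bracketed term times $(A-\mu)^{-1}$ is a compact operator depending analytically on $\omega$ on $\C\setminus\{\delta_1,\dots,\delta_N\}$ (the scalar coefficients are holomorphic there). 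Moreover $A(A-\mu)^{-1}$ has kernel $(A-\mu)(\ker A)$ and range $\Ran A$, hence is Fredholm of index zero exactly because $0\notin\sigma_{ess}(A)$ and $A$ is selfadjoint. Therefore $\mathcal{T}(\omega)$ is, for every $\omega\in\C\setminus\{\delta_1,\dots,\delta_N\}$, Fredholm of index zero, and $\omega\mapsto\mathcal{T}(\omega)$ is analytic on the connected open set $\C\setminus\{\delta_1,\dots,\delta_N\}$.

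Next I would record that the spectral problems for $T$ and $\mathcal{T}$ coincide. As $(A-\mu)^{-1}$ is a bounded bijection of $\Hs$ onto $\dom A=\dom T(\omega)$, the closed operator $T(\omega)$ is boundedly invertible if and only if $\mathcal{T}(\omega)\in\Bs(\Hs)$ is invertible, with $T(\omega)^{-1}=(A-\mu)^{-1}\mathcal{T}(\omega)^{-1}$ in that case; likewise $u\in\dom A\setminus\{0\}$ satisfies $T(\omega)u=0$ if and only if $(A-\mu)u\in\ker\mathcal{T}(\omega)\setminus\{0\}$. Hence $\sigma(T)=\{\omega\in\C\setminus\{\delta_1,\dots,\delta_N\}:\mathcal{T}(\omega)\text{ is not invertible}\}$, and the hypothesis $\rho(T)\neq\emptyset$ gives a point $\omega_0$ where $\mathcal{T}(\omega_0)$ is invertible.

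Finally, I would apply the analytic Fredholm theorem for Fredholm-valued analytic operator functions \cite[\textsection 20]{ASM} to $\mathcal{T}$ on the connected set $\C\setminus\{\delta_1,\dots,\delta_N\}$ with the invertibility point $\omega_0$: it follows that $\mathcal{T}(\cdot)^{-1}$ is a meromorphic operator function on $\C\setminus\{\delta_1,\dots,\delta_N\}$ whose poles are precisely the points of $\sigma(T)$, each of them isolated and an eigenvalue of finite algebraic multiplicity (finiteness being forced by $\dim\ker\mathcal{T}(\omega)<\infty$). Thus $\sigma(T)$ is a discrete set of eigenvalues with no accumulation point inside $\C\setminus\{\delta_1,\dots,\delta_N\}$, so its only possible accumulation points are $\delta_1,\dots,\delta_N$ and $\infty$. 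The one point that needs care is the Fredholm bookkeeping — that relative $A$-compactness of the $S_i,B_{i,j}$ turns, after multiplication by $(A-\mu)^{-1}$, into a genuine compact perturbation of the index-zero Fredholm operator $A(A-\mu)^{-1}$ — together with the (routine but necessary) verification that invertibility and eigenvectors are preserved under the reduction $T\rightsquigarrow\mathcal{T}$; once these are in place the statement is immediate from the cited analytic Fredholm theory.
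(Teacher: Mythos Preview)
Your proof is correct and follows essentially the same strategy as the paper: reduce to a bounded analytic Fredholm-valued family and invoke \cite[\textsection 20]{ASM}. The only cosmetic difference is that the paper first replaces $A$ by $A+K$ for a selfadjoint finite-rank $K$ with $0\notin\sigma(A+K)$, so that the leading term $A(A+K)^{-1}=I-K(A+K)^{-1}$ is explicitly identity plus compact; you instead keep $A(A-\mu)^{-1}$ and argue directly that it is Fredholm of index zero from $0\notin\sigma_{ess}(A)$, which is an equally valid route to the same conclusion.
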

\begin{proof}
Since $0\notin \sigma_{ess}(A)$ there is a selfadjoint finite rank operator $K$ such that  $0\notin\sigma(A+K)$. Clearly $B_{i,j}$ and $S_i$ are relatively compact to $A+K$. Since $\rho (T)\neq\emptyset$ and $\Ran ((A+K)^{-1})=\dom A$, the result follows directly from \cite[Lemma 20.1, Lemma 20.2]{ASM}. 
\end{proof}

\begin{lem}\label{3simob}
Let $T$ denote the closed operator function \eqref{eq:Tdef} and define the bounded operator function
\begin{equation}\label{transb}
		\widehat{P}(\omega)=(A-\alc)^{-\frac{1}{2}}T(\omega)(A-\alc)^{-\frac{1}{2}}\prod_{i=1}^{N}(\omega-\delta_i)^{M_i},
\end{equation}
which can be extended to an operator polynomial $\widehat{P}:\C\rightarrow\Bs(\Hs)$. Then 
\[
	\sigma(\widehat{P})\setminus\{\delta_1,\hdots,\delta_{N}\}=\sigma_{disc}(\widehat{P})\setminus\{\delta_1,\hdots,\delta_{N}\}=\sigma_{disc}(T)=\sigma(T),
\]
and
\[
	 W(\widehat{P})\setminus\{\delta_1,\hdots \delta_{N}\}=W(T).
\]
Further, $\{u_i\}_{i=0}^{j-1}$ is a Jordan chain of length $j$ for  $\omega\in\sigma_{disc}(\widehat{P})$ if and only if  $\{(A-\alc)^{-\frac{1}{2}}u_i\}_{i=0}^{j-1}$ is a Jordan chain of length $j$ for  $\omega\in\sigma_{disc}(T)$. Moreover, the zeros of $\ip{\widehat{P}(\omega)u}{u}$ and of $\ip{T(\omega)(A-\alc)^{-\frac{1}{2}}u}{(A-\alc)^{-\frac{1}{2}}u}$ coincide for all $u\in\dom(A^\frac{1}{2})$.
\end{lem}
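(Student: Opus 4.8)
The plan is to read off everything from the factorization $\widehat P(\omega) = (A-\alc)^{-1/2} T(\omega)(A-\alc)^{-1/2}\prod_{i=1}^N(\omega-\delta_i)^{M_i}$ together with the observation that $(A-\alc)^{-1/2}$ is a bounded, boundedly invertible, self-adjoint operator with range exactly $\dom(A^{1/2})$ (a dense subspace). Write $G:=(A-\alc)^{-1/2}$ and $p(\omega):=\prod_{i=1}^N(\omega-\delta_i)^{M_i}$, so that $\widehat P(\omega)=G\,T(\omega)\,G\cdot p(\omega)$ on $\C\setminus\{\delta_1,\dots,\delta_N\}$. The first step is to check that $\widehat P$, a priori defined only off the poles, really does extend to an entire operator polynomial: each summand $\tau_{i,j}(\omega)(\omega-\delta_i)^{-j}B_{i,j}$ picks up the factor $p(\omega)$, and since $\deg\tau_{i,j}<j\le M_i$ the product $\tau_{i,j}(\omega)(\omega-\delta_i)^{-j}p(\omega)$ is a genuine polynomial in $\omega$; the terms $A$ and $\omega^i S_i$ are multiplied by the polynomial $p(\omega)$ and sandwiched by $G$, which is bounded, so everything is a bounded-operator-valued polynomial. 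This gives $\widehat P:\C\to\Bs(\Hs)$.

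Next I would establish the spectral identities. For $\omega\notin\{\delta_1,\dots,\delta_N\}$ we have $p(\omega)\ne0$ and $G$ invertible, so $\widehat P(\omega)$ is invertible in $\Bs(\Hs)$ iff $G\,T(\omega)\,G$ is, iff $T(\omega):\dom A\to\Hs$ is boundedly invertible; hence $\sigma(\widehat P)\setminus\{\delta_i\}=\sigma(T)$, and likewise $\omega\in\rho(\widehat P)$ off the poles iff $\omega\in\rho(T)$. Because $\rho(T)\ne\emptyset$, $\widehat P$ is a nonconstant entire Fredholm-valued (indeed, values differing from a fixed invertible operator by something relatively compact, via Lemma~\ref{lem:disc}'s reduction) operator polynomial with nonempty resolvent set, so by the standard analytic Fredholm alternative its spectrum is discrete: $\sigma(\widehat P)\setminus\{\delta_i\}=\sigma_{disc}(\widehat P)\setminus\{\delta_i\}$. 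Combined with Lemma~\ref{lem:disc}, which says $\sigma(T)=\sigma_{disc}(T)$, this yields the full chain $\sigma(\widehat P)\setminus\{\delta_i\}=\sigma_{disc}(\widehat P)\setminus\{\delta_i\}=\sigma_{disc}(T)=\sigma(T)$.

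For the numerical range, fix $\omega\notin\{\delta_i\}$ and $u\ne0$: then $\ip{\widehat P(\omega)u}{u}=p(\omega)\ip{G T(\omega)G u}{u}=p(\omega)\ip{T(\omega)(Gu)}{Gu}$, using self-adjointness of $G$. Since $G$ is a bijection of $\Hs$ onto $\dom(A^{1/2})\supset\dom A$ — wait, more carefully $G$ maps $\Hs$ bijectively onto $\dom(A^{1/2})$ and maps $\dom(A^{1/2})$ bijectively onto $\dom A$; the vectors $Gu$ as $u$ ranges over $\Hs\setminus\{0\}$ are exactly the nonzero elements of $\dom(A^{1/2})=\dom T(\omega)^{1/2}$-form-domain, but $T(\omega)$ as written has domain $\dom A$, so one works with the quadratic form $\ip{T(\omega)v}{v}$ extended to $v\in\dom(A^{1/2})$, which is exactly the last clause of the lemma. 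Thus $\ip{\widehat P(\omega)u}{u}=0$ for some $u\ne0$ iff the form of $T(\omega)$ vanishes on some nonzero $v=Gu\in\dom(A^{1/2})$, i.e. $\omega\in W(T)$ in the form sense; this gives $W(\widehat P)\setminus\{\delta_i\}=W(T)$ and simultaneously the ``zeros coincide'' assertion. The Jordan chain correspondence is the cleanest piece: $\widehat P=E(\omega)\,\big(G T(\omega) G\big)$ with $E(\omega)=p(\omega)I$ scalar and nonzero near any $\omega\notin\{\delta_i\}$, so chains of $\widehat P$ and of $\omega\mapsto G T(\omega)G$ coincide (multiplying a Jordan chain relation by a locally analytic invertible function preserves it); and $GT(\omega)G\,u=0,\ \dots$ translates under $v_i:=Gu_i$ into the chain relations for $T$ by inserting $G^{-1}G$, using that $G$ commutes with nothing but is a fixed invertible operator independent of $\omega$, so $\sum \tfrac1{i!}(GT G)^{(i)}(\omega)u_{j-i}=G\big(\sum\tfrac1{i!}T^{(i)}(\omega)(Gu_{j-i})\big)=0$ iff $\sum\tfrac1{i!}T^{(i)}(\omega)v_{j-i}=0$.

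The main obstacle is bookkeeping around domains rather than anything deep: one must be careful that $T(\omega)$ literally has domain $\dom A$ while $G$ produces vectors in $\dom(A^{1/2})$, so the numerical-range and form statements should be phrased via the closed sesquilinear form associated with $T(\omega)$ (sectorial thanks to $\alc<\inf W(A)$ and the relative compactness of the perturbations), and one should note that $G$ restricts to a bijection $\dom(A^{1/2})\to\dom A$ so that the operator-level Jordan chain manipulation makes sense without leaving $\dom A$. Once that is set up, every equality is an immediate consequence of $\widehat P=p(\omega)\,G T(\omega) G$, the analytic Fredholm theorem, and Lemma~\ref{lem:disc}.
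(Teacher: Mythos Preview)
Your proposal is correct and follows the same approach as the paper's (very brief) proof: factor $\widehat P=p(\omega)\,G\,T(\omega)\,G$, invoke Lemma~\ref{lem:disc} together with the analytic Fredholm alternative for the spectral statements, and read off the numerical range and Jordan chain correspondences directly from the factorization. One small correction: your opening claim that $G=(A-\alc)^{-1/2}$ is ``boundedly invertible'' is false when $A$ is unbounded---you in fact self-correct a few lines later; what you actually use, and what suffices, is that $G$ is injective with range $\dom(A^{1/2})$, restricts to a bijection $\dom(A^{1/2})\to\dom A$, and that both $T(\omega)$ and $\widehat P(\omega)$ are Fredholm of index zero.
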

\begin{proof}
Define $\widehat{T}(\omega):=(A-\alc)^{-\frac{1}{2}}T(\omega)(A-\alc)^{-\frac{1}{2}}$. Lemma \ref{lem:disc} yields that the spectrum of $T$ is discrete. Hence, since $\widehat{T}(\omega)$ is a compact perturbation of $T(\omega)(A-\alc)^{-1}$ it follows from \cite[Lemma 20.1]{ASM} that $\widehat{T}$ has also has discrete spectrum in $\C\setminus\{\delta_1,\hdots,\delta_{N}\}$. The result then follows by straight forward computations.
\end{proof} 

\begin{lem}\label{ratpolmult}
Let $\widehat{P}$ denote the operator polynomial \eqref{transb} and define $\widehat{B}_{i,j}:=(A-\alc)^{-\frac{1}{2}}B_{i,j}(A-\alc)^{-\frac{1}{2}}$ for $i\in\{1,\hdots,N\}$, $j\in \{1,\hdots,M_i\}$. Set $\lambda_l:=\omega-\delta_l$ and 
define the operator polynomial 
\begin{equation}\label{penmult}
	P^{(l)}(\lambda_l):=\dfrac{\widehat{P}(\omega)}{\prod_{i\in\{1,\hdots N\}\setminus\{l\}}(\delta_l-\delta_i)^{M_i}}.
\end{equation}
Then $P^{(l)}$ satisfies the conditions 
\begin{equation}\label{Ai02mult}
	\begin{array}{l}
	P^{(l)}_i=\tau_{l,M_l-i}(\delta_l)\widehat{B}_{l,M_l-i}+\sum_{j=0}^{i-1}x_{i,j}\widehat{B}_{l,M_l-j}, \quad \text{for }i<M_l,\\
	P^{(l)}_{M_l}=I+(A-\alc)^{-\frac{1}{2}}(\alc+X)(A-\alc)^{-\frac{1}{2}},
	\end{array}
\end{equation}
where $x_{i,j}\in \C$ and $X$ is a linear combination of the operators $\widehat{B}_{i,j}$ and $(A-\alc)^{-\frac{1}{2}}S_l(A-\alc)^{-\frac{1}{2}}$, $l\in\{0,\hdots,M_0\}$. 
\end{lem}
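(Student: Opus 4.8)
The plan is to expand $\widehat P(\omega)$ explicitly from its definition \eqref{transb} and track the coefficients after dividing by $\prod_{i\neq l}(\delta_l-\delta_i)^{M_i}$ and shifting the spectral parameter to $\lambda_l=\omega-\delta_l$. Write $\widehat T(\omega)=(A-\alc)^{-1/2}T(\omega)(A-\alc)^{-1/2}$, so that $\widehat P(\omega)=\widehat T(\omega)\prod_{i=1}^N(\omega-\delta_i)^{M_i}$. Substituting \eqref{eq:Tdef} gives three groups of terms: the $A$-term, which becomes $(A-\alc)^{-1/2}(A-\alc+\alc)(A-\alc)^{-1/2}\prod_i(\omega-\delta_i)^{M_i}=\bigl(I+\alc(A-\alc)^{-1}\bigr)\prod_i(\omega-\delta_i)^{M_i}$; the polynomial part $\sum_{i=0}^{M_0}\omega^i(A-\alc)^{-1/2}S_i(A-\alc)^{-1/2}\prod_j(\omega-\delta_j)^{M_j}$; and, for each pair $(i,j)$, the term $\tau_{i,j}(\omega)(\omega-\delta_i)^{-j}\widehat B_{i,j}\prod_{m}(\omega-\delta_m)^{M_m}$. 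Since $j\le M_i$, the factor $(\omega-\delta_i)^{M_i-j}$ survives and cancels the pole, so every term is genuinely a polynomial in $\omega$ — which re-proves the extendability claim of Lemma \ref{3simob} in passing.

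Next I would read off the coefficients of the low-order powers of $\lambda_l=\omega-\delta_l$. The only terms contributing a nonzero value at $\lambda_l=0$ (i.e. at $\omega=\delta_l$) among the singular terms are those with $i=l$: for such a term $\tau_{l,j}(\omega)\widehat B_{l,j}(\omega-\delta_l)^{M_l-j}\prod_{m\neq l}(\omega-\delta_m)^{M_m}$, expanding in powers of $\lambda_l$, the lowest power is $\lambda_l^{M_l-j}$ with coefficient $\tau_{l,j}(\delta_l)\widehat B_{l,j}\prod_{m\neq l}(\delta_l-\delta_m)^{M_m}$. After dividing by $\prod_{m\neq l}(\delta_l-\delta_m)^{M_m}$ as in \eqref{penmult}, the coefficient of $\lambda_l^{M_l-j}$ picks up exactly $\tau_{l,j}(\delta_l)\widehat B_{l,j}$ at leading order; setting $i=M_l-j$ this is the advertised $\tau_{l,M_l-i}(\delta_l)\widehat B_{l,M_l-i}$. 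Higher-order terms in the $\lambda_l$-expansion of that same summand, together with all contributions from the $i\neq l$ singular terms and from the polynomial and $A$-terms, only produce powers $\lambda_l^{r}$ with $r>M_l-j$; grouping them by the operator $\widehat B_{l,M_l-j'}$ they multiply gives the correction sum $\sum_{j'=0}^{i-1}x_{i,j'}\widehat B_{l,M_l-j'}$ with scalar coefficients $x_{i,j'}\in\C$ (products of the $\delta$-differences, the Taylor coefficients of the polynomials $\tau_{l,j}$, and binomial factors). For $i<M_l$ all these coefficients are operators drawn from the finite list $\{\widehat B_{m,j}\}$, with no $A$-dependence, because the $A$-term and the $S$-terms carry the full product $\prod_i(\omega-\delta_i)^{M_i}$ and hence vanish to order $M_l$ at $\omega=\delta_l$.

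Finally I would isolate the top coefficient $P^{(l)}_{M_l}$. Collecting the $\lambda_l^{M_l}$-coefficient, the leading $A$-term contributes $I+\alc(A-\alc)^{-1}=I+(A-\alc)^{-1/2}\alc(A-\alc)^{-1/2}$; the $S$-terms and the subleading parts of the singular terms contribute a fixed linear combination of the operators $\widehat B_{m,j}$ and $(A-\alc)^{-1/2}S_m(A-\alc)^{-1/2}$, which I package as $(A-\alc)^{-1/2}X(A-\alc)^{-1/2}$ after pulling the resolvent factors out (the $\widehat B$'s are already sandwiched; the $S$-terms manifestly so). This yields $P^{(l)}_{M_l}=I+(A-\alc)^{-1/2}(\alc+X)(A-\alc)^{-1/2}$. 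The one point demanding care — and the main bookkeeping obstacle — is the combinatorics of re-expanding each factor $(\omega-\delta_m)^{M_m}=(\lambda_l+(\delta_l-\delta_m))^{M_m}$ and each $\tau_{l,j}(\omega)=\tau_{l,j}(\delta_l+\lambda_l)$ about $\lambda_l=0$ and verifying that, after the normalization in \eqref{penmult}, no power below $\lambda_l^{M_l-j}$ appears for the $(l,j)$-term and that the resulting scalars are finite; this is a routine but somewhat intricate Cauchy-product computation, and I would present it as such rather than write out every index.
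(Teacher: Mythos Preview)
Your proposal is correct and is precisely the ``straight forward computations'' the paper declines to write out: the paper's own proof is the single sentence ``This can be seen from straight forward computations.'' Your expansion of $\widehat P(\omega)$ into the $A$-term, the $S$-terms, and the rational $B$-terms, followed by the change of variable $\lambda_l=\omega-\delta_l$ and the observation that every summand except those with index $i=l$ carries the full factor $\lambda_l^{M_l}$ (hence contributes nothing below degree $M_l$), is exactly what is required and matches the claimed form \eqref{Ai02mult}.
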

\begin{proof}
This can be seen from straight forward computations. 
\end{proof}
\begin{cor}\label{ratpolcormult}
Let $T$ denote \eqref{eq:Tdef} and let $P^{(l)}$ denote the operator polynomial \eqref{penmult}. Then 
\[
	\sigma(P^{(l)})\setminus\{\delta_1-\delta_l,\hdots,\delta_N-\delta_l\}=\{\omega-\delta_l:\omega\in\sigma(T)\},
\]
and 
\[
	\overline{W(P^{(l)})}\setminus\{\delta_1-\delta_l,\hdots,\delta_N-\delta_l\}=\{\omega-\delta_l:\omega\in\overline{W(T)}\}.
\]
Further, $\{u_i\}_{i=0}^{j-1}$ is a Jordan chain of length $j$ for $\omega-\delta_l\in\sigma_{disc}(P^{(l)})$ if and only if  $\{(A-\alc)^{-\frac{1}{2}}u_i\}_{i=0}^{j-1}$ is a Jordan chain of length $j$ for $\omega\in\sigma_{disc}(T)$. Moreover, the roots of $\ip{P^{(l)}(\omega-\delta_j)u}{u}$ and the zeros of $\ip{T(\omega)(A-\alc)^{-\frac{1}{2}}u}{(A-\alc)^{-\frac{1}{2}}u}$ coincide for all $u\in\dom(A^\frac{1}{2})$.
\end{cor}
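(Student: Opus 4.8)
The plan is to notice that $P^{(l)}$ differs from the polynomial $\widehat{P}$ of Lemma~\ref{3simob} only by a nonzero constant factor and by a translation of the spectral variable, and then to push every conclusion of Lemma~\ref{3simob} through these two harmless operations.

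First I would record the elementary identity behind \eqref{penmult}: with $\lambda_l=\omega-\delta_l$ and the nonzero constant $c_l:=\big(\prod_{i\neq l}(\delta_l-\delta_i)^{M_i}\big)^{-1}$, which is independent of $\omega$, definition \eqref{penmult} reads
\[
	P^{(l)}(\lambda_l)=c_l\,\widehat{P}(\lambda_l+\delta_l),\qquad \lambda_l\in\C .
\]
From this I would argue, object by object, that everything in the statement transforms predictably. Multiplication by $c_l\neq 0$ changes neither $\sigma\big(P^{(l)}(\lambda)\big)=\sigma\big(c_l\widehat{P}(\lambda)\big)$, nor the quadratic form up to the scalar $c_l$ (hence not its zero set), nor the Jordan-chain relations, since $\big(P^{(l)}\big)^{(i)}(\lambda)=c_l\,\widehat{P}^{(i)}(\lambda)$. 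The substitution $\lambda_l\mapsto\lambda_l+\delta_l$ then simply translates spectrum, numerical range and the zero sets by $-\delta_l$, and moves the base point of a Jordan chain from $\omega$ to $\omega-\delta_l$ while leaving the chain vectors unchanged. Thus I obtain
\[
	\sigma(P^{(l)})=\sigma(\widehat{P})-\delta_l,\qquad W(P^{(l)})=W(\widehat{P})-\delta_l,
\]
the equivalence that $\{u_i\}$ is a Jordan chain of $P^{(l)}$ at $\omega-\delta_l$ exactly when it is a Jordan chain of $\widehat{P}$ at $\omega$, and the coincidence of the zeros of $\ip{P^{(l)}(\omega-\delta_l)u}{u}$ with those of $\ip{\widehat{P}(\omega)u}{u}$.

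Next I would feed in Lemma~\ref{3simob}, which gives $\sigma(\widehat{P})\setminus\{\delta_1,\hdots,\delta_N\}=\sigma(T)$, $W(\widehat{P})\setminus\{\delta_1,\hdots,\delta_N\}=W(T)$, the bijection between Jordan chains of $\widehat{P}$ at $\omega$ and of $T$ at $\omega$ via $u_i\mapsto(A-\alc)^{-\frac12}u_i$, and the coincidence of the zeros of $\ip{\widehat{P}(\omega)u}{u}$ with those of $\ip{T(\omega)(A-\alc)^{-\frac12}u}{(A-\alc)^{-\frac12}u}$. Deleting the finite exceptional set $\{\delta_1-\delta_l,\hdots,\delta_N-\delta_l\}$ from the translated identities of the previous step and composing the bijections then yields all four assertions; since $c_l\neq0$, the zeros of $\ip{P^{(l)}(\omega-\delta_l)u}{u}$ are precisely those of $\ip{T(\omega)(A-\alc)^{-\frac12}u}{(A-\alc)^{-\frac12}u}$ for every $u\in\dom(A^{\frac12})$.

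The only genuinely non-cosmetic point is the passage to closures in the numerical-range equality, because Lemma~\ref{3simob} is stated without closures. Here I would use that $W(\widehat{P})=W(T)\cup S$ with $S:=W(\widehat{P})\cap\{\delta_1,\hdots,\delta_N\}$ finite, hence closed; therefore $\overline{W(\widehat{P})}=\overline{W(T)}\cup S$, and removing the poles on both sides gives $\overline{W(\widehat{P})}\setminus\{\delta_1,\hdots,\delta_N\}=\overline{W(T)}\setminus\{\delta_1,\hdots,\delta_N\}$, which after translation by $-\delta_l$ is the claimed identity. Apart from this observation, the argument is the same kind of straightforward computation as in Lemma~\ref{ratpolmult} and the final part of Lemma~\ref{3simob}, and I do not expect any real obstacle.
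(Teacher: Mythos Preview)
Your proposal is correct and is exactly the approach the paper takes: the paper's proof is the one-liner ``Follows directly from Lemma~\ref{3simob} and Lemma~\ref{ratpolmult}'', and you have simply spelled out what this means---that $P^{(l)}$ is a nonzero scalar multiple of a translate of $\widehat{P}$, so every assertion of Lemma~\ref{3simob} carries over after shifting by $-\delta_l$. Your treatment of the closure in the numerical-range identity is in fact more careful than the paper's, which glosses over this point.
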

\begin{proof}
Follows directly from Lemma \ref{3simob} and Lemma \ref{ratpolmult}.
\end{proof}

\begin{lem}\label{mobrootmult}
Let $T$ be defined as \eqref{eq:Tdef}.   Assume that for some $j\in\{1,\hdots,N\}$ there is a simply connected open bounded  set $\Gh_l\subset\C$ such that $\delta_l\in \Gh_l$, $\delta_j\notin\Gh_l$ for $j\neq l$ and  $\partial\Gh_l\cap \overline{W(T)}=\emptyset$. Additionally assume that $\Gh_l$ is a disk if $m_l>1$.  Then there is a spectral divisor of order $m_l$ of the operator polynomial \eqref{penmult} on $\G:=\{\omega-\delta_l:\omega\in\Gh_l\}$ and $0\in \G$.
\end{lem}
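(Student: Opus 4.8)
The plan is to exhibit a contour-integral construction of the spectral divisor, following the classical Riesz-projection approach to factorization of holomorphic Fredholm operator functions (cf. \cite[Chapter III]{ASM}), and then verify that the hypotheses on $\Gh_l$ give the degree and pole-freeness required in the definition \eqref{spdiv}. First I would pass from $T$ to the operator polynomial $P^{(l)}$ of \eqref{penmult} via Corollary \ref{ratpolcormult}; since $\partial\Gh_l\cap\overline{W(T)}=\emptyset$, the translated boundary $\partial\G$ avoids $\overline{W(P^{(l)})}$, hence avoids $\sigma(P^{(l)})$, so $P^{(l)}(\lambda)$ is invertible for $\lambda\in\partial\G$ and $\sigma(P^{(l)})\cap\G$ is a finite set of eigenvalues of finite algebraic multiplicity (using Lemma \ref{lem:disc} and Lemma \ref{3simob}, the spectrum inside is discrete and the only accumulation point of $\sigma(T)$ that could lie in $\Gh_l$ is $\delta_l$, whose image is $0\in\G$). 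The total algebraic multiplicity inside $\G$, counted with the contour integral $\frac{1}{2\pi i}\oint_{\partial\G}P^{(l)}{}'(\lambda)P^{(l)}(\lambda)^{-1}\,\d\lambda$, equals $m_l\dim\Hs_1$ for the relevant finite-dimensional reduction — but more directly, the number $k$ we want is $m_l$, which comes out of the analysis at the single accumulation point.

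The core step is the factorization itself. Restricting attention to a disk $\G$ (which is exactly why the hypothesis forces $\Gh_l$ to be a disk when $m_l>1$), I would invoke the finite-meromorphic Fredholm theory: $P^{(l)}$ is a Fredholm-valued analytic function, invertible on $\partial\G$, so by \cite[Chapter III]{ASM} (the generalization of the Smith/local-Smith form, or equivalently Theorem 22.1-type results) there is a monic operator polynomial $R(\lambda)=\lambda^k+\sum_{i=0}^{k-1}\lambda^iR_i$ whose spectrum inside $\G$ coincides with $\sigma(P^{(l)})\cap\G$ with multiplicities, together with an operator polynomial $Q$ with $\sigma(Q)\cap\overline\G=\emptyset$, such that $P^{(l)}=QR$. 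The degree $k$ of the monic divisor is determined by the structure of the eigenvalue $0$: because $P^{(l)}_{M_l}$ in \eqref{Ai02mult} is $I$ plus a compact operator and $P^{(l)}_i$ for $i<M_l$ is a compact combination of the $\widehat B_{l,\cdot}$ (in particular $P^{(l)}_0=\tau_{l,M_l}(\delta_l)\widehat B_{l,M_l}$ with $\tau_{l,M_l}(\delta_l)\neq 0$), the local Smith form at $\lambda=0$ has exactly $m_l$ nontrivial partial multiplicities worth of "mass" on the infinite-dimensional piece, giving $k=m_l$. I would make this precise by computing the partial multiplicities of $P^{(l)}$ at $0$ from \eqref{Ai02mult}: the leading coefficient is invertible, the lower-order coefficients up to index $M_l-1$ are the operators $\tau_{l,M_l-i}(\delta_l)\widehat B_{l,M_l-i}$ plus corrections, and the structure forces each Jordan chain at $0$ to have length at most $m_l$, with the generic length being $m_l$ on the range relevant to $\Hs_1$.

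The main obstacle I anticipate is the precise bookkeeping that pins the order of the divisor to $m_l$ rather than to some other value: one must show that the hypothesis "$\Gh_l$ is a disk if $m_l>1$" together with the explicit form \eqref{Ai02mult} guarantees a monic divisor of degree exactly $m_l$ — neither less (which would miss eigenvalues or mishandle the pole at $\delta_l$ after translating back to $T$) nor more (which \eqref{spdiv} with $M-k\geq 0$ still permits but would not match the multiplicity of the original pole). The cleanest route is to note that $T$ has a pole of order exactly $m_l$ at $\delta_l$ (since $\tau_{l,M_l}(\delta_l)\neq 0$ and $B_{l,M_l}\neq 0$), that multiplying by $\prod(\omega-\delta_i)^{M_i}$ in \eqref{transb} clears this pole, and that consequently the "missing" factor $(\lambda)^{m_l}$ reappears precisely as the degree of the monic part $R$; the disk hypothesis is what lets one appeal to the scalar-type root-counting argument (the rotation-number argument already used in the proof of Theorem \ref{main2}) to identify this degree. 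Finally $0\in\G$ is immediate since $\delta_l\in\Gh_l$, and the compatibility $\sigma(R)=\G\cap\sigma(P^{(l)})$, $\sigma(Q)\subset\C\setminus\overline\G$ is built into the contour construction.
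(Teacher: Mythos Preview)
Your proposal has a genuine gap: the Riesz-projection/Smith-form route you sketch does not, in infinite dimensions, produce a spectral divisor from the hypotheses available. Invertibility of $P^{(l)}$ on $\partial\G$ and Fredholmness inside are not enough; spectral divisors of operator polynomials need not exist in general, and the results in \cite[Chapter III]{ASM} that do yield them (Theorems 26.13 and 26.19) require a specific \emph{numerical-range root-count} hypothesis, not a multiplicity count from a contour integral or a local Smith form. Your claim that ``the total algebraic multiplicity inside $\G$ \dots\ equals $m_l\dim\Hs_1$'' is meaningless when $\Hs_1$ is infinite-dimensional, and your attempt to extract the degree $k=m_l$ from ``partial multiplicities of $P^{(l)}$ at $0$'' fails for a more basic reason: $0$ is an \emph{accumulation point} of $\sigma(P^{(l)})\cap\G$, not an isolated eigenvalue, so no local Smith form is available there, and even at isolated eigenvalues the partial multiplicities do not determine the degree of a global monic divisor on $\G$.

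The paper's argument is different and short. From $\partial\Gh_l\cap\overline{W(T)}=\emptyset$ and Corollary \ref{ratpolcormult} one gets $\partial\G\cap\overline{W(P^{(l)})}=\emptyset$, i.e.\ for every unit $u$ the scalar polynomial $\lambda\mapsto\ip{P^{(l)}(\lambda)u}{u}$ has no zero on $\partial\G$. Hence the number of its zeros inside $\G$ is constant as $u$ varies over the unit sphere. To identify this constant one writes out
\[
\ip{P^{(l)}(\lambda)u}{u}=\lambda^{M_l}\!\!\prod_{i\neq l}\frac{(\lambda+\delta_l-\delta_i)^{M_i}}{(\delta_l-\delta_i)^{M_i}}+\upsilon_u(\lambda),
\]
where every coefficient of $\upsilon_u$ involves $(A-\alc)^{-\frac12}u$; since $A$ is unbounded one can choose $u_i$ with $(A-\alc)^{-\frac12}u_i\to 0$, and the limit polynomial has exactly $M_l$ roots (at $0$) inside $\G$ and the others at $\delta_j-\delta_l\notin\G$. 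So every $\ip{P^{(l)}(\lambda)u}{u}$ has exactly $M_l$ roots in $\G$, and one then invokes \cite[Theorem 26.19]{ASM} for $M_l=1$ (simply connected $\G$ suffices) and \cite[Theorem 26.13]{ASM} for $M_l>1$ (which is precisely why $\G$ must be a disk in that case). This is the mechanism you are missing; the disk hypothesis enters through the cited factorization theorem, not through a rotation-number argument.
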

\begin{proof}  
By definition $\G$ is bounded, $0\in \G$, and $\delta_j-\delta_l\notin \G$  for $j\in\{1,\hdots,N\}\setminus\{l\}$. Then Corollary \ref{ratpolcormult} yields that $\partial \G\cap \overline{W(P^{(l)})}=\emptyset$, which implies that
\[
	\inf_{u\in\Hs}|\ip{P^{(l)}(\lambda) u}{u}|>0\quad\text{for }\lambda\in\partial \G.
\]
From the definition of $P^{(l)}$ it follows that
\[
	\ip{P^{(l)}(\lambda)u}{u}=\lambda^{M_l}\prod_{i\in\{1,\hdots,N\}\setminus\{l\}}\frac{(\lambda+\delta_l-\delta_i)^{M_i}}{(\delta_l-\delta_i)^{M_i}}
	+\upsilon_u(\lambda),
\]
where $\upsilon_u(\lambda)$ is a polynomial whose coefficients depend on $(A-\alc)^{-\frac{1}{2}}u$. Since $A$ is unbounded there is a sequence $\{u_i\}_{i=1}^\infty\in\Hs$ such that $(A-\alc)^{-\frac{1}{2}}u_i\rightarrow 0$. Hence, the finite roots of $\lim_{i\rightarrow \infty}\ip{P_{\pm}(\lambda)u_i}{u_i}$ are $\delta_l-\delta_j$ with multiplicity $M_j$ for all $j\in\{1,\hdots,N\}$. Continuity then yields that there are exactly $M_l$ roots in $\G$. Hence, \cite[Theorem 26.19 and Theorem 26.13]{ASM} yield that we have a spectral divisor for $x_l=1$ and for $x_l>1$, respectively.
  \end{proof}

\begin{thm}\label{prop:mult}
Let $T$ denote the operator function \eqref{eq:Tdef} and assume that $B_{l,M_l}$ for some $l\in\{1,\hdots,N\}$ is selfadjoint with $(A-\alc)^{-\frac{1}{2}}B_{l,M_l}(A-\alc)^{-\frac{1}{2}}\in S_p(\Hs)$.

\begin{itemize}
\item[{\rm (i)}]  Assume that $M_l=1$ and that $A$ is sufficiently large. Then there is a branch of eigenvalues accumulating at $\delta_l$ if and only if $\Hs\bot\ker B_{l,M_l}$ is infinite dimensional.  
Let $\Hs_0:=\ker (A-\alc)^{-\frac{1}{2}}B_{l,M_l}(A-\alc)^{-\frac{1}{2}}$ and let $\Hs_1:=\Hs\bot\Hs_0$ denote the Hilbert spaces with the inner products $\ip{\cdot}{\cdot}_{\Hs_i}=\ip{V_i\cdot}{V_i\cdot}$, where $V_i$ is defined in \eqref{1partis}. Assume that $\{u_j\}$ is the set of eigenvectors and associated vectors corresponding to the branch of  eigenvalues accumulating at $\delta_l$. Then the set $\{V_1^*(A-\alc)^\frac{1}{2}u_j\}$ is complete and minimal in $\Hs_1$.
 
\item[{\rm (ii)}] 
 Assume that $M_l>1$ and $A$ is sufficiently large. Then if $\Hs\bot\ker B_{l,M_l}$ is infinite dimensional there is a branch of eigenvalues accumulating at $\delta_l$.  If $\Hs\bot\ker B_{l,M_l}$ is not infinite dimensional and  $\ker B_{l,M_l}\subset \ker B_{l,i}$ for  $i=1,\hdots, M_l-1$, then there are $M_l \dim \Hs\bot\ker B_{l,M_l}$ eigenvalues, counting multiplicity, corresponding to the branch of eigenvalues.
 \end{itemize}
 \end{thm}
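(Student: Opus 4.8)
The plan is to reduce both statements, via Lemma~\ref{3simob} and Corollary~\ref{ratpolcormult}, to the abstract results of Section~3 applied to the operator polynomial $P^{(l)}$ of \eqref{penmult}, using that the largeness of $A$ makes all of the ``hatted'' operators small in norm and allows us to take $\alc=0$. Concretely, assume $\alc=0$ (permissible once $\inf W(A)>0$, so that $A^{-1/2}$ is bounded), and note that since the operators $\widehat B_{i,j}=A^{-1/2}B_{i,j}A^{-1/2}$ and $A^{-1/2}S_iA^{-1/2}$ are compact while $A^{-1/2}\to 0$ strongly as $A$ grows, their norms can be made as small as we please. First I would choose $\Gh_l$ to be an open disk centred at $\delta_l$ of radius so small that $\overline{\Gh_l}$ excludes the remaining poles $\delta_j$. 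Writing $v=A^{-1/2}u$ one has
\[
\ip{\widehat P(\omega)u}{u}=\Big(\|u\|^2+\sum_{i=0}^{M_0}\omega^i\ip{S_iv}{v}+\sum_{i,j}\frac{\tau_{i,j}(\omega)}{(\omega-\delta_i)^j}\ip{B_{i,j}v}{v}\Big)\prod_{m=1}^{N}(\omega-\delta_m)^{M_m},
\]
and since a suitable closed annular neighbourhood of $\partial\Gh_l$ stays at a fixed positive distance from every $\delta_i$ while $\omega$ remains bounded there, the bracket is $\ge\tfrac12\|u\|^2$ on that neighbourhood once $A$ is large; hence $W(\widehat P)$, and with it $\overline{W(T)}$ by Lemma~\ref{3simob}, is disjoint from $\partial\Gh_l$. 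Lemma~\ref{mobrootmult} then provides a spectral divisor $R$ of order $M_l$ of $P^{(l)}$ on $\G:=\{\omega-\delta_l:\omega\in\Gh_l\}\ni 0$.

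Next I would check that $P^{(l)}$ meets the template \eqref{Adef2} with $k=M_l$. By Lemma~\ref{ratpolmult} (with $\alc=0$), $P^{(l)}_0=(I_\Hs+K)H$ with $K=0$ and $H:=\tau_{l,M_l}(\delta_l)\widehat B_{l,M_l}$; since $B_{l,M_l}$ is selfadjoint and $\widehat B_{l,M_l}\in S_p(\Hs)$, the operator $H$ is a normal $S_p$-operator whose spectrum lies on the two rays $\pm e^{\iu\arg\tau_{l,M_l}(\delta_l)}[0,\infty)$. Moreover $P^{(l)}_{M_l}=I_\Hs+\widetilde K$ with $\widetilde K=A^{-1}YA^{-1}$ for a $Y$ relatively compact to $A$, so $\widetilde K$ is compact of small norm, and the middle coefficients $P^{(l)}_i$ ($1\le i<M_l$) are linear combinations of the compact operators $\widehat B_{l,\cdot}$, hence compact. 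Thus $\Hs_0=\ker H=\ker\widehat B_{l,M_l}$ and $\Hs_1=\Hs\bot\Hs_0$ are exactly the spaces in the statement. For the invertibility of $I_{\Hs_1}+K_1=V_1^*Q_0^{-1}V_1$ (recall $K=0$), observe that $R_0,\dots,R_{M_l-1}$ are all small in norm because $P^{(l)}_0,\dots,P^{(l)}_{M_l-1}$ are, so $Q_0=I_\Hs+\widetilde K-\sum_{i=1}^{M_l}Q_iR_{M_l-i}$ is a small perturbation of $I_\Hs$, whence $I_{\Hs_1}+K_1$ is invertible; in the second case of~(ii), where $\ker B_{l,M_l}\subset\ker B_{l,i}$ is assumed, one gets $\ker H\subset\ker P^{(l)}_i$ and may instead invoke Lemma~\ref{2schurlem}.

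It then remains to read off the conclusions. For~(ii): Theorem~\ref{main2} applied to $P^{(l)}$ yields accumulation of a branch of eigenvalues at $0$ when $\dim\Hs_1=\infty$, and precisely $M_l\dim\Hs_1$ eigenvalues (with multiplicity) in $\G\setminus\{0\}$ when $\dim\Hs_1<\infty$ and $\ker H\subset\ker P^{(l)}_i$; transferring via Corollary~\ref{ratpolcormult} to a punctured neighbourhood of $\delta_l$ for $T$, together with the identity $\dim\Hs_1=\dim(\Hs\bot\ker B_{l,M_l})$ (an elementary consequence of the injectivity of $A^{-1/2}$), gives the statement. For~(i), $k=1$: Theorem~\ref{main2} gives accumulation at $\delta_l$ iff $\dim\Hs_1=\infty$, and Theorem~\ref{main} gives that the eigenvectors and associated vectors of $P^{(l)}$ corresponding to eigenvalues in $\G$ are complete and minimal in $\Hs$. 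To get the refined assertion I would use Corollary~\ref{ratpolcormult} to identify $\{(A-\alc)^{1/2}u_j\}$ with the eigenvectors and associated vectors $\{v_j\}$ of $P^{(l)}$ at the nonzero eigenvalues, and Corollary~\ref{eigcor} (formula~\eqref{eigcorv}) to see that $V_1^*v_j$ is exactly the corresponding Jordan chain vector of $C=V_1^*RV_1$; completeness of $\{V_1^*v_j\}$ in $\Hs_1$ is precisely the reduction inside the proof of Theorem~\ref{main} (completeness of the eigenvectors and associated vectors of $-C_0$), while minimality follows from \cite[Theorem~22.13~a]{ASM} applied directly to $C$, whose constant term $C_0=(I_{\Hs_1}+K_1)H_1$ is compact.

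The hard part is the construction in the first paragraph: making ``$A$ sufficiently large'' quantitative enough to separate $\overline{W(T)}$ from a small circle around the pole $\delta_l$. One must control the numerical range globally near $\delta_l$ despite the singular contribution of $\tfrac{\tau_{l,M_l}(\omega)}{(\omega-\delta_l)^{M_l}}B_{l,M_l}$; the mechanism is that the term $\ip{Au}{u}$ dominates and, after the rescaling by $A^{-1/2}$, forces every other term to be negligible on circles that keep a fixed distance from all the poles. A secondary (purely bookkeeping) difficulty is to propagate the completeness and minimality statements correctly through the two reductions $T\to P^{(l)}\to C$ so that they land on $\{V_1^*(A-\alc)^{1/2}u_j\}$ in $\Hs_1$.
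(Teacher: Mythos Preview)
Your proposal is correct and follows essentially the same route as the paper: reduce to $P^{(l)}$ via Corollary~\ref{ratpolcormult}, use the largeness of $A$ to separate the numerical range from a circle around $\delta_l$ and obtain a spectral divisor via Lemma~\ref{mobrootmult}, verify that $P^{(l)}$ fits the template \eqref{Adef2} with $K=0$, show $I_{\Hs_1}+K_1$ is invertible by a smallness argument, and conclude with Theorems~\ref{main} and~\ref{main2}. The one place where you are a bit looser than the paper is the claim ``$R_0,\dots,R_{M_l-1}$ are all small in norm because $P^{(l)}_0,\dots,P^{(l)}_{M_l-1}$ are'': the recursions \eqref{Cprops} involve $Q_0^{-1}$ and the $Q_i$, whose uniform boundedness you have not yet secured, so the argument is circular as written; the paper closes this by invoking \cite[Theorems~22.11 and~24.2]{ASM}, which give the needed control on the spectral divisor directly from $P^{(l)}$ and the contour (equivalently, one can argue by continuity of the divisor in the limiting polynomial $\lambda^{M_l}\prod_{i\ne l}(\lambda+\delta_l-\delta_i)^{M_i}/(\delta_l-\delta_i)^{M_i}\,I$, whose divisor on $\G$ is $\lambda^{M_l}I$ and whose $Q_0$ is $I$).
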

\begin{proof}
Corollary \ref{ratpolcormult} shows that the spectrum of $T$ is determined by the spectrum of $P^{(l)}$. Similarly as in the proof of Lemma \ref{mobrootmult} it follows that if $A$ is large enough then there is a  component of $\overline{W(P^{(l)})}$ that is contained in some open bounded set $\G\subset \C$ with $0\in\G$ and there are $M_l$ roots of $\ip{P^{(l)}(\lambda)u}{u}$ for all $\lambda\in\G$, where $\G$ is a disk if $M_l>1$. Then similarly as in Lemma \ref{mobrootmult} it follows that $P^{(l)}$ has a spectral divisor of order $M_l$ in  $\G$. From Lemma \ref{ratpolmult}  we obtain
\[
	P^{(l)}_0=\tau_{l,M_l}(\delta_l)(A-\alc)^{-\frac{1}{2}}B_{l,M_l}(A-\alc)^{-\frac{1}{2}}, 
\]
which is a normal operator with spectrum on a ray from the origin and $P^{(l)}_i$, $i=0,\hdots,M_l-1$ are compact. Moreover, if $\ker B_{l,M_l}\subset \ker B_{l,i}$, $i=1,\hdots, M_l-1$ then  $\ker P^{(l)}_0\subset \ker P^{(l)}_i$ for  $i=1,\hdots, M_l-1$. If $A$ is unbounded, then $(A-\alc)^{-1}$ is compact, otherwise since $A$ is sufficiently large, $\alc$ can be chosen to $0$. Hence, in both cases it follows that $P^{(l)}_{M_l}$ can be written in the form $I+\widetilde{K}$, where $\widetilde{K}$ is compact. Hence, $P^{(l)}$ has the form of \eqref{Adef2} with $k=M_l$ and $K=0$.  Let $H:=P_0^{(j)}$, $\Hs_0:=\ker H$, $\Hs_1:=\Hs\bot\Hs_0$, and let $V_i$ denote the operator \eqref{1partis}. From Lemma \ref{ratpolmult}
and the properties of the spectral divisor given in  \cite[Theorem 22.11 and Theorem 24.2]{ASM} it follows that for sufficiently large $A$, the operator $I_{\Hs_1}+K_1$ defined in \eqref{eq:Lem32} is a small perturbation of $I_{\Hs_1}$ and thus invertible.
The result then follows from Theorem \ref{main} and Theorem \ref{main2}.
\end{proof}

Now accumulation of complex eigenvalues to $\infty$ is studied and therefore we introduce in Lemma \ref{ratpolinf}  an auxiliary operator polynomial.
\begin{lem}\label{ratpolinf}
Let $\widehat{P}$ denote the operator function \eqref{transb} and set $\widehat{S}_i:=(A-\alc)^{-\frac{1}{2}}S_i(A-\alc)^{-\frac{1}{2}}$ for $i\in\{0,\hdots,M_0\}$. Define the operator function
\[
	\widetilde{P}(\lambda):=\lambda^n\widehat{P}(\lambda^{-1}), \quad n=\sum_{i=0}^N M_i,
\]
which can be extended to an operator polynomial $\widetilde{P}:\C\rightarrow\Bs(\Hs)$. The coefficients in the polynomial are 
\[
	\begin{array}{l}
	\widetilde{P}_i=\widehat{S}_{M_0-i}+\sum_{j=0}^{i-1}x_{i,j}\widehat{S}_{M_0-j}, \quad \text{for }i<M_0,\\
	\widetilde{P}_{M_0}=I+(A-\alc)^{-\frac{1}{2}}(\alc+X)(A-\alc)^{-\frac{1}{2}},
	\end{array}
\]
where $x_{i,j}\in \C$ and $X$ is a linear combination of the operators $\widehat{S}_i$.
\end{lem}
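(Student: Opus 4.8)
The plan is to build directly on Lemma~\ref{3simob}, which already furnishes the extension of $\widehat P$ to an operator polynomial $\widehat P(\omega)=\sum_{m=0}^{n}\omega^{m}\widehat P_{m}$ on $\C$. First I would record a degree bound. Writing $\Pi(\omega):=\prod_{i=1}^{N}(\omega-\delta_{i})^{M_{i}}$, which is monic of degree $n_{1}:=\sum_{i=1}^{N}M_{i}$, and $\widehat A:=(A-\alc)^{-\frac{1}{2}}A(A-\alc)^{-\frac{1}{2}}$, one has
\[
	\widehat P(\omega)=\widehat A\,\Pi(\omega)+\sum_{i=0}^{M_{0}}\omega^{i}\Pi(\omega)\,\widehat S_{i}+\sum_{i=1}^{N}\sum_{j=1}^{M_{i}}\tau_{i,j}(\omega)(\omega-\delta_{i})^{M_{i}-j}\prod_{k\ne i}(\omega-\delta_{k})^{M_{k}}\,\widehat B_{i,j}.
\]
Here the $\widehat A\,\Pi$-block has degree $n_{1}$, the $\widehat S_{i}$-block has degree at most $i+n_{1}\le M_{0}+n_{1}=n$, and --- using $\deg\tau_{i,j}<j$ --- each $\widehat B_{i,j}$-block has degree $\deg\tau_{i,j}+n_{1}-j\le n_{1}-1$. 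Thus $\deg\widehat P\le n$, so $\widetilde P(\lambda)=\lambda^{n}\widehat P(\lambda^{-1})=\sum_{i=0}^{n}\lambda^{i}\widehat P_{n-i}$ is a genuine polynomial in $\lambda$, which is the asserted extension of $\widetilde P$ to $\C$ and shows $\widetilde P_{i}=\widehat P_{n-i}$; it then remains to compute the top $M_{0}+1$ coefficients $\widehat P_{n},\dots,\widehat P_{n_{1}}$.

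Next I would read these off from the displayed expansion. Write $\Pi(\omega)=\sum_{r=0}^{n_{1}}\pi_{r}\omega^{r}$ with $\pi_{n_{1}}=1$, $\pi_{r}\in\C$, and the convention $\pi_{r}=0$ for $r<0$. For $m>n_{1}$ the $\widehat A\,\Pi$-block (degree $n_{1}$) and the $\widehat B_{i,j}$-blocks (degree $\le n_{1}-1$) contribute nothing to the coefficient of $\omega^{m}$, so only the $\widehat S_{i}$-blocks survive and, for $0\le i<M_{0}$,
\[
	\widetilde P_{i}=\widehat P_{n-i}=\sum_{s=0}^{i}\pi_{n_{1}-s}\,\widehat S_{M_{0}-i+s}.
\]
Isolating the leading term $s=0$, which equals $\widehat S_{M_{0}-i}$ because $\pi_{n_{1}}=1$, and reindexing by $j=i-s$ gives $\widetilde P_{i}=\widehat S_{M_{0}-i}+\sum_{j=0}^{i-1}x_{i,j}\widehat S_{M_{0}-j}$ with $x_{i,j}:=\pi_{n_{1}-i+j}\in\C$, which is the first claimed identity.

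For the remaining coefficient I would evaluate $\widetilde P_{M_{0}}=\widehat P_{n_{1}}$. The $\widehat B_{i,j}$-blocks still drop out; the $\widehat S_{i}$-blocks contribute $\sum_{i=0}^{M_{0}}\pi_{n_{1}-i}\widehat S_{i}=(A-\alc)^{-\frac{1}{2}}(\sum_{i=0}^{M_{0}}\pi_{n_{1}-i}S_{i})(A-\alc)^{-\frac{1}{2}}$; and the $\widehat A\,\Pi$-block contributes $\widehat A$. Since $A$ and $(A-\alc)^{-\frac{1}{2}}$ commute, $\widehat A=A(A-\alc)^{-1}=I+\alc(A-\alc)^{-1}=I+(A-\alc)^{-\frac{1}{2}}\alc(A-\alc)^{-\frac{1}{2}}$, hence
\[
	\widetilde P_{M_{0}}=I+(A-\alc)^{-\frac{1}{2}}(\alc+X)(A-\alc)^{-\frac{1}{2}},\qquad X:=\sum_{i=0}^{M_{0}}\pi_{n_{1}-i}S_{i},
\]
a linear combination of the $S_{i}$ (equivalently of the $\widehat S_{i}$). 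Together with the previous step this gives all the asserted formulas.

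The computation is essentially bookkeeping and I do not expect a genuine obstacle; the one point that has to be handled with care --- and the only place where the structural hypotheses on $T$ enter --- is the degree estimate $\deg\tau_{i,j}+n_{1}-j\le n_{1}-1$, which is exactly what makes the rational (pole) part of $T$ invisible in the top $M_{0}+1$ coefficients of $\widehat P$, hence in $\widetilde P_{0},\dots,\widetilde P_{M_{0}}$. The lower coefficients $\widetilde P_{M_{0}+1},\dots,\widetilde P_{n}$, which are not needed in the sequel, come out of the same expansion as fixed scalar combinations of $\widehat A$, the $\widehat S_{i}$, and the $\widehat B_{i,j}$.
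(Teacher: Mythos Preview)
Your argument is correct and is exactly the ``straight forward computation'' the paper alludes to but omits: you expand $\widehat P$ as a polynomial via the factor $\Pi(\omega)=\prod_{i=1}^N(\omega-\delta_i)^{M_i}$, use $\deg\tau_{i,j}<j$ to bound the degree by $n$, and read off the top $M_0+1$ coefficients. The only minor point is notational: in the displayed formula for $\widetilde P_{M_0}$ your $X=\sum_i\pi_{n_1-i}S_i$ is a combination of the $S_i$, not the $\widehat S_i$ as the lemma literally states, but (as you note) after conjugation by $(A-\alc)^{-1/2}$ the two formulations coincide, and this appears to be a harmless imprecision in the paper's statement.
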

\begin{proof}
Follows by straight forward computations.
\end{proof}

\begin{thm}\label{prop:inf}
Let $T$ denote the operator function \eqref{eq:Tdef}, assume that $M_0>0$ and  $S_{M_0}$ is selfadjoint with $(A-\alc)^{-\frac{1}{2}}S_{M_0}(A-\alc)^{-\frac{1}{2}}\in S_p(\Hs)$.

\begin{itemize}
\item[{\rm (i)}]  Assume that $M_0=1$ and $A$ is sufficiently large. Then there is a branch of eigenvalues accumulating at complex $\infty$ if and only if $\Hs\bot\ker S_{M_0}$ is infinite dimensional. 
Let $\Hs_0:=\ker (A-\alc)^{-\frac{1}{2}}S_{M_0}(A-\alc)^{-\frac{1}{2}}$ and let $\Hs_1:=\Hs\bot\Hs_0$ denote the Hilbert spaces with the inner products $\ip{\cdot}{\cdot}_{\Hs_i}=\ip{V_i\cdot}{V_i\cdot}$, where $V_i$ is defined in \eqref{1partis}. Assume that $\{u_j\}$ is the set of eigenvectors and associated vectors corresponding to the branch of  eigenvalues accumulating at $\infty$. Then the set $\{V_1^*(A-\alc)^\frac{1}{2}u_j\}$ is complete and minimal in $\Hs_1$.
 
\item[{\rm (i)}] 
Assume that $M_0>1$ and $A$ is sufficiently large. Then if $\Hs\bot\ker S_{M_0}$ is infinite dimensional there is a branch of eigenvalues accumulating at complex $\infty$.  If $\Hs\bot\ker S_{M_0}$ is  finite dimensional and  $\ker S_{M_0}\subset \ker S_{i}$ for  $i=1,\hdots, M_0-1$, then there are $M_l \dim \Hs\bot\ker S_{M_0}$ eigenvalues, counting multiplicity, corresponding to the branch of eigenvalues. 
\end{itemize}
 \end{thm}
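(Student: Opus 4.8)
The plan is to reduce the statement about accumulation at complex infinity to the already-established results about accumulation at the origin, namely Theorem \ref{main} and Theorem \ref{main2}, via the Möbius-type change of variable $\lambda = \omega^{-1}$ encoded in Lemma \ref{ratpolinf}. The structure of the argument mirrors closely the proof of Theorem \ref{prop:mult}, the only difference being which auxiliary polynomial is used: instead of the translated polynomial $P^{(l)}$ of Lemma \ref{ratpolmult}, one works with $\widetilde P$ of Lemma \ref{ratpolinf}. The first step is to note that by Lemma \ref{3simob} the spectrum of $T$ coincides with the discrete spectrum of $\widehat P$ away from the poles, and that $\omega \in \sigma(T)$ with $\omega \to \infty$ corresponds exactly to $\lambda = \omega^{-1} \in \sigma(\widetilde P)$ with $\lambda \to 0$; the correspondence of Jordan chains and of zeros of the quadratic form $\ip{\widetilde P(\lambda) u}{u}$ with those of $\ip{T(\omega)(A-\alc)^{-1/2}u}{(A-\alc)^{-1/2}u}$ follows from combining Lemma \ref{3simob} with the explicit substitution $\widetilde P(\lambda)=\lambda^n \widehat P(\lambda^{-1})$.

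Next I would verify that $\widetilde P$ has the structural form required by \eqref{Adef2} with $k = M_0$ and $K = 0$. From Lemma \ref{ratpolinf} one reads off $\widetilde P_0 = \widehat S_{M_0} = (A-\alc)^{-1/2} S_{M_0} (A-\alc)^{-1/2}$, which by the hypothesis that $S_{M_0}$ is selfadjoint and $(A-\alc)^{-1/2} S_{M_0} (A-\alc)^{-1/2} \in S_p(\Hs)$ is a selfadjoint Schatten-class operator; in particular it is normal with spectrum on a line through the origin (two rays), so the spectral hypothesis on $H := \widetilde P_0$ in \eqref{Adef2} holds. The lower coefficients $\widetilde P_i$ for $i < M_0$ are linear combinations of the $\widehat S_j$, hence compact (since $S_j$ is relatively compact to $A$ and $(A-\alc)^{-1/2}$ maps $\Hs$ into $\dom(A^{1/2})$), and $\widetilde P_{M_0} = I + (A-\alc)^{-1/2}(\alc + X)(A-\alc)^{-1/2}$ is of the form $I + \widetilde K$ with $\widetilde K$ compact: when $A$ is unbounded $(A-\alc)^{-1}$ is compact, and when $A$ is bounded we may take $\alc = 0$ since $A$ is assumed sufficiently large, so in either case the correction term is compact. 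The hypothesis $\ker S_{M_0} \subset \ker S_i$ for $i = 1, \dots, M_0-1$ translates (again through Lemma \ref{ratpolinf}, whose coefficients are linear combinations of the $\widehat S_j$ ordered appropriately) into $\ker H \subset \ker \widetilde P_i$ for $i = 1, \dots, M_0 - 1$, which is exactly the hypothesis needed in Theorem \ref{main2} for the finite-dimensional count.

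One then needs the existence of a spectral divisor of order $M_0$ of $\widetilde P$ on some open bounded $\G \ni 0$. As in the proof of Lemma \ref{mobrootmult}, for $A$ sufficiently large there is a component of $\overline{W(\widetilde P)}$ contained in an open bounded set $\G$ with $0 \in \G$ (taken to be a disk when $M_0 > 1$), and $\ip{\widetilde P(\lambda) u}{u}$ has exactly $M_0$ roots in $\G$ for every $u$; the factorization theorems \cite[Theorem 26.19 and Theorem 26.13]{ASM} then yield the spectral divisor. Having placed $\widetilde P$ in the framework of \eqref{Adef2}, the invertibility of $I_{\Hs_1} + K_1$ follows for $A$ sufficiently large because, by \cite[Theorem 22.11 and Theorem 24.2]{ASM}, it is a small perturbation of $I_{\Hs_1}$. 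At this point Theorem \ref{main2} gives accumulation at $\lambda = 0$ — equivalently at $\omega = \infty$ — whenever $\dim \Hs_1 = \infty$, i.e. whenever $\Hs \bot \ker S_{M_0}$ is infinite dimensional, and gives the count $M_0 \dim(\Hs \bot \ker S_{M_0})$ in the finite-dimensional case; for $M_0 = 1$, Theorem \ref{main} supplies in addition completeness and minimality of the eigenvector system, which transfers back to the set $\{V_1^*(A-\alc)^{1/2} u_j\}$ in $\Hs_1$ via the Jordan-chain correspondence of Lemma \ref{3simob} together with the identification $C_0 = (I_{\Hs_1}+K_1)H_1$. The main obstacle is the bookkeeping in the last two steps: one must check that "$A$ sufficiently large" can be chosen uniformly to guarantee simultaneously the separation of a component of $\overline{W(\widetilde P)}$, the correct root count inside $\G$, and the smallness of $K_1$ — this is where the hypothesis that the perturbing operators are relatively compact to $A$ (so that the $\widehat S_i$ have small norm when $A$ is large) does the work, exactly as in Theorem \ref{prop:mult}, so the argument there can be transcribed essentially verbatim.
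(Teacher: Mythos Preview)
Your proposal is correct and takes essentially the same approach as the paper: the paper's proof of Theorem~\ref{prop:inf} consists of a single sentence stating that it is completely analogous to the proof of Theorem~\ref{prop:mult}, using the auxiliary operator polynomial $\widetilde P$ from Lemma~\ref{ratpolinf} in place of $P^{(l)}$. Your write-up simply unpacks that analogy in detail, and the structural checks you list (form \eqref{Adef2} with $K=0$, existence of the spectral divisor via the numerical-range argument, invertibility of $I_{\Hs_1}+K_1$ for $A$ large, then invoking Theorems~\ref{main} and~\ref{main2}) are exactly the steps of the proof of Theorem~\ref{prop:mult} transcribed to the $\omega\mapsto\omega^{-1}$ setting.
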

 \begin{proof}
The proof is completely analogous to that of Theorem \ref{prop:mult}, with the auxiliary operator polynomial in Lemma \ref{ratpolinf}.
 \end{proof}
\section{A class of rational operator functions}
Let $A$ be a selfadjoint operator with $(A-\alc)^{-1}\in S_p(\Hs)$ for some $p\geq1$ and assume that $\inf W(A)>\alc$,  for some $\alc\in\R$. Further, let $B_j$,  $j=1,\hdots,N$ be bounded selfadjoint operators with $\inf W(B_j)\geq0$.
Define the unbounded rational operator function $T:\domain\rightarrow \Ls(\Hs)$ as
 \begin{equation}\label{eq:Tdefex}
	T(\omega):=A-\omega^2-\sum_{j=1}^{N}\frac{\omega^2}{c_j-id_j\omega-\omega^2}B_j, \quad \domain:=\{\omega\in\C\,:\ \prod_{j=1}^{N}(c_j-id_j\omega-\omega^2)\neq 0\},\end{equation}
with $\dom T(\omega)=\dom A$ and $c_j\geq0$, $d_j\geq 0$, for $j\in\{1,\hdots,N\}$. Since $A$ is bounded from below and the operators $B_j$ are bounded, the operator $T(i\mu)$ is invertible for $\mu\in\R$ sufficiently large. Hence, the operator function \eqref{eq:Tdefex} can be written in form \eqref{eq:Tdef} and Theorem \ref{prop:mult} implies that under certain conditions there are branches of eigenvalues accumulating at all the poles of \eqref{eq:Tdefex} for $A$ "sufficiently large". Our aim is to determine a lower bound on $A$ that ensure that there exist components of the numerical range of $T$ such that Lemma \ref{mobrootmult} is applicable and yields a spectral divisor. In Subsection \ref{sec:oneterm}, we use the enclosure of the numerical range introduced in \cite{ATEN} to derive explicit sufficient conditions for accumulation of eigenvalues for the case $N=1$. This approach can in principle be used for $N>1$ but the derivations would be extremely technical.

\subsection{One rational term}\label{sec:oneterm}
Consider the rational operator function defined in \eqref{eq:Tdefex} for $N=1$.
Let $\alpha_0:=\inf W(A)$,  $[\beta_0,\beta_1]:=\overline{W(B)}$ and introduce the constants $\theta:=\sqrt{c-d^2/4}$, $\delta_\pm=\pm\theta-id/2$. Then the operator function is  $T:\C\setminus\{\delta_+,\delta_-\}\rightarrow \Ls(\Hs)$,
\begin{equation}\label{tdef}
	T(\omega):=A-\omega^2-\frac{\omega^2}{c-id\omega-\omega^2}B.
\end{equation}
The case $d=0$ was studied in \cite{MR3543766} and we will therefore assume that $d>0$. The key in our derivation of  sufficient conditions for accumulation of eigenvalues is to prove the existence of a spectral divisor, which depend on properties of the numerical range. The enclosure of the numerical range introduced in \cite{ATEN} is optimal given only the numerical ranges $W(A)$, W(B) and it is of vital importance for the proofs of our results. Define
\begin{equation}\label{3pabdef}
p_{\alpha,\beta}(\omega):=(\alpha-\omega^2)(c-id\omega-\omega^2)-\omega^2\beta
\end{equation}
and order the roots 
\begin{equation}\label{2roots}
r_n:\R\times\R\rightarrow\C,\ n=1,\dots,4,
\end{equation}
of $p_{\alpha,\beta}$ such that they are continuous functions in $(\alpha,\beta)$. The numerical range of $T$ is then
\[
	W (T):=\bigcup_{n=1}^4\left\{r_n(\ip{Au}{u},\ip{Bu}{u}): u\in \dom A,\|u\|=1\right\}.
\]
Since $(\ip{Au}{u},\ip{Bu}{u})\in \Omega:=\overline{W(A)}\times\overline{W(B)}$ for all $u\in \dom A$ with $\|u\|=1$ we obtain an enclosure of $\overline{W(T)}\setminus\{\delta_+,\delta_-\}$ as 
\begin{equation}\label{3endef}
	W_\Omega(T):=\bigcup_{n=1}^4\left\{r_n(\alpha,\beta):(\alpha,\beta)\in \Omega\right\}.
\end{equation}
Note that in \cite{ATEN} the set $W_\Omega(T)$ is defined for the Riemann sphere $\eC$ instead of $\C$. The roots of $p_{\alpha,\beta}$ have the poles $\delta_+$, $\delta_-$, and $\pm\infty$ as limits when $\alpha\rightarrow\infty$. Hence, the set $W_\Omega(T)$ consists of four disjoint components if $A$ is unbounded and the lower bound $\alpha_0>\alpha'$ is large enough. Below, we determine a lower bound on $A$ that ensures that there exists a bounded component of the numerical range of $T$ that contain a pole. The cases $d\neq2\sqrt{c}$ and $d=2\sqrt{c}$ are studied separately, since we in the case $d=2\sqrt{c}$ have a second order pole in $T$. 

\subsection{The case $d\neq2\sqrt{c}$}
In this section we present sufficient conditions on $T$  that assure that Theorem \ref{main} is applicable, and thus guarantee the existence of an infinite sequence of eigenvalues accumulating to $\delta_\pm$. 
\begin{lem}\label{ratpol}
Assume $d\neq2\sqrt{c}$ and let $\widehat{P}$ denote the operator polynomial \eqref{transb}. Define the polynomials $P^\pm$ for the poles $\delta_\pm$ as in \eqref{penmult}. Then
\begin{equation}\label{pen}
P^\pm(\lambda):=\sum_{j=0}^4 \lambda^jP_j^\pm,\quad \lambda\in \C,
\end{equation}
where $H^\pm:=\pm\frac{\delta_\pm^2}{2\theta} (A-\alc)^{-\frac{1}{2}}B(A-\alc)^{-\frac{1}{2}}$ and
\begin{equation}\label{Ai02}
	\begin{array}{l}
	P_0^\pm:=H^\pm,\\
	P_1^\pm:=I+(\alc - \delta_\pm^2)(A-\alc)^{-1}+ \dfrac{2}{\delta_\pm }H^\pm,\\
	P_2^\pm:=\pm\dfrac{1}{2\theta}\left(I+(\alc-  \delta_\pm^2)(A-\alc)^{-1}\right) -2  \delta_\pm(A-\alc)^{-1} +\dfrac{1}{\delta_\pm^2}H^\pm,\\
	P_3^\pm:=\left(\pm \dfrac{id}{2\theta}-2\right)(A-\alc)^{-1},\\
	P_4^\pm:=\mp \dfrac{1}{2\theta}(A-\alc)^{-1}.
	\end{array}
\end{equation}
\end{lem}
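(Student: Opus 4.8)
The plan is to obtain \eqref{pen}--\eqref{Ai02} by an explicit substitution $\omega=\lambda+\delta_\pm$ into the operator polynomial $\widehat{P}$ of \eqref{transb} followed by a term-by-term expansion in $\lambda$. First I would record that the only poles of $T$ in \eqref{tdef} are the simple zeros $\delta_\pm=\pm\theta-id/2$ of the scalar polynomial $q(\omega):=c-id\omega-\omega^2$, so that $(\omega-\delta_+)(\omega-\delta_-)=-q(\omega)$; multiplying $T(\omega)$ by $(\omega-\delta_+)(\omega-\delta_-)$ cancels the denominator and
\[
	\widehat{P}(\omega)=(A-\alc)^{-\frac{1}{2}}\bigl[-(A-\omega^2)q(\omega)+\omega^2 B\bigr](A-\alc)^{-\frac{1}{2}},
\]
which is visibly an operator polynomial of degree $4$. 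Since $d\neq 2\sqrt{c}$ forces $\theta\neq 0$, hence $\delta_+\neq\delta_-$, the definition \eqref{penmult} reads here $P^\pm(\lambda)=\pm\frac{1}{2\theta}\widehat{P}(\lambda+\delta_\pm)$ with $\lambda=\omega-\delta_\pm$, so the whole statement reduces to expanding $\widehat{P}(\lambda+\delta_\pm)$ about $\lambda=0$.

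The expansion uses only elementary identities. Taylor expanding $q$ at $\delta_\pm$, with $q(\delta_\pm)=0$, $q'(\delta_\pm)=-id-2\delta_\pm=\mp 2\theta$ and $q''\equiv -2$, gives $q(\lambda+\delta_\pm)=\mp 2\theta\lambda-\lambda^2$; together with $A-\omega^2=(A-\delta_\pm^2)-2\delta_\pm\lambda-\lambda^2$ and $\omega^2=\delta_\pm^2+2\delta_\pm\lambda+\lambda^2$, multiplying out $-(A-\omega^2)q(\omega)+\omega^2 B$ and collecting powers of $\lambda$ yields the bracket
\[
	\delta_\pm^2 B+\lambda\bigl(\pm 2\theta(A-\delta_\pm^2)+2\delta_\pm B\bigr)+\lambda^2\bigl((A-\delta_\pm^2)\mp 4\theta\delta_\pm+B\bigr)+\lambda^3(id\mp 4\theta)I-\lambda^4 I .
\]
Conjugating each coefficient by $(A-\alc)^{-\frac{1}{2}}$, multiplying by $\pm\frac{1}{2\theta}$, and using $(A-\alc)^{-\frac{1}{2}}(A-\delta_\pm^2)(A-\alc)^{-\frac{1}{2}}=I+(\alc-\delta_\pm^2)(A-\alc)^{-1}$ together with $H^\pm=\pm\frac{\delta_\pm^2}{2\theta}(A-\alc)^{-\frac{1}{2}}B(A-\alc)^{-\frac{1}{2}}$ produces \eqref{Ai02} entry by entry; the scalar factors are tidied up by inserting $\delta_\pm=\pm\theta-id/2$, for instance $\pm\frac{1}{2\theta}(\mp 4\theta\delta_\pm)=-2\delta_\pm$ and $\pm\frac{1}{2\theta}(id\mp 4\theta)=\pm\frac{id}{2\theta}-2$.

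I do not expect a genuine obstacle: the content is careful bookkeeping of signs across the two cases $\pm$ and across the Taylor coefficients of $q$. The two points worth an explicit line are that the apparent pole of $\widehat{P}$ at $\delta_\pm$ really cancels (immediate from $(\omega-\delta_+)(\omega-\delta_-)=-q(\omega)$, so $\widehat{P}$ is the polynomial displayed above) and that the constant term equals $H^\pm$, which is precisely the specialization of Lemma \ref{ratpolmult} with $\tau_{l,M_l}(\delta_\pm)=\pm\delta_\pm^2/(2\theta)$; the remaining coefficients are then forced by the general shape \eqref{Ai02mult} and the expansion above.
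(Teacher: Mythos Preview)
Your proposal is correct and follows the same route as the paper, which simply asserts that the representation follows directly from definition \eqref{penmult}; you have merely made the straightforward substitution $\omega=\lambda+\delta_\pm$ and the term-by-term expansion explicit. The bookkeeping checks out, including the identities $q'(\delta_\pm)=\mp 2\theta$, $-2\delta_\pm\mp 2\theta=id\mp 4\theta$, and the rewriting of the $B$-terms through $H^\pm$.
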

\begin{proof}
The representation follows directly from definition \eqref{penmult}.
\end{proof}

If $c=0$ only the pole $\delta_-=-id$ is of interest since in this case $\delta_+=0$ is a removable singularity. 
\begin{lem}\label{3invlem}
Assume $d<2\sqrt{c}$ and let $P^\pm$ and $H^\pm$ be defined as  \eqref{pen}. Set $\Hs_0:=\ker H^\pm$ and define $V_0$ as in \eqref{1partis}. Then $V_0^*P_1^{\pm}V_0$ is invertible.
\end{lem}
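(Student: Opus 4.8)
The goal is to show that $V_0^*P_1^\pm V_0$ is invertible on $\Hs_0 = \ker H^\pm$, where $H^\pm = \pm\frac{\delta_\pm^2}{2\theta}(A-\alc)^{-1/2}B(A-\alc)^{-1/2}$. First I would note that on $\Hs_0$ the term $\frac{2}{\delta_\pm}H^\pm$ in $P_1^\pm$ contributes nothing after compression: since $B \geq 0$ is selfadjoint and $(A-\alc)^{-1/2}$ is injective, $\ker H^\pm = \ker\big((A-\alc)^{-1/2}B(A-\alc)^{-1/2}\big) = \{u : B(A-\alc)^{-1/2}u = 0\}$, and for $u \in \Hs_0$ we get $H^\pm u = 0$, so $V_0^* H^\pm V_0 = 0$. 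Hence $V_0^* P_1^\pm V_0 = I_{\Hs_0} + V_0^*\big((\alc - \delta_\pm^2)(A-\alc)^{-1}\big)V_0$, and the problem reduces to showing that the compression of $(\alc - \delta_\pm^2)(A-\alc)^{-1}$ to $\Hs_0$ has no eigenvalue equal to $-1$, i.e.\ that $-\tfrac{1}{\alc - \delta_\pm^2}$ is not in the spectrum of $V_0^*(A-\alc)^{-1}V_0$.

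The key observation is that $A-\alc$ is a positive selfadjoint operator (since $\inf W(A) = \alpha_0 > \alc$), so $(A-\alc)^{-1}$ is a positive selfadjoint operator with $\sigma\big((A-\alc)^{-1}\big) \subset (0, (\alpha_0-\alc)^{-1}]$. Its compression $V_0^*(A-\alc)^{-1}V_0$ to the subspace $\Hs_0$ is then a selfadjoint operator whose numerical range, and hence spectrum, is contained in $\big(0, (\alpha_0-\alc)^{-1}\big]$ — in particular it is a \emph{positive real} number or, more precisely, lies in the positive real axis. Therefore the only way $V_0^* P_1^\pm V_0 = I_{\Hs_0} + (\alc - \delta_\pm^2)V_0^*(A-\alc)^{-1}V_0$ can fail to be invertible is if $(\alc - \delta_\pm^2)$ times a positive real number can equal $-1$; this requires $\alc - \delta_\pm^2$ to have a strictly negative real part, or at least a nonzero real part of the wrong sign. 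So I would compute $\operatorname{Re}(\alc - \delta_\pm^2)$ and $\operatorname{Im}(\alc - \delta_\pm^2)$ explicitly from $\delta_\pm = \pm\theta - id/2$ with $\theta = \sqrt{c - d^2/4}$ real (using $d < 2\sqrt c$): one finds $\delta_\pm^2 = \theta^2 - d^2/4 \mp i d\theta = (c - d^2/2) \mp i\theta d$, so $\alc - \delta_\pm^2 = (\alc - c + d^2/2) \pm i\theta d$. Since $d > 0$ and $\theta > 0$ (as $0 < d < 2\sqrt c$ forces $c > 0$), the imaginary part $\pm\theta d$ is nonzero.

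Because $\operatorname{Im}(\alc - \delta_\pm^2) = \pm\theta d \neq 0$, for any positive real $t$ the number $1 + t(\alc - \delta_\pm^2)$ has imaginary part $\pm t\theta d \neq 0$, hence is nonzero. Applying the spectral theorem to the selfadjoint operator $M := V_0^*(A-\alc)^{-1}V_0$ (whose spectrum is a subset of $(0, (\alpha_0-\alc)^{-1}]$), the operator $I_{\Hs_0} + (\alc - \delta_\pm^2)M$ has spectrum $\{1 + (\alc - \delta_\pm^2)t : t \in \sigma(M)\}$, which is bounded away from $0$ since every such point has imaginary part bounded below in modulus by the infimum of $|\pm t\theta d|$; more carefully, one uses that $\sigma(M)$ is a compact subset of $(0,\infty)$ so $\inf\sigma(M) > 0$ and the imaginary parts stay $\geq (\inf\sigma(M))\,\theta d > 0$. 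Hence $0 \notin \sigma\big(I_{\Hs_0} + (\alc - \delta_\pm^2)M\big)$ and $V_0^* P_1^\pm V_0$ is invertible.

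\textbf{Main obstacle.} The only subtlety is handling the compression $M = V_0^*(A-\alc)^{-1}V_0$ correctly: one must confirm it is genuinely selfadjoint and bounded (which is immediate as $(A-\alc)^{-1}$ is bounded and selfadjoint and $V_0$ is a partial isometry onto $\Hs_0$) and that its spectrum really does lie in $[0,\infty)$ — and in fact in $(0,\infty)$ away from $0$, which follows because $V_0^*(A-\alc)^{-1}V_0 \geq (\alpha_0 - \alc)^{-1}\,V_0^* V_0 \cdot 0$... more precisely, for unit $u \in \Hs_0$, $\langle (A-\alc)^{-1}V_0 u, V_0 u\rangle \geq (\|A-\alc\|)^{-1}$ when $A$ is bounded, and when $A$ is unbounded $(A-\alc)^{-1}$ is compact so $0 \in \sigma(M)$ possibly — but $0$ is harmless since $1 + (\alc-\delta_\pm^2)\cdot 0 = 1 \neq 0$. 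So in fact even the accumulation point $0 \in \sigma(M)$ causes no problem, and the argument only needs $\sigma(M) \subset [0,\infty)$ together with $\operatorname{Im}(\alc - \delta_\pm^2) \neq 0$, which is exactly where the hypothesis $d < 2\sqrt c$ (equivalently $\theta \in \R \setminus \{0\}$, $d > 0$) enters decisively.
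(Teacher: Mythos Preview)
Your proposal is correct and follows essentially the same approach as the paper: both reduce $V_0^*P_1^\pm V_0$ to $I_{\Hs_0}+(\alc-\delta_\pm^2)M$ with $M=V_0^*(A-\alc)^{-1}V_0$ selfadjoint nonnegative, compute that $\operatorname{Im}(\alc-\delta_\pm^2)=\pm d\theta\neq 0$ under the hypothesis $d<2\sqrt{c}$, and conclude that $0$ is not in the spectrum. The only cosmetic difference is that the paper invokes Fredholm theory (compactness of $(A-\alc)^{-1}$ forces any spectral point at $0$ to be an eigenvalue, then tests the quadratic form on an eigenvector), whereas you appeal directly to the spectral mapping $\sigma(I+zM)=\{1+zt:t\in\sigma(M)\}$; both routes are short and rely on the same computation.
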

\begin{proof}From \eqref{Ai02} it follows that
\[
	V_0^*P_1^{\pm}V_0=I_{\Hs_0}+\left(\alc-c+\frac{d^2}{2}\pm id\sqrt{c-\frac{d^2}{4}}\right)V_0^*(A-\alc)^{-1}V_0.
\]
Since $P_1^{\pm}=I+ K_\pm$, where $K_\pm$ is compact, it follows that if  $0\in \sigma(V_0^*P_1^{\pm}V_0)$ then zero is an eigenvalue. Assume $0\in \sigma(V_0^*P_1^{\pm}V_0)$  and that $u$ is a corresponding eigenvector, then 
 \[
	\Im\ip{V_0^*P_1^{\pm}V_0u}{u}=\pm d\sqrt{c-\frac{d^2}{4}}\ip{V_0^*(A-\alc)^{-1}V_0u}{u}=0.
\] 
Since $d\sqrt{c-d^2/4}>0$, it follows that $\ip{V_0^*(A-\alc)^{-1}V_0u}{u}=0$ and consequently $\ip{V_0^*P_1^{\pm}V_0u}{u}=1$, which contradicts the assumptions.
\end{proof}

\begin{lem}\label{3invlemi}
Assume $d>2\sqrt{c}$ and let $P^\pm$, $H^\pm$ be defined as in Lemma \ref{ratpol}. Set $\Hs_0:=\ker H_\pm$ and define $V_0$ as in \eqref{1partis}. If $\ac>\delta_\pm^2$ then $V_0^*P_1^{\pm}V_0$ is invertible. 
\end{lem}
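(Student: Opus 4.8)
The plan is to use that when $d>2\sqrt c$ the pole locations become purely imaginary, so that $\delta_\pm^2$ is a real (in fact nonpositive) number, and then to reduce the invertibility claim to a quadratic-form estimate on $\Hs_0$. First I would record that $d>2\sqrt c$ forces $c-d^2/4<0$, hence $\theta=\iu\sqrt{d^2/4-c}$ and $\delta_\pm=\iu(\pm\sqrt{d^2/4-c}-d/2)$; in particular $\delta_\pm^2\in\R$, so the coefficient $\alc-\delta_\pm^2$ appearing in the formula for $P_1^\pm$ in \eqref{Ai02} is real.

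Next I would simplify $V_0^*P_1^\pm V_0$. Since $H^\pm=\pm\tfrac{\delta_\pm^2}{2\theta}(A-\alc)^{-\frac12}B(A-\alc)^{-\frac12}$ is a scalar multiple of a selfadjoint operator it is normal, so its kernel $\Hs_0$ reduces it and $V_0^*H^\pm V_0=0$. Substituting this into \eqref{Ai02} gives
\[
  V_0^*P_1^\pm V_0=I_{\Hs_0}+(\alc-\delta_\pm^2)\,V_0^*(A-\alc)^{-1}V_0 ,
\]
which is selfadjoint because $\alc-\delta_\pm^2$ is real and $(A-\alc)^{-1}$ is selfadjoint. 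Since $P_1^\pm=I+K_\pm$ with $K_\pm$ compact, it suffices to show that this operator is bounded below.

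For the estimate, fix $u\in\Hs_0$ with $\|u\|=1$ and put $t:=\ip{(A-\alc)^{-1}V_0u}{V_0u}=\|(A-\alc)^{-\frac12}V_0u\|^2\ge 0$. Because $\inf W(A)=\ac>\alc$ we have $A-\alc\ge(\ac-\alc)I>0$, hence $0\le t\le(\ac-\alc)^{-1}$, while $\ip{V_0^*P_1^\pm V_0 u}{u}=1+(\alc-\delta_\pm^2)t$. If $\alc-\delta_\pm^2\ge0$ this is $\ge1$; if $\alc-\delta_\pm^2<0$ it is at least $1+(\alc-\delta_\pm^2)(\ac-\alc)^{-1}=(\ac-\delta_\pm^2)/(\ac-\alc)$, which is strictly positive precisely because $\ac>\delta_\pm^2$ and $\ac>\alc$. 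In either case $\ip{V_0^*P_1^\pm V_0 u}{u}\ge\min\{1,(\ac-\delta_\pm^2)/(\ac-\alc)\}>0$, so $V_0^*P_1^\pm V_0$ is a selfadjoint operator bounded below and therefore boundedly invertible.

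The computation is routine; the only point that needs care is that $\alc-\delta_\pm^2$ can have either sign, and it is exactly the hypothesis $\ac>\delta_\pm^2$ together with the norm bound $\|(A-\alc)^{-1}\|\le(\ac-\alc)^{-1}$ that keeps $-1$ out of the spectrum of $(\alc-\delta_\pm^2)V_0^*(A-\alc)^{-1}V_0$. This is the counterpart of Lemma \ref{3invlem}: there $d<2\sqrt c$ makes $\delta_\pm$ genuinely complex and one argues via the imaginary part of the quadratic form, whereas here the reality of $\delta_\pm^2$ makes the direct positivity argument available.
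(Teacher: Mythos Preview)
Your proof is correct and follows essentially the same approach as the paper: both reduce $V_0^*P_1^\pm V_0$ to $I_{\Hs_0}+(\alc-\delta_\pm^2)V_0^*(A-\alc)^{-1}V_0$, case-split on the sign of $\alc-\delta_\pm^2$, and in the negative case use the bound $\|(A-\alc)^{-1}\|\le(\ac-\alc)^{-1}$ together with $\ac>\delta_\pm^2$. The only cosmetic difference is that you phrase the second case as a quadratic-form lower bound on a selfadjoint operator, whereas the paper phrases it as a norm estimate $\|(\alc-\delta_\pm^2)V_0^*(A-\alc)^{-1}V_0\|<1$ and invokes the Neumann series; these are equivalent here.
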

\begin{proof}
From \eqref{Ai02} it follows that $V_0^*P_1^{\pm}V_0=I_{\Hs_0}+(\alc-\delta_\pm^2)V_0^*(A-\alc)^{-1}V_0$. If $\alc\geq\delta_\pm$ then $V_0^*P_1^{\pm}V_0$ is clearly invertible. For $\alc<\delta_\pm$ the inequality $\|(A-\alc)^{-1}\|\leq (\ac-\alc)^{-1}<(\delta_\pm-\alc)^{-1}$ holds. Hence, $\|(\alc-\delta_\pm^2)V_0^*(A-\alc)^{-1}V_0\|<1$ and thus $V_0^*P_1^{\pm}V_0$ is invertible.
\end{proof}

To be able to utilize Theorem \ref{main}, we need to find a spectral root (a spectral divisor of order $1$) of $P_{\pm}$. 

\begin{lem}\label{mobroot}
Let $T$ denote the operator function \eqref{tdef} and set  $\Omega:=\overline{W(A)}\times \overline{W(B)}$. Assume that $d\neq2\sqrt{c}$  and that there is a simply connected bounded open set $\Gh_\pm\subset\C$ such that $\delta_\pm\in \Gh_\pm$, $\delta_\mp\notin\Gh_\pm$, and  $\partial\Gh_\pm\cap W_\Omega(T)=\emptyset$.  Then there is a spectral root of the operator polynomial \eqref{pen} on $\G:=\{\omega-\delta_\pm:\omega\in\Gh_\pm\}$ with $0\in \G$.
\end{lem}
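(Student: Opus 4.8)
The goal is to produce a spectral root of $P^{\pm}$ on $\G=\{\omega-\delta_\pm:\omega\in\Gh_\pm\}$, i.e.\ a factorization $P^{\pm}(\lambda)=Q^{\pm}(\lambda)(\lambda-Z^{\pm})$ with $\sigma(\lambda-Z^{\pm})=\G\cap\sigma(P^{\pm})$ and $\sigma(Q^{\pm})\subset\C\setminus\overline{\G}$. Since $P^{\pm}$ is a degree-$4$ operator polynomial whose values are Fredholm of index zero (each $P_j^{\pm}$ is identity-plus-compact up to the normal factor $H^{\pm}$ in $P_0^{\pm}$), the natural route is the general existence theorem for spectral divisors of Fredholm-valued operator polynomials, \cite[Theorem 26.19]{ASM} (the order-one case), exactly as was done in the proof of Lemma~\ref{mobrootmult}. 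The hypothesis needed there is that $\partial\G\cap\sigma(P^{\pm})=\emptyset$ together with a ``number of roots'' condition guaranteeing that the part of the spectrum inside $\G$ behaves like the spectrum of a degree-one (here, order-$M_l=1$) polynomial; concretely one needs that for every unit $u$ the scalar polynomial $\ip{P^{\pm}(\lambda)u}{u}$ has exactly one root in $\G$, and that $\partial\G$ avoids the numerical range $W(P^{\pm})$.

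\textbf{Step 1.} Translate the hypothesis on $\Gh_\pm$ into a hypothesis on $\G$ and on $P^{\pm}$. By Corollary~\ref{ratpolcormult}, $\overline{W(P^{\pm})}\setminus\{\delta_\mp-\delta_\pm\}=\{\omega-\delta_\pm:\omega\in\overline{W(T)}\}$, and by \eqref{3endef} this is contained in $\{\omega-\delta_\pm:\omega\in W_\Omega(T)\}$. The shift $\omega\mapsto\omega-\delta_\pm$ is a bijection sending $\Gh_\pm$ to $\G$, $\partial\Gh_\pm$ to $\partial\G$, and $\delta_\pm$ to $0$. Hence $\partial\Gh_\pm\cap W_\Omega(T)=\emptyset$ gives $\partial\G\cap\overline{W(P^{\pm})}=\emptyset$, and in particular $\partial\G\cap\sigma(P^{\pm})=\emptyset$ (the spectrum of a Fredholm-valued analytic function restricted to the region where it is not identically singular is contained in the closure of the numerical range for these identity-plus-compact-type polynomials; more simply, $\sigma(P^{\pm})\subset\overline{W(P^{\pm})}$ follows because $\ip{P^{\pm}(\lambda)u}{u}\ne0$ on $\partial\G$ forces $P^{\pm}(\lambda)$ injective there, and Fredholm index zero then gives invertibility). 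Also $0\in\G$ and $\delta_\mp-\delta_\pm\notin\G$, so $\G$ meets $\overline{W(P^{\pm})}$ only through genuine (non-pole) points.

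\textbf{Step 2.} Count the roots inside $\G$. From the explicit leading behaviour one computes, for a unit vector $u$ with $\alpha=\ip{Au}{u}$, $\beta=\ip{Bu}{u}$,
\[
\ip{P^{\pm}(\lambda)u}{u}=\frac{p_{\alpha,\beta}(\lambda+\delta_\pm)}{\prod_{?}(\cdots)}\cdot(A-\alc)^{-1}\text{-weighted factor},
\]
more precisely $\ip{P^{\pm}(\omega-\delta_\pm)u}{u}$ equals, up to the nonzero constant from \eqref{penmult}, $\ip{T(\omega)(A-\alc)^{-\frac12}u}{(A-\alc)^{-\frac12}u}$ times $(c-id\omega-\omega^2)$, whose finite roots are the four roots $r_n(\alpha,\beta)$ of $p_{\alpha,\beta}$ (Corollary~\ref{ratpolcormult}, last sentence, plus Lemma~\ref{3simob}). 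Because $\partial\G$ avoids $W_\Omega(T)$ shifted, none of these roots crosses $\partial\G$ as $(\alpha,\beta)$ ranges over $\Omega$, so the number of roots of $\ip{P^{\pm}(\lambda)u}{u}$ inside $\G$ is constant in $u$; evaluating it at one convenient point (e.g.\ along the sequence $u_i$ with $(A-\alc)^{-\frac12}u_i\to0$, so that the roots tend to $\delta_l-\delta_j$ with their multiplicities, exactly as in Lemma~\ref{mobrootmult}) shows this number is $1$, since $\G$ was chosen to surround the single pole $\delta_\pm$ and no other $\delta_j$.

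\textbf{Step 3.} Invoke the factorization theorem. With $\partial\G\cap\overline{W(P^{\pm})}=\emptyset$ and exactly one root of $\ip{P^{\pm}(\lambda)u}{u}$ in $\G$ for every unit $u$, \cite[Theorem 26.19]{ASM} (equivalently the relevant statement in \cite[\textsection 22]{ASM}) yields a right spectral divisor of order one, i.e.\ a spectral root $-R_0=Z^{\pm}$ of $P^{\pm}$ on $\G$ with $0\in\G$; the compactness of $R_0,\dots,R_{k-1}$ and the normal-plus-compact structure are then automatic from Lemma~\ref{1lemsd}. \emph{The main obstacle} is Step~2: one must be careful that the relevant scalar function whose roots are counted is really $\ip{P^{\pm}(\lambda)u}{u}$ (a degree-$4$ polynomial in $\lambda$, since $P_4^{\pm}\ne0$), not $p_{\alpha,\beta}$ itself, and that the connected-component/continuity argument genuinely forces the count to be $1$ rather than $0$ — this uses both that $0=\delta_\pm-\delta_\pm\in\G$ is a limit of roots and that $\Gh_\pm$ excludes $\delta_\mp$, so only one root-branch enters. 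Everything else is bookkeeping parallel to Lemma~\ref{mobrootmult}, specialized to $m_l=1$.
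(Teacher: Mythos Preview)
Your proposal is correct and follows essentially the same approach as the paper, just unpacked: the paper's proof is the single line ``Follows from Lemma~\ref{mobrootmult} and the property $W_\Omega(T)\supset \overline{W(T)}$,'' whereas you re-derive the relevant steps of Lemma~\ref{mobrootmult} (the numerical-range separation, the root count via $(A-\alc)^{-1/2}u_i\to0$, and the invocation of \cite[Theorem~26.19]{ASM}) in this specialized $N=1$, $M_l=1$ setting. There is no new idea needed beyond observing that the enclosure hypothesis $\partial\Gh_\pm\cap W_\Omega(T)=\emptyset$ is stronger than the hypothesis $\partial\Gh_\pm\cap\overline{W(T)}=\emptyset$ required by Lemma~\ref{mobrootmult}.
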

\begin{proof}
Follows from Lemma \ref{mobrootmult} and the property $W_\Omega(T)\supset \overline{W(T)}$.
\end{proof}

The points in the set $W_\Omega(T)$ defined in \eqref{3endef} depend continuously on $\alpha$ and we can therefore make the following definition:
\begin{defn}\label{3mergdef} 
Two components of $W_\Omega(T)$ are said to \emph{merge} if $W_{[\alpha,\infty)\times  \overline{W(B)}}(T)$ has more components than $W_{[\alc,\infty)\times \overline{W(B)}}(T)$ for all $\alpha>\alc$. Assume that two components of $W_\Omega(T)$ merge and let $\oc\in W_{[\alc,\infty)\times \overline{W(B)}}(T)$ be a point such that the distance to each of the merging components of $W_{[\alpha,\infty)\times \overline{W(B)}}(T)$ for $\alpha>\alc$ goes to zero as $\alpha\rightarrow\alc$. The point $\oc$ is then called a  \emph{critical point} of $W_\Omega(T)$.
\end{defn} 

The goal is to find the largest $\alc$ such that the component containing $\delta_\pm$ merges with some other component. Then the conditions of Lemma \ref{mobroot} hold for $\alc<\ac$. An important property of the roots $r_n$ is that if $r_n(\alc,\bec)\neq r_m(\alc,\bec)$ then $r_n(\alc,\bec)\neq r_m(\alc,\beta)$ for all $\beta\in \overline{W(B)}$ \cite[Lemma 2.8]{ATEN}. This implies the following property at a critical point: if $\oc$ is a critical point then either there exists a $\bec\in \overline{W(B)}$ such that $\oc$ is a root of multiplicity at least two of $p_{\alc,\bec}$ or $r_n(\alc,\bec)=\oc$  for some  $n\in\{1,2,3,4\}$ and $r_m(\alpha,\beta)=\oc$, $n\neq m$
for some $\alpha>\alc$ and $\beta\in\overline{W(B)}$. In the following we will utilize this property in the analysis of the critical points. 

\begin{lem}\label{3lemdou}
Let $T$ denote the operator function \eqref{tdef} and let $W_{\Omega}(T)$ denote the set \eqref{3endef}. Assume that $W_\Omega(T)$ has a critical point $\oc\in \C\setminus i\R$.  Then
\[
	\alc=c>\frac{d^2}{16},\quad \frac{d^2}{4}\in W(B),\quad \oc=\mp\sqrt{c-\frac{d^2}{16}}-i\frac{d}{4}.
\]
\end{lem}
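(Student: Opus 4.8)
The plan is to analyze a critical point $\oc\notin i\R$ by exploiting the dichotomy stated just before the lemma: either $\oc$ is a root of multiplicity $\geq 2$ of $p_{\alc,\bec}$ for some $\bec\in\overline{W(B)}$, or two distinct root-functions $r_n,r_m$ take the common value $\oc$ with one of them already at parameter $\alc$ and the other at some $\alpha>\alc$. First I would rule out the second alternative: if $r_n(\alc,\bec)=\oc=r_m(\alpha,\beta)$ with $\alpha>\alc$, I want to show the two components were already merged at $\alc$ (hence $\oc$ would not witness a genuine merging) or, using the monotonicity/continuity of the $r_n$ in $\alpha$ from \cite{ATEN} together with \cite[Lemma 2.8]{ATEN}, force $\alpha=\alc$ and a repeated root after all. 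Once reduced to the first alternative, the problem becomes purely algebraic: $\oc$ is a double root of the quartic $p_{\alpha,\beta}(\omega)=(\alpha-\omega^2)(c-id\omega-\omega^2)-\omega^2\beta$ for suitable real $\alpha=\alc$, $\beta=\bec\in\overline{W(B)}$.

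Next I would impose $p_{\alc,\bec}(\oc)=0$ and $p'_{\alc,\bec}(\oc)=0$ and solve. Writing $\omega=\oc$, the condition $\partial_\omega p=0$ gives one relation; I would solve the pair for $\alc$ and $\bec$ as rational functions of $\oc$ (and $c,d$), eliminating to get a single polynomial equation in $\oc$. The claim $\alc=c$ suggests that the elimination collapses dramatically: I expect that substituting the double-root condition forces $\oc^2 = c - d^2/16$ and $\oc = \mp\sqrt{c-d^2/16}-id/4$, with the corresponding $\alc$ coming out to be exactly $c$ and $\bec=d^2/4$. To organize this, I would use that $p_{\alc,\bec}$ has leading term $\omega^4$ and write $p_{\alc,\bec}(\omega)=(\omega-\oc)^2 q(\omega)$ with $q$ a monic quadratic $q(\omega)=\omega^2+a\omega+b$; matching the four coefficients of $p_{\alc,\bec}$ (which are: $\omega^4$ coeff $1$; $\omega^3$ coeff $id$; $\omega^2$ coeff $-(\alc+c+\bec)$ wait—$-\alc-c$ plus... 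I would recompute carefully, the $\omega^2$ coefficient is $-\alc - c - \bec$; the $\omega^1$ coefficient is $id\,\alc$; the constant is $\alc c$) against $(\omega-\oc)^2(\omega^2+a\omega+b)$ yields five equations in the four unknowns $a,b,\alc,\bec$ plus the constraint that $\oc$ be the specified value. The constant term $\alc c$ equals $\oc^2 b$; the vanishing of the $\omega^1$ coefficient of the linear-in-$\omega$ part combined with the others should pin everything down.

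The real/reality bookkeeping is where the hypothesis $\oc\notin i\R$ enters and where I expect the main obstacle. Because $\alc,\bec,c,d$ are real, the double root $\oc$ must satisfy: either $\oc\in\R$, or $\oc$ is non-real and its complex conjugate $\overline{\oc}$ is also a root of $p_{\alc,\bec}$ of the same multiplicity. Since $\oc\notin i\R$ and (I'd argue) $\oc\notin\R$ as well—points of $W_\Omega(T)$ off the imaginary axis that are critical cannot be real here, which needs a short separate argument—we get that $\{\oc,\overline{\oc}\}$ are both double roots, so $p_{\alc,\bec}(\omega)=(\omega-\oc)^2(\omega-\overline{\oc})^2=(\omega^2-2\,\mathrm{Re}(\oc)\,\omega+|\oc|^2)^2$. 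Now matching this square against the explicit quartic is short: the $\omega^3$ coefficient gives $-4\,\mathrm{Re}(\oc)=id$, which is impossible unless $\mathrm{Re}(\oc)$ is purely imaginary—so in fact I need to be more careful and allow $q$ above to be a genuine quadratic with non-conjugate-symmetric roots only if $\oc$ itself sits so that $(\omega-\oc)^2 q(\omega)$ has real-up-to-$i$ structure; concretely, writing $\oc=\mp\theta'-id/4$ with $\theta'=\sqrt{c-d^2/16}$ real makes $p_{\alc,\bec}(\omega)$ factor as $(\omega+id/4\pm\theta')^2\big((\omega+id/4)^2+(\text{something})\big)$ after shifting $\omega\mapsto\omega-id/4$ to kill the cubic term. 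I would therefore first perform the depressed-quartic substitution $\omega=\zeta-id/4$: the cubic term disappears, $p$ becomes $\zeta^4+p_2\zeta^2+p_1\zeta+p_0$ with real-in-$c,d,\alc,\bec$ (up to an overall real factor) coefficients, and a real double root $\zeta_0=\mp\theta'$ forces $p_1=0$ and $p_0=p_2^2/4$ with $\zeta_0^2=-p_2/2$; reading off $p_1=0$ gives one of $\alc=c$ (the other branch $\zeta_0=0$ being the excluded imaginary-axis case), and then $p_2,p_0$ give $\bec=d^2/4$ and $\zeta_0^2=c-d^2/16$, whence $c>d^2/16$ (so that $\theta'$ is real and $\oc\notin i\R$, consistently) and $\bec=d^2/4\in\overline{W(B)}$; upgrading $\bec\in\overline{W(B)}$ to $d^2/4\in W(B)$ uses that $W(B)$ is an interval with closure $[\beta_0,\beta_1]$ and that an endpoint value would put $\oc$ on the boundary rather than interior to a merging, a point I would need to justify from Definition \ref{3mergdef}. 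The main difficulty is thus the combinatorial/topological step of reducing the critical-point dichotomy to the clean "real double root after depression" situation; the algebra afterwards is routine.
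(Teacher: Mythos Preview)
Your overall architecture (dichotomy $\Rightarrow$ double root $\Rightarrow$ algebra via the depressed quartic) is sound and does lead to the stated conclusion, but the paper's proof is far shorter and your write-up has two genuine soft spots.

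\textbf{How the paper does it.} The paper invokes \cite[Proposition~2.6]{ATEN}, which says that for every $\omega\in\C\setminus(i\R\cup\{\delta_\pm\})$ there is a \emph{unique} pair $(\alpha,\beta)$ with $p_{\alpha,\beta}(\omega)=0$. This kills the second alternative in one line: if $r_n(\alc,\bec)=\oc=r_m(\alpha,\beta)$ with $(\alpha,\beta)\neq(\alc,\bec)$, uniqueness is violated. Hence $\oc$ must be a repeated root of $p_{\alc,\bec}$, and the explicit values $\alc=c$, $\bec=d^2/4$, $\oc=\mp\sqrt{c-d^2/16}-id/4$ are then quoted from \cite[Lemma~2.8]{ATEN}. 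Your plan to ``rule out the second alternative'' by monotonicity/continuity is vague; the clean mechanism is precisely this uniqueness of $(\alpha,\beta)$ off the imaginary axis, which you do not identify.

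\textbf{Gaps in your algebra.} After the shift $\omega=\zeta-id/4$ you assume $\zeta_0$ is real without proof. The correct justification is the symmetry $p_{\alpha,\beta}(-\overline{\omega})=\overline{p_{\alpha,\beta}(\omega)}$ (reflection in $i\R$, \emph{not} complex conjugation, which you first invoke and then abandon): if $\zeta_0$ with $\mathrm{Re}\,\zeta_0\neq 0$ is a double root, so is $-\overline{\zeta_0}$, and the vanishing cubic coefficient of the depressed quartic then forces $\mathrm{Im}\,\zeta_0=0$. Second, your claim ``$p_1=0$ gives $\alc=c$'' is wrong: $p_1=0$ only yields the line $\alpha=c+\beta-d^2/4$. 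You need \emph{both} $p_1=0$ and $p_0=p_2^2/4$; solving this system gives exactly two solutions, $(\alpha,\beta)=(c,\,d^2/4)$ and $(\alpha,\beta)=(0,\,d^2/4-c)$, and only for the second does $\zeta_0^2=-p_2/2$ come out negative, putting $\oc\in i\R$ and excluding it. So the argument closes, but not in the way you describe.
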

\begin{proof}
From \cite[Proposition 2.6]{ATEN} it follows that for each $\omega\in\C\setminus \{i\R,\delta_+,\delta_-\}$, $\omega\in W_\Omega(T)$ if and only if $(\alpha,\beta)\in \Omega$, where $(\alpha,\beta)$ are uniquely given. Hence, if two components of $W_\Omega(T)$  has a critical point $\oc\in \C\setminus i\R$  
then $\oc$ must be a double root of $p_{\alc,\bec}$ and the result follows from \cite[Lemma 2.8]{ATEN}.
\end{proof}

If $\alc\geq0$ and $d<2\sqrt{c}$ then $W_\Omega(T)\cap i\R=\emptyset$, which combined with Lemma \ref{3lemdou} yields that $W_\Omega(T)$ has four disjoint component if $\overline{W(A)}\subset (c,\infty)$ or  $d^2/4\notin \overline{W(B)}$. However, critical points $\oc\in i\R$ are in general much more technical to determine. In the following, we  provide a machinery for finding a proper $\alc$ with a critical point on $i\R$. 

\begin{lem}\label{3corhighlow}
Let $T$ be defined as in \eqref{tdef} and assume that $i\mu\in W_\Omega(T)$ for some $\mu\in\R$. Assume that $\alc$ is the largest value such that $i\mu$ is a root of $p_{\alpha',\bec}$ for some $\bec\in \overline{W(B)}$. Then $\bec=\beta_0$ if $c+d\mu+\mu^2\geq0$ and $\bec=\beta_1$ if $c+d\mu+\mu^2<0$.
\end{lem}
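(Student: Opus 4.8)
The statement we must establish is Lemma \ref{3corhighlow}: if $i\mu \in W_\Omega(T)$ for some real $\mu$, and $\alc$ is the largest $\alpha$ for which $i\mu$ is a root of $p_{\alpha,\bec}$ for some $\bec \in \overline{W(B)}$, then $\bec = \beta_0$ when $c + d\mu + \mu^2 \ge 0$ and $\bec = \beta_1$ when $c + d\mu + \mu^2 < 0$. The plan is to substitute $\omega = i\mu$ directly into the defining polynomial $p_{\alpha,\beta}(\omega) = (\alpha - \omega^2)(c - id\omega - \omega^2) - \omega^2\beta$ from \eqref{3pabdef} and solve the resulting scalar equation for $\alpha$ as a function of $\beta$, then optimize over $\beta \in [\beta_0,\beta_1]$.

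First I would compute $p_{\alpha,\beta}(i\mu)$. With $\omega = i\mu$ we have $\omega^2 = -\mu^2$ and $-id\omega = -id(i\mu) = d\mu$, so $c - id\omega - \omega^2 = c + d\mu + \mu^2$ and $\alpha - \omega^2 = \alpha + \mu^2$. Hence
\[
	p_{\alpha,\beta}(i\mu) = (\alpha + \mu^2)(c + d\mu + \mu^2) + \mu^2 \beta.
\]
Setting this equal to zero and solving for $\alpha$ gives
\[
	\alpha = -\mu^2 - \frac{\mu^2 \beta}{c + d\mu + \mu^2},
\]
valid whenever $c + d\mu + \mu^2 \ne 0$; I would note separately that if $c + d\mu + \mu^2 = 0$ then $i\mu$ coincides with a pole $\delta_\pm$ and no finite root condition applies (the hypothesis $i\mu \in W_\Omega(T)$ together with the structure of $\Omega$ excludes this degenerate case, or it can simply be handled by a limiting argument). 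Since $\mu^2 \ge 0$ and $\beta$ ranges over the interval $[\beta_0,\beta_1]$ with $\beta_0 \ge 0$ (recall $\inf W(B) \ge 0$ in this setting, or more precisely $[\beta_0,\beta_1] = \overline{W(B)}$ with $\beta_0 \le \beta_1$), the map $\beta \mapsto \alpha(\beta)$ is affine with slope $-\mu^2/(c + d\mu + \mu^2)$.

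The key step is then to read off the monotonicity. If $c + d\mu + \mu^2 > 0$, the slope is $\le 0$ (and strictly negative unless $\mu = 0$), so $\alpha(\beta)$ is maximized over $[\beta_0,\beta_1]$ at the left endpoint $\beta = \beta_0$; thus $\bec = \beta_0$. If $c + d\mu + \mu^2 < 0$, the slope is $\ge 0$, so $\alpha(\beta)$ is maximized at the right endpoint $\beta = \beta_1$; thus $\bec = \beta_1$. The edge case $\mu = 0$ makes $\alpha$ independent of $\beta$, but then $c + d\mu + \mu^2 = c \ge 0$ falls into the first branch and the choice $\bec = \beta_0$ is still (trivially) valid. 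This gives exactly the claimed dichotomy. The only mild subtlety — and the one place where I would be careful — is verifying that the optimizing $\bec$ actually lies in $\overline{W(B)}$ and that the corresponding $i\mu$ is still a root of $p_{\alc,\bec}$ (rather than the supremum being unattained); but since $\overline{W(B)} = [\beta_0,\beta_1]$ is a compact interval and $\alpha(\beta)$ is continuous (affine), the maximum is attained at an endpoint, so there is no obstruction. I would close by remarking that this endpoint characterization is precisely what is needed to feed into the subsequent critical-point analysis on $i\R$.
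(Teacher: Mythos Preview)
Your proof is correct and takes essentially the same approach as the paper: the paper's entire proof is the single line ``Follows directly from the definition of $p_{\alpha,\beta}$,'' and you have simply spelled out that direct computation by evaluating $p_{\alpha,\beta}(i\mu)=(\alpha+\mu^2)(c+d\mu+\mu^2)+\mu^2\beta$ and reading off the monotonicity of $\alpha$ in $\beta$. Your treatment of the boundary case $c+d\mu+\mu^2=0$ is slightly over-careful (in that case $i\mu$ is a pole and the root condition forces $\mu^2\beta=0$, which still lands you at $\beta'=\beta_0$ when applicable), but this does not affect the argument.
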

\begin{proof}
Follows directly from the definition of $p_{\alpha,\beta}$. 
\end{proof}

\begin{lem}\label{lemtripleroot}
Let $p_{\alpha,\beta}$ be defined as in \eqref{3pabdef}. Then $i\mu$ with $\mu\in \R$ is a root of at least order $3$ of $p_{\alpha,\beta}$ if and only if $\alpha=\alpha_\pm$, $\beta=\beta_\pm$, and $\mu=\mu_\pm$, where
\begin{equation}\label{3en3root}
\begin{array}{l}
	\alpha_\pm=\dfrac{-4096c^3 + 768c^2d^2 + 6cd^4 + d^6 \pm\sqrt{(4c - d^2)(16c - d^2)}(16c - d^2)(32c + d^2)}{54d^4},\\
	\beta_\pm=\dfrac{4(-4c + d^2)\left((32 c+d^2)(-4c+d^2) \pm\sqrt{(4c - d^2)(16c - d^2)}(16c  -  d^2)\right)}{27d^4},\\
	\mu_\pm=\dfrac{-8c-d^2 \pm\sqrt{(4c - d^2)(16c  -  d^2)}}{6d}.
	\end{array}
\end{equation}
\end{lem}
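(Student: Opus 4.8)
The plan is to exploit the explicit factored structure of the quartic polynomial $p_{\alpha,\beta}$ in $\omega$ and translate ``$i\mu$ is a root of order at least $3$'' into an algebraic system. First I would write $p_{\alpha,\beta}(\omega) = (\alpha-\omega^2)(c-id\omega-\omega^2) - \omega^2\beta$, which is a monic quartic in $\omega$ (up to sign of the leading coefficient $1$). Requiring that $\omega_0 := i\mu$ be a root of multiplicity $\ge 3$ is equivalent to the three conditions
\[
	p_{\alpha,\beta}(i\mu) = 0, \qquad p_{\alpha,\beta}'(i\mu) = 0, \qquad p_{\alpha,\beta}''(i\mu) = 0,
\]
where the derivatives are with respect to $\omega$. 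Since we are told $\mu\in\R$, I would substitute $\omega = i\mu$ and separate real and imaginary parts of each of the three equations; because $d>0$ and the coefficients $\alpha,\beta,c,d$ are real, this produces a determined real system. The key structural observation is that $p_{\alpha,\beta}''(\omega)$ is a quadratic in $\omega$ whose coefficients involve only $\alpha,\beta,c,d$ (not the product with $\alpha-\omega^2$ anymore), so the equation $p_{\alpha,\beta}''(i\mu)=0$ is the cleanest; it should pin down a relation between $\mu$ and the parameters. Then $p'_{\alpha,\beta}(i\mu)=0$ and $p_{\alpha,\beta}(i\mu)=0$ successively determine $\beta$ and $\alpha$.

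Concretely, I expect the following order of elimination. From $p''_{\alpha,\beta}(i\mu)=0$ one solves for one of the three unknowns, say $\alpha+\beta$ or a combination, in terms of $\mu$. Substituting into $p'_{\alpha,\beta}(i\mu)=0$ yields a polynomial equation in $\mu$ alone (after the other parameters have been eliminated), and I anticipate this collapses to the quadratic $3d\mu^2 + (8c+d^2)\mu + \text{const} = 0$ or similar, whose two roots are exactly the $\mu_\pm$ displayed in \eqref{3en3root}; the discriminant of that quadratic is $(4c-d^2)(16c-d^2)$, which explains the recurring radical $\sqrt{(4c-d^2)(16c-d^2)}$ in all three formulas. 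Once $\mu=\mu_\pm$ is fixed, back-substitution into the two remaining linear-in-$(\alpha,\beta)$ equations gives a $2\times 2$ linear system whose solution is $(\alpha_\pm,\beta_\pm)$; this is where the degree-$6$ polynomials in $c,d$ appearing in $\alpha_\pm$ come from, after clearing the denominator $54d^4$ (respectively $27d^4$ for $\beta_\pm$). The converse direction is immediate: one simply checks that for $(\alpha,\beta,\mu) = (\alpha_\pm,\beta_\pm,\mu_\pm)$ all three equations $p = p' = p'' = 0$ hold at $\omega=i\mu$, so $i\mu$ is genuinely a triple root.

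The main obstacle is purely computational: carrying out the elimination so that the intermediate expressions do not explode, and correctly keeping track of the $\pm$ branches through the two occurrences of the square root (the one coming from solving the quadratic for $\mu$, and its propagation into the linear system for $(\alpha,\beta)$). I would organize this by introducing the abbreviation $s_\pm := \pm\sqrt{(4c-d^2)(16c-d^2)}$ early and expressing every quantity in terms of $s_\pm$, $c$, and $d$, so that ``order $3$'' versus ``order $4$'' is not an issue we need to separate — the statement only claims order $\ge 3$, and the displayed triples are precisely the solution set of the three-equation system. A minor point worth a sentence: one must note that $\mu_\pm$ as given is real (equivalently $(4c-d^2)(16c-d^2)\ge 0$) exactly when the triple-root configuration is admissible, but since the lemma is stated as an ``if and only if'' over real $\mu$, no case distinction beyond this is needed. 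Given that Lemma \ref{ratpol} and the definition \eqref{3pabdef} are already in place, and that the analogous double-root computation was invoked via \cite[Lemma 2.8]{ATEN}, the proof reduces to ``straightforward but lengthy computation,'' which I would present as such, displaying the three equations $p=p'=p''=0$ at $\omega=i\mu$ and then the solved system, rather than every intermediate line.
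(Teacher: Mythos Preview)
Your proposal is correct and follows exactly the paper's approach: the paper's own proof consists of the single sentence ``The ansatz that $i\mu$ is a triple root of $p_{\alpha,\beta}$ gives a system of equations in $\alpha$, $\beta$, and $\mu$ with the stated solutions.'' You have simply spelled out in more detail how that elimination is organized (solve $p=p'=p''=0$ at $\omega=i\mu$, extract the quadratic for $\mu$ with discriminant $(4c-d^2)(16c-d^2)$, then back-substitute for $\alpha,\beta$), which is precisely the computation the paper declares straightforward.
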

\begin{proof}
The ansatz that $i\mu$ is a triple root of $p_{\alpha,\beta}$ gives a system of equations in $\alpha$, $\beta$, and $\mu$ with the stated solutions. 
\end{proof} 
\begin{cor}\label{cortripleroot}
The constants $\beta_\pm$ defined in Lemma \ref{lemtripleroot} satisfy the following equalities,
\[
\begin{array}{l c l}
		\beta_-=\dfrac{d^2}{4}&\text{if and only if}& c=\dfrac{d^2}{16} \text{ or } c=\dfrac{5d^2}{16},\\
	\beta_+=\dfrac{d^2}{4}&\text{if and only if}&  c=\dfrac{d^2}{16},\\
	\beta_\pm=0&\text{if and only if} & c=\dfrac{d^2}{4},\\
	\end{array}
\]
and the inequalities
\[
\begin{array}{l c l}
	\beta_->\dfrac{d^2}{4} &\text{ if }&  c>\dfrac{5d^2}{16},\\
	0<\beta_-<\dfrac{d^2}{4} &\text{ if }&  \dfrac{d^2}{4}<c<\dfrac{5d^2}{16},\\
	\beta_+<0 &\text{ if }& c>\dfrac{d^2}{4}.\\
	\end{array}
\]
\end{cor}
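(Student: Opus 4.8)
The plan is to treat the corollary as a purely algebraic consequence of the closed-form expressions \eqref{3en3root} for $\beta_\pm$ in Lemma \ref{lemtripleroot}, working with the substitution $x := c/d^2$ to reduce everything to a single dimensionless variable (note $d>0$ throughout this section). Writing $\beta_\pm = \frac{4(d^2-4c)}{27 d^4}\bigl((32c+d^2)(d^2-4c) \mp \sqrt{(4c-d^2)(16c-d^2)}\,(16c-d^2)\bigr)$, I would first dispose of the case $\beta_\pm = 0$: the prefactor $d^2-4c$ vanishes iff $c = d^2/4$, and I must check that the bracketed factor does not simultaneously vanish in a way that would change the "if and only if" — but when $c = d^2/4$ the radical $(4c-d^2)(16c-d^2)$ is zero anyway, so the whole expression collapses cleanly, giving $\beta_\pm = 0 \iff c = d^2/4$.

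For the equalities $\beta_\mp = d^2/4$, I would set $\beta_\mp - d^2/4 = 0$, clear the denominator $27 d^4$, and isolate the square-root term to get an equation of the form (polynomial in $c,d$) $= \pm\sqrt{(4c-d^2)(16c-d^2)}\,(16c-d^2)$; squaring both sides yields a polynomial identity in $c$ and $d^2$ that factors over $\mathbb{Q}$. I expect the relevant factor to be (a constant times) $(16c - d^2)(16c - 5d^2)$ for the $\beta_-$ case and $(16c-d^2)$ (with the other root introduced by squaring being extraneous) for the $\beta_+$ case; the main care is to verify, by checking the sign of the radicand and of the branch choice, which candidate roots of the squared equation actually satisfy the original unsquared equation. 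This is the one genuinely delicate point: squaring can introduce spurious solutions, so for each candidate value of $c$ I would substitute back into the exact formula for $\beta_\pm$ and confirm the sign of the surd matches.

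Finally, for the three strict inequalities I would use continuity in $c$ together with the already-established equality cases as the only places where $\beta_\pm - d^2/4$ (respectively $\beta_+$) can change sign on the relevant interval, reducing each inequality to evaluating $\beta_\pm$ at a single convenient test value of $c$ in each subinterval (e.g. a large $c$ for $c > 5d^2/16$, a midpoint for $d^2/4 < c < 5d^2/16$, and again a large $c$ for the $\beta_+ < 0$ claim), and checking the sign there. One should also note that on these intervals the radicand $(4c-d^2)(16c-d^2)$ is positive (both factors positive when $c > d^2/4$), so $\beta_\pm$ are real and the square-root manipulations are legitimate. The whole argument is elementary; the only real obstacle is bookkeeping the branch of the square root correctly when passing between the squared and unsquared equations, and I would organize the write-up so that every candidate root is verified against \eqref{3en3root} directly.
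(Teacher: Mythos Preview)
Your approach is correct and matches the paper's own proof, which consists solely of the sentence ``The statements follow from straightforward calculations.'' One minor caution: in your restatement of $\beta_\pm$ you have written $\mp$ in front of the square-root term where the formula \eqref{3en3root} has $\pm$, so double-check that sign when you actually carry out the factorizations and the back-substitution step.
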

\begin{proof}
The statements follow from straightforward calculations.
\end{proof}

\begin{lem}\label{3lemdoui}
Let $T$ denote the operator function \eqref{tdef} and let $W_{\Omega}(T)$ denote the set \eqref{3endef}. Assume that $W_\Omega(T)$ has a critical point  $i\mu \in i\R$. Then one of the following properties hold:
\begin{itemize}
\item[{\rm (i)}]   $i\mu$ is a root of order at least $2$ of $p_{\alc,\bec}$,  where $\bec=\beta_0$ if  $c+d\mu+\mu^2\geq0$, and $\bec=\beta_1$ if  $c+d\mu+\mu^2<0$.
\item[{\rm (ii)}] $\alc=\alpha_-$ and $i\mu=i\mu_-\notin \{\delta_+,\delta_-\}$ is a root of order at least $3$ of $p_{\alpha',\beta_-}$, where $\alpha_-$, $\beta_-$, and $\mu_-$ are defined in \eqref{3en3root}.
\end{itemize}
\end{lem}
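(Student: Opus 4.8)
The plan is to adapt the dichotomy already established in the non-imaginary case (Lemma \ref{3lemdou}) to a point $i\mu\in i\R$, where the clean one-to-one correspondence between $\omega$ and $(\alpha,\beta)$ coming from \cite[Proposition 2.6]{ATEN} breaks down. As recalled in the paragraph preceding the statement, at a critical point $\oc=i\mu$ one of two things must happen: either there is a $\bec\in\overline{W(B)}$ for which $i\mu$ is a root of $p_{\alc,\bec}$ of multiplicity at least two, or two of the ordered roots $r_n,r_m$ coincide at $i\mu$ with $r_n(\alc,\bec)=i\mu$ and $r_m(\alpha,\beta)=i\mu$ for some $\alpha>\alc$, $\beta\in\overline{W(B)}$. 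These are the two branches of the argument; I will show that the first branch gives alternative (i) and the second branch, after a maximality argument, forces alternative (ii).

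First I would treat the multiple-root branch. If $i\mu$ is a root of $p_{\alc,\bec}$ of order $\ge 2$ for some $\bec\in\overline{W(B)}$, then among all $\beta\in\overline{W(B)}$ with $i\mu$ a root of $p_{\alc,\beta}$ we may as well take the one singled out by Lemma \ref{3corhighlow}: since $i\mu\in W_\Omega(T)$ and $\alc$ is (by the definition of a critical point) the extremal value of the first coordinate, $\bec$ must be an endpoint of $\overline{W(B)}=[\beta_0,\beta_1]$, namely $\bec=\beta_0$ when $c+d\mu+\mu^2\ge 0$ and $\bec=\beta_1$ when $c+d\mu+\mu^2<0$. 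This is exactly (i). Here the key point is that for a purely imaginary argument the equation $p_{\alpha,\beta}(i\mu)=0$ is affine in $\beta$ with a real coefficient, so monotonicity in $\beta$ selects an endpoint — this is the content of Lemma \ref{3corhighlow} and requires no new computation.

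Next I would treat the crossing branch: $r_n(\alc,\bec)=i\mu=r_m(\alpha,\beta)$ with $n\neq m$, $\alpha>\alc$, $\beta\in\overline{W(B)}$. The idea is that as the parameter decreases from such an $\alpha$ down toward $\alc$, two distinct root-branches that were separated (by \cite[Lemma 2.8]{ATEN}, if $r_n(\alc,\bec)\ne r_m(\alc,\bec)$ then they stay distinct for every $\beta$) must collide, and a collision of two continuous root functions forces a genuine multiple root of some $p_{\alpha'',\beta''}$ with $\alpha''\ge\alc$; by maximality of $\alc$ this happens at $\alpha''=\alc$, so in fact $i\mu$ is again a multiple root of $p_{\alc,\beta''}$ on $i\R$. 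The extra rigidity comes from the fact that we are now in the crossing scenario rather than the double-root-in-$\Omega$ scenario: I would argue that this collision must involve the imaginary axis where the $\omega\leftrightarrow(\alpha,\beta)$ map degenerates, and pushing the order of contact up, $i\mu$ is forced to be a root of order at least $3$. Feeding the ansatz "$i\mu$ is a triple root of $p_{\alpha,\beta}$" into Lemma \ref{lemtripleroot} pins down $(\alpha,\beta,\mu)=(\alpha_\pm,\beta_\pm,\mu_\pm)$; among the two sign choices, the requirement $\bec\in\overline{W(B)}$ with $\inf W(B)=\beta_0\ge 0$ together with Corollary \ref{cortripleroot} (which shows $\beta_+$ can be negative while $\beta_-$ behaves admissibly in the relevant range, and locates $\beta_\pm$ relative to $d^2/4$ and $0$) eliminates the $+$ branch, leaving $\alc=\alpha_-$, $i\mu=i\mu_-$; and one checks $i\mu_-\notin\{\delta_+,\delta_-\}$ since $\delta_\pm$ are poles, not points of $W_\Omega(T)$ where a finite root of $p$ can sit. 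That is alternative (ii).

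The main obstacle I expect is the crossing branch: making rigorous the claim that a crossing of the ordered roots $r_n,r_m$ occurring strictly above $\alc$ must, by the time the parameter reaches $\alc$, manifest as a contact of order $\ge 3$ on $i\R$ rather than merely order $2$. The honest way to do this is to use the structure of $p_{\alpha,\beta}$ as a quartic in $\omega$ whose coefficients are affine in $\alpha$ and $\beta$, together with the fact (from \cite{ATEN}) that off $i\R$ the root assignment is locally biholomorphic in $(\alpha,\beta)$, so the only way two \emph{distinct} branches (in the sense of Lemma 2.8) can be glued by varying $\alpha$ alone is through the branch locus, which on $i\R$ forces the discriminant and its $\alpha$-derivative to vanish simultaneously — i.e.\ a root of order $\ge 3$. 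Everything else — the endpoint selection in (i), and the elimination of the $+$ sign in (ii) — reduces to the already-proved Lemmas \ref{3corhighlow} and \ref{lemtripleroot} and Corollary \ref{cortripleroot}, so it is bookkeeping rather than a new idea.
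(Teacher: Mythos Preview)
There are two genuine gaps.

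\textbf{The endpoint selection in (i).} Your argument that a double root forces $\bec$ to be an endpoint of $\overline{W(B)}$ misuses Lemma~\ref{3corhighlow}. That lemma identifies which endpoint of $\overline{W(B)}$ corresponds to the \emph{largest} $\alpha$ for which $i\mu$ is a root of $p_{\alpha,\beta}$; it does not say that the $\bec$ at which a \emph{double} root occurs is an endpoint, nor does criticality mean $\alc$ is the largest such $\alpha$. For fixed $i\mu$ with $\mu\neq0$ and $c+d\mu+\mu^2\neq0$, the equation $p_{\alpha,\beta}(i\mu)=0$ determines $\beta$ uniquely as an affine function of $\alpha$, so there is no freedom to ``take'' a preferred $\beta$ at $\alc$. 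The paper's argument is different in kind: assuming $i\mu$ is a double root with $\bec$ strictly interior, it uses Lemma~\ref{3corhighlow} to find $\widetilde\alpha>\alc$ with $r_n(\widetilde\alpha,\beta_1)=i\mu$, so an entire imaginary interval around $i\mu$ persists in $W_{[\alc+\gamma,\infty)\times\overline{W(B)}}(T)$; criticality then forces a further branch $r_m$ to approach $i\mu$ from off the axis, and by the reflection symmetry in $i\R$ this produces a root of order at least three. In other words, a double root at a non-endpoint does not contradict (i) directly---it gets pushed into case (ii).

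\textbf{Eliminating the $+$ triple root.} Corollary~\ref{cortripleroot} gives $\beta_+<0$ only when $c>d^2/4$; for $c\le d^2/4$ (in particular throughout the regime $d>2\sqrt{c}$, and e.g.\ $\beta_+=0$ when $c=0$) there is no sign obstruction to $\beta_+\in\overline{W(B)}$. The paper does not eliminate the $+$ case via $\beta_+\notin\overline{W(B)}$. Instead it argues geometrically using the parametrisation $(\alp(\omega),\bet(\omega))$ from \cite[Proposition~2.6]{ATEN}: at $i\mu_+$ with $\beta_+$ interior, both $\alp$ and $\bet$ have a local extremum along $i\R$, so a small open ball just off the axis lies in $W_{[\alc+\gamma,\infty)\times\overline{W(B)}}(T)$. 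Together with the interval on $i\R$ this traps three of the four roots in a single component for $\alpha$ slightly above $\alc$; criticality then forces the fourth to hit $i\mu_+$ as well, i.e.\ a quadruple root, which means $\mu_+=\mu_-$, a contradiction. Your discriminant sketch (``discriminant and its $\alpha$-derivative vanish'') does not capture this, because a triple root already makes the $\omega$-discriminant vanish to higher order, and the issue is topological (how many branches merge) rather than purely algebraic in $\alpha$.

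Finally, the organising dichotomy you propose---multiple-root branch $\Rightarrow$ (i), crossing branch $\Rightarrow$ (ii)---does not match the actual logic. The paper first shows the simple-root (pure crossing) case is impossible, so every critical point on $i\R$ is at least a double root; the split between (i) and (ii) is then by the \emph{order} of that root (exactly two versus at least three), not by which half of the pre-lemma dichotomy one started in.
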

\begin{proof}
Let $r_n(\alpha,\beta)$ denote the roots of $p_{\alpha,\beta}$ defined as in \eqref{2roots}.  First assume that $\mu=0$, then $p_{\alc,\bec}(i\mu)=\alc c=0$, and $c+d\mu+\mu^2\geq0$. If  $\alc=0$, then $i\mu$ is a double root of $p_{\alc,\beta_0}$. If $c=0$ then for some $n\in\{1,2,3,4\}$ it holds that $r_n(\alpha,\beta)=0$ for all $\alpha\in\R$ and $\beta\in \R$. Since $0$ is a critical point it follows that for some $m\neq n$  and $\bec\in \overline{W(B)}$ we have $r_m(\alc,\bec)=0$. Hence $0$ is a root of order two of $p_{\alc,\bec}$ and Proposition \ref{3corhighlow} then yields that $\bec=\beta_0$.
 Now assume that $c+d\mu+\mu^2=0$ for some $\mu\neq 0$. Then $p_{\alc,\bec}(i\mu)=\mu^2\bec=0$ implies $\bec=0$ and the assumption $B\geq0$ yields $\bec=\beta_0=0$. But then $i\mu$ is a root of $p_{\alpha,\bec}$ for all $\alpha$, and thus it follows that $i\mu$ is a double root of $p_{\alc,\bec}$. In the remaining part of the proof we assume $\mu\notin \{0,\delta_+,\delta_-\}$. 

Suppose that the critical point $i\mu=r_n(\alc,\bec)$ is a simple root of  $p_{\alc,\bec}$. Then $i\mu=r_m(\widetilde{\alpha},\widetilde{\beta})$, $m\neq n$, for some $\widetilde{\alpha}\geq\alc$ and $\widetilde{\beta}=f(\widetilde{\alpha})\in \overline{W(B)}$, where
\[
	f(\alpha):=-(c+d\mu+\mu^2)\left(1+\frac{\alpha}{\mu}\right).
\]
Moreover, $f(\alpha)\in \overline{W(B)}$ for all $\alpha\in [\alc,\widetilde{\alpha}]$, which implies that  $p_{\alpha,f(\alpha)}(i\mu)=0$ for all  $\alpha\in [\alc,\widetilde{\alpha}]$. Hence, there exists an $\epsilon'>0$ such that $r_m(\alc+\epsilon,f(\alc+\epsilon))=i\mu$, $m\neq n$, for $\epsilon\in (0,\epsilon')$ and continuity implies $r_m(\alc,\bec)=i\mu$, which contradicts  the assumption that $i\mu$ is a simple root.

Suppose $c+d\mu+\mu^2<0$ and that $i\mu$ is a double root of $p_{\alc,\bec}$ for $\bec<\beta_1$. Then Lemma \ref{3corhighlow} implies that $r_n(\widetilde{\alpha},\beta_1)=i\mu$ for some $\widetilde{\alpha}>\alc$. Due to continuity of $r_n(\alpha,\beta_1)$ in $\alpha$ it follows that there is some interval $J:=(i(\mu-\epsilon),i(\mu+\epsilon))$ such that $J\subset W_{[\alc+\gamma,\infty)\times\overline{W(B)}}(T)$ for $\widetilde{\alpha}-\alc>\gamma>0$ small enough. Since $i\mu$ is a critical point it follows that there is $m\neq n$ such that $r_m(\alpha,\bec)\notin J$ for $m\neq n$ and $\alpha>\alc$, but  $r_m(\alc,\bec)= i\mu$. Symmetry with respect to the imaginary axis and continuity then yields that $i\mu$ is a root of $p_{\alc,\bec}$ of order at least $3$. Hence we have a contradiction and conclude that $\bec=\beta_1$. A similar reasoning show that $\bec=\beta_0$ if $i\mu$ is a double root and $c+d\mu+\mu^2> 0$.
 
 Assume that $i\mu$ is a critical point  and that $i\mu$ is a root of $p_{\alc,\bec}$ of order at least $3$, then Lemma \ref{3en3root} implies that only two such cases can occur. Further, assume that  $c+d\mu+\mu^2<0$  (if $c+d\mu+\mu^2>0$ a similar proof can be used) and $\bec=\beta_+<\beta_1$, $\alc=\alpha_+$, $\mu=\mu_+\neq\mu_-$. 
 
From   \cite[Proposition 2.6]{ATEN} it follows that if $\omega\in \C$  satisfies the conditions
    \[
   	\alp(\omega):=\frac{(2\omega_\Im+d)|\omega|^4}{d|\omega|^2+2c\omega_{\Im}}\in \overline{W(A)},
    \]
    and
    \[
    \bet(\omega):=\frac{-2\omega_{\Im}\left((-\omega_{\Re}^2+\omega_{\Im}^2+d\omega_{\Im}+c)^2+\omega_{\Re}^2(2\omega_{\Im}+d)^2\right)}{d|\omega|^2+2c\omega_{\Im}}\in \overline{W(B)},
    \]
then $\omega\in W_\Omega(T)$. A straight forward computation shows that $i\mu_+$ is a minimum of $\alp$ and $\bet$ along the imaginary axis. This implies that if $\epsilon>0$ is chosen small enough then $\alp(\mu_++\epsilon)>\alpha_+$ and  $\bet(\mu_++\epsilon)\in (\beta_0,\beta_1)$. From continuity it then follows that there are some constants $\epsilon_2>0$ and  $\gamma>0$ such that the open ball $\mathcal{B}(i(\mu_++\epsilon),\epsilon_2)\subset W_{[\alc+\gamma,\infty)\times\overline{W(B)}}(T)$. 

Since $\alp(i\mu_+)=\alpha_+$, $\bet(i\mu_+)=\beta_+$, and $\epsilon>0$ can be chosen arbitrarily small, it follows that for some $n\in \{1,2,3,4\}$ and $\gamma>0$ there exists a continuous function $g: [\alpha_+,\alpha_++\gamma]\rightarrow [\beta_0,\beta_1]$ such that $g(\alpha_+)=\beta_+$, and $r_n(\alpha,g(\alpha))\in \C\setminus i\R$ for $\alpha\in(\alpha_+,\alpha_++\gamma]$. 
Symmetry with respect to the imaginary axis then implies that $r_n(\alpha_+,\beta_+)=r_m(\alpha_+,\beta_+)=i\mu_+$ for some $n\neq m$, where $r_m$ and $r_n$ for $\gamma>0$ small enough are in the same component of $W_{[\alpha_++\gamma,\infty)\times\overline{W(B)}}(T)$.  Further, $\beta<\beta_1$ implies that for  $\epsilon,\gamma>0$ small enough the interval $J:=(i(\mu_+-\epsilon),i(\mu_++\epsilon))$ is also in the same component of $W_{[\alpha_++\gamma,\infty)\times\overline{W(B)}}(T)$.
 Since  $i\mu_+$ is a critical point and the roots depend continuously on the parameters one of the roots $r_l(\alpha,\beta)$, $l\in \{1,2,3,4\}\setminus\{m,n\}$ has the properties $r_l(\alpha,\beta)\notin J$ for all $\alpha>\alpha_+$, $\beta\in \overline{W(B)}$, and $r_l(\alpha_+,\beta_+)=i\mu_+$. Hence, continuity implies that $r_l(\alpha,\bec)\in \C\setminus i\R$  for $\alpha=\alpha_++\epsilon$ with $\epsilon>0$ small enough. Symmetry with respect to the imaginary axis then implies that $i\mu_+$ is a quadruple root of $p_{\alpha_+,\beta_+}$. But then is $i\mu_+=i\mu_-$, which contradicts our initial assumption. 
The proof for the case when $c+d\mu+\mu^2>0$ is analogous except that  $i\mu$ is a local maximum of $\bet$, and the result has to be proven for $\beta>\beta_0$ instead of $\beta<\beta_1$.
\end{proof}

Lemma \ref{3lemdoui} narrows down the possible critical points to certain multiple roots of the polynomial $p_{\alc,\beta}$.
\begin{lem}\label{lemdoubleroot}
Assume that  $i\mu$ for $\mu\in \R$  is a double root of the polynomial $p_{\alpha,\beta}$ in \eqref{3pabdef}. Then $\alpha=0$ or $\alpha$ is  a root of the polynomial
\begin{equation}\label{3en2root}
q(\alpha)=\sum_{i=0}^4 q_i\alpha^i,
\end{equation}
where
\[	
\begin{array}{l}\label{eq:poly}
	q_0=4c(\beta  + c)^3( 4(\beta  + c)-d^2 ),\\
q_1=64(\beta  - c)c(\beta  + c)^2 - 4(\beta ^3 + 26\beta ^2c + 13\beta c^2 - 12c^3)d^2 + (\beta ^2 + 20\beta c - 8c^2)d^4,\\
q_2=4(24c^3 - 22c^2d^2 + 8cd^4 - d^6 + 3\beta ^2(8c - d^2) + \beta (-16c^2 - 13cd^2 + 5d^4)),\\
q_3=4(\beta (16c - 3d^2) - 2(8c^2 - 6cd^2 + d^4)),\\
q_4=4(4c - d^2).
\end{array}
\]
Unless $4c>d^2/4=\beta$ the converse statement also holds. Moreover, if $d^2/4=\beta$ then  $p_{c,\beta}$ has the double roots 
\[
	\omega=\pm \sqrt{c-\frac{d^2}{16}}-i\frac{d}{4}.
\]
\end{lem}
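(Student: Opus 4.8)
The statement to prove is \textbf{Lemma \ref{lemdoubleroot}}: a characterization of when $i\mu$ ($\mu\in\mathbb R$) is a double root of $p_{\alpha,\beta}$, together with the exceptional analysis in the case $\beta=d^2/4$. The plan is to treat this as an elimination-theory problem. Set $\omega=i\mu$ in \eqref{3pabdef}, so
\[
 p_{\alpha,\beta}(i\mu)=(\alpha+\mu^2)(c+d\mu+\mu^2)+\mu^2\beta .
\]
Write $p_{\alpha,\beta}(\omega)$ as a polynomial of degree $4$ in $\omega$; ``$i\mu$ is a double root'' means the system $p_{\alpha,\beta}(i\mu)=0$, $p_{\alpha,\beta}'(i\mu)=0$ has a real solution $\mu$. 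Since both equations are polynomial in $\mu$ with real coefficients (using that the odd-degree coefficients of $p_{\alpha,\beta}$ are purely imaginary and the even-degree ones real), the resultant $\mathrm{Res}_\mu\bigl(p_{\alpha,\beta}(i\mu),\,p_{\alpha,\beta}'(i\mu)\bigr)$ is a polynomial in $\alpha$ and $\beta$ that must vanish. First I would compute this resultant; I expect it to factor as $\alpha\cdot q(\alpha)$ (up to a nonzero constant and possibly a factor depending only on $c,d,\beta$ that is handled by the stated exception), which identifies $q$ in \eqref{3en2root} and gives the forward implication.

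\textbf{The converse.} For the converse one must show that every root $\alpha$ of $\alpha\, q(\alpha)=0$ actually comes from a genuine \emph{real} $\mu$ with $i\mu$ a double root, not a spurious complex one introduced by the resultant. The case $\alpha=0$ is immediate: $p_{0,\beta}(\omega)=-\omega^2(c+d\omega+\omega^2)-\omega^2\beta=-\omega^2(c+\beta+d\omega+\omega^2)$ has $\omega=0$ as a double root. For a root $\alpha$ of $q$, from $p_{\alpha,\beta}(i\mu)=0$ one solves for $\mu$; the point is that when $\mu$ so obtained is complex the corresponding $\alpha$ is a spurious solution. I would argue that the only way the resultant picks up a non-real $\mu$ is the degenerate configuration $4c>d^2$ with $\beta=d^2/4$, where the quadratic governing $\mu$ becomes special — this is exactly the stated exception ``Unless $4c>d^2/4=\beta$''. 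So the converse is proved by checking that, outside this configuration, the $\mu$ recovered from $p_{\alpha,\beta}(i\mu)=0$ and $p_{\alpha,\beta}'(i\mu)=0$ is forced to be real.

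\textbf{The exceptional case $\beta=d^2/4$.} Here one substitutes $\beta=d^2/4$ directly. Then
\[
 p_{\alpha,d^2/4}(\omega)=(\alpha-\omega^2)(c-i d\omega-\omega^2)-\tfrac{d^2}{4}\omega^2 .
\]
Looking for a double root, I would guess (motivated by Lemma \ref{3lemdou} and Corollary \ref{cortripleroot}) that $\alpha=c$ works, and then verify by factoring: with $\alpha=c$ one expects
\[
 p_{c,d^2/4}(\omega)=-\Bigl(\omega^2+i\tfrac{d}{2}\omega-\bigl(c-\tfrac{d^2}{16}\bigr)\Bigr)^2
\]
(up to sign), whose double roots are the two solutions of $\omega^2+i\frac d2\omega-(c-\frac{d^2}{16})=0$, namely $\omega=\pm\sqrt{c-d^2/16}-i d/4$. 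This is a direct algebraic identity to confirm. The condition $4c>d^2$ ensures $\delta_\pm$ are genuine (the pole is first order, $d\neq2\sqrt c$), and one notes these double roots are real multiples of $i$ only in degenerate subcases — consistent with the lemma's phrasing.

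\textbf{Main obstacle.} The technical heart is the resultant computation and, more delicately, pinning down \emph{exactly} which factor of the resultant corresponds to spurious complex $\mu$ so that the converse holds precisely ``unless $4c>d^2/4=\beta$''. The resultant of two generic quartics/cubics in $\mu$ with parameters $\alpha,\beta,c,d$ is a large polynomial; organizing it so that the factor $\alpha\cdot q(\alpha)$ is cleanly separated, and certifying that the remaining factor is nonvanishing except in the stated exceptional locus, is where the bookkeeping is heaviest. I would carry this out with a computer algebra check and then present only the resulting identities $q_0,\dots,q_4$ and the factorization of $p_{c,d^2/4}$, since the intermediate manipulations are routine but bulky.
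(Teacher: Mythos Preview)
Your approach is essentially the same as the paper's: the paper's proof is a one-line ``the ansatz that $i\mu$ is a double root of $p_{\alpha,\beta}$ gives a system of equations in $\alpha$ that can be written in the form \eqref{3en2root}'', and your resultant computation is exactly the standard way to carry out that elimination. Two small slips to fix: the exceptional condition in the lemma is $4c>d^2/4$ (i.e.\ $c>d^2/16$, the threshold at which the double roots $\pm\sqrt{c-d^2/16}-id/4$ leave the imaginary axis), not $4c>d^2$ as you wrote; and the correct factorization is $p_{c,d^2/4}(\omega)=\bigl(\omega^{2}+\tfrac{id}{2}\omega-c\bigr)^{2}$, with constant term $-c$ and no overall minus sign.
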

\begin{proof} 
The ansatz that $i\mu$ is a double root of $p_{\alpha,\beta}$ gives a system of equations in $\alpha$ that can be written in the form \eqref{3en2root}.
\end{proof}

\begin{lem}\label{3corhighc}\label{3cormidc}\label{3corlowc}\label{3corc}
Let $q$ and $\beta_\pm$ be defined as in \eqref{3en2root} and \eqref{3en3root}, respectively. 
\begin{itemize}
\item[(\rm i)]  
If $c>d^2/4$, then $q$ has two real roots that in the case $\beta\geq \beta_-$ correspond to purely imaginary double roots of $p_{\alpha,\beta}$, and $q$ has no real roots that correspond to purely imaginary double roots of $p_{\alpha,\beta}$ if $0\leq\beta< \beta_-$. 
\item[(\rm ii)] 
If $d^2/16<c< d^2/4$, then $q$ has two real roots  that correspond to purely imaginary double roots  of $p_{\alpha,\beta}$. 
\item[(\rm iii)] 
Assume $c\leq d^2/16$, if $\beta_+\leq\beta\leq \beta_-$ then $q$ has four real roots, and if $\beta<\beta_+$ or $\beta> \beta_-$ then $q$ has two roots.
\end{itemize}
\end{lem}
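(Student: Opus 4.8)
The plan is to analyze the quartic polynomial $q(\alpha)$ in \eqref{3en2root} by treating it as a polynomial in $\alpha$ whose coefficients depend on the parameters $c,d,\beta$, and to separate out which of its real roots actually correspond to \emph{purely imaginary} double roots of $p_{\alpha,\beta}$. First I would recall from Lemma \ref{lemdoubleroot} that the real roots of $q$ (together with $\alpha=0$) are exactly the $\alpha$-values for which $p_{\alpha,\beta}$ has a double root of the form $i\mu$, $\mu\in\R$, with the one caveat that when $4c>d^2/4=\beta$ the converse may fail. So the first step is to set up the sign count of the real roots of $q$ as a function of the sign of $4c-d^2$ (the leading coefficient $q_4=4(4c-d^2)$), and to track how this count changes across the critical thresholds $c=d^2/16$, $c=d^2/4$, and $\beta=\beta_\pm$. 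I would use the discriminant of $q$, or more efficiently a Sturm sequence / sign-change argument on the coefficients $q_0,\dots,q_4$ evaluated at the relevant parameter regimes, rather than solving the quartic explicitly.

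The key organizing observation is that $\beta_\pm$ (from Lemma \ref{lemtripleroot}) are precisely the values of $\beta$ at which a double root of $p_{\alpha,\beta}$ on the imaginary axis upgrades to a triple root — in other words, at $\beta=\beta_\pm$ two real roots of $q$ coalesce. Hence $\beta=\beta_\pm$ are the branch points of the root-count of $q$, and the three cases (i)–(iii) are exactly the three parameter regimes $c>d^2/4$, $d^2/16<c<d^2/4$, $c\le d^2/16$ in which the ordering and reality of $\beta_+,0,\beta_-$ relative to the admissible range $[\beta_0,\beta_1]=\overline{W(B)}$ (here we only need $\beta\ge 0$) differ; Corollary \ref{cortripleroot} supplies exactly the needed comparisons $\beta_\pm$ versus $d^2/4$ and $0$. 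So the scheme is: (a) in case (i), $q_4>0$, and one checks via the discriminant that $q$ has exactly two simple real roots for all $\beta\ge\beta_-$ and no real root corresponding to an imaginary double root for $0\le\beta<\beta_-$ — here the caveat in Lemma \ref{lemdoubleroot} is what forces the "no real roots that correspond to purely imaginary double roots" phrasing, i.e. $q$ may still have real roots but they give non-imaginary double roots; (b) in case (ii), $q_4<0$, and a sign analysis of $q$ at $\alpha\to\pm\infty$ together with evaluation at $\alpha=0$ (where $q_0=4c(\beta+c)^3(4(\beta+c)-d^2)$) pins down exactly two real roots, which here automatically correspond to imaginary double roots since the caveat is vacuous ($4c<d^2$); (c) in case (iii), $q_4>0$ again but now $\beta_+$ can lie in $[0,\infty)$, so the count jumps from $4$ to $2$ as $\beta$ crosses $\beta_\pm$, matching the two-vs-four dichotomy.

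I would carry the steps out in this order: (1) normalize by scaling so one may assume $d=1$ (or $d=2$) to reduce the number of free parameters, noting $d>0$; (2) compute $q_0$ and the leading coefficient $q_4$ and read off their signs in each regime; (3) verify that at $\beta=\beta_\pm$ the quartic $q$ and its derivative $q'$ have a common root (this is forced by the triple-root condition and can be checked by substituting \eqref{3en3root} into \eqref{3en2root}, or more cleanly by a resultant computation $\mathrm{Res}_\mu$ of the triple-root system), so that $\beta_\pm$ are genuinely the values where two real roots merge; (4) use the intermediate value theorem on $q$, evaluating at $\alpha=0$ and at a few natural values (e.g. $\alpha=c$, which by the last clause of Lemma \ref{lemdoubleroot} is a double-root locus when $\beta=d^2/4$, and $\alpha=\beta_\pm$ of \eqref{3en3root}), to locate the sign changes and hence count the real roots; (5) in case (i) specifically, separate the real roots of $q$ that correspond to $\mu\in\R$ from those corresponding to $\mu\notin\R$ by plugging the root back into the double-root relation $\mu^2 = $ (the expression from solving $p_{\alpha,\beta}(\omega)=p_{\alpha,\beta}'(\omega)=0$) and checking positivity of $\mu^2$. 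I expect the main obstacle to be step (4)–(5) in case (i): disentangling which real roots of $q$ give \emph{imaginary} double roots versus complex ones — the quartic $q$ was derived under an ansatz $\omega=i\mu$, but because of the exceptional regime $4c>d^2/4=\beta$ flagged in Lemma \ref{lemdoubleroot}, $q$ may have spurious real roots whose corresponding $\mu^2$ is negative, and ruling these out (or rather, showing exactly when they appear) is the delicate bookkeeping that distinguishes the $\beta\ge\beta_-$ and $0\le\beta<\beta_-$ subcases; the algebra is heavy but elementary, and reduces to sign conditions on explicit rational expressions in $c,d,\beta$.
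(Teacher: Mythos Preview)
Your plan is correct in outline and would eventually work, but it is considerably more laborious than the paper's argument, and your emphasis is inverted at one crucial point. You mention the discriminant of $q$ as one option before pivoting to what you call the ``more efficient'' Sturm/sign-change route; in fact the paper's entire proof rests on computing the discriminant explicitly and observing that it \emph{factors completely}:
\[
\Delta_q=-2^4\,3^9\,d^{16}\,(-4\beta+d^2)^2(\beta-\beta_+)^3(\beta-\beta_-)^3.
\]
Once this is in hand, the sign of $\Delta_q$ is read off directly in each regime (negative gives exactly two real roots; positive gives zero or four), and the zero-vs-four ambiguity is resolved by evaluating at a single convenient $\beta$ (e.g.\ $\beta=0$ in case (i), $\beta=d^2/4$ in case (iii)) and invoking continuity in $\beta$. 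Your steps (3)--(4), checking that $q$ and $q'$ share a root at $\beta=\beta_\pm$ and then tracking sign changes via the intermediate value theorem at several test values of $\alpha$, amount to re-deriving this factorization by hand and are unnecessary once $\Delta_q$ is known.

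Your step (5) also over-complicates case (i). You anticipate having to sort real roots of $q$ into those with $\mu^2>0$ versus spurious ones with $\mu^2<0$. The paper sidesteps this entirely: for $c>d^2/4$ and $0\le\beta<\beta_-$ one has $\Delta_q>0$, and since at $\beta=0$ the polynomial $q$ has \emph{no} real roots (the factorization $p_{\alpha,0}(\omega)=(\alpha-\omega^2)(c-id\omega-\omega^2)$ shows no purely imaginary double root exists for $\alpha\neq 0$), continuity gives zero real roots throughout that subinterval. There is nothing to disentangle. The only place the caveat from Lemma~\ref{lemdoubleroot} enters is the isolated value $\beta=d^2/4$, where $\alpha=c$ is a double root of $q$ that fails to correspond to an imaginary double root of $p_{\alpha,\beta}$ when $c>d^2/16$; the paper notes this once at the outset and it causes no further trouble.
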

\begin{proof}
The  discriminant $\Delta_q$ of \eqref{3en2root} is
\begin{equation}\label{3disclem}
	\Delta_q=-2^4 3^9d^{16} (-4 \beta + d^2)^2 (\beta - \beta_+)^3 (\beta - \beta_-)^3,
\end{equation}
where $\beta_\pm$ is defined as \eqref{3en3root}. If $\beta=d^2/4$ then $\alpha=c$ is a double root but Lemma \ref{lemdoubleroot} shows that this root only corresponds to purely imaginary double roots of $p_{\alpha,\beta}$ if $c\leq d^2/16$.

{(\rm i)}
If $\beta>\beta_-$ then $\Delta_q<0$ and thus there are two real roots. For $\beta=0$ there is no real solution and since $\Delta_q>0$ for $\beta\in[0,\beta_-)$ the remaining statements follow by continuity.

{(\rm ii)}
For $\beta\neq d^2/4$ then $\Delta_q<0$ and for $\beta=d^2/4$ the roots can be computed explicitly.

{(\rm iii)}
For $c=0$ it holds that 
\[
	\beta_+=0<\frac{d^2}{4}<\frac{8}{27}d^2=\beta_-,
\]
and since $\beta_\pm\neq d^2/4$  for $c<d^2/16$ it follows by continuity that $\beta_+\leq d^2/4\leq \beta_-$, with equality if and only if $c=d^2/16$. Hence  $\Delta_q<0$ for $\beta\notin [\beta_+,\beta_-]$, and we conclude that there is two real roots. If $\beta=d^2/4$ the four real roots are
\[
	\alpha=c,\quad \alpha=\frac{32 c^2 - 56 c d^2 + 11 d^4 \pm d(5 d^2-16 c )^{\frac{3}{2}}}{8 (4 c - d^2)},
\]
where $\alpha=c$ is a double root. By continuity and that $\Delta_q>0$ for $\beta\in (\beta_+,\beta_-)\setminus \{d^2/4\}$ we conclude that there are four real roots.
\end{proof}

We now present sufficient conditions for accumulation of eigenvalues to the poles of $T$. The results are for the case  $d<2\sqrt{c}$ presented in Theorem \ref{3lemallB}, and for the case $d>2\sqrt{c}$ presented in Theorem \ref{3lemallBi}. 
\begin{thm}\label{3lemallB}
Let $T$ denote the operator function \eqref{tdef}. Assume that $d<2\sqrt{c}$ and let $\alpha_-$, $\beta_-$ be the constants \eqref{3en3root}. Moreover,  $\alpha_1$ is the largest real root of the polynomial \eqref{3en2root}, with $\beta=\beta_0$ and $\alpha_\infty$ denotes any real number. Further assume that $\overline{W(A)}\subset(\alc,\infty)$, where $\alc$ is given by: 
\begin{equation}\label{3Atable}
	\begin{array}{c c l | c  }
	&&& \alc\\
	\hline\rule{0pt}{4ex}  
	 \dfrac{d^2}{4}\in \overline{W(B)}&&&c\rule[-2.2ex]{0pt}{0pt}\\
	 \hline\rule{0pt}{4ex}  
	\beta_0> \dfrac{d^2}{4}&&&0\rule[-2.2ex]{0pt}{0pt}\\
	 \hline\rule{0pt}{4ex}  
	  \beta_1<\dfrac{d^2}{4}&\&& \beta_-\in \overline{W(B)}& \alpha_-\rule[-2.2ex]{0pt}{0pt}\\
	 \hline\rule{0pt}{4ex}  
	 \beta_1<\dfrac{d^2}{4}&\&& \beta_0>\beta_- & \alpha_1\rule[-2.2ex]{0pt}{0pt} \\
	   \hline\rule{0pt}{4ex}  
	 \ \beta_1<\dfrac{d^2}{4}&\&&\beta_1< \beta_-&\alpha_\infty\\
	\end{array}\ .
\end{equation}
Then there is a branch of eigenvalues accumulating at $\delta_\pm$ if and only if $\Hs\bot\ker B$ is infinite dimensional. Set $\Hs_0:=\ker (A-\alc)^{-\frac{1}{2}}B(A-\alc)^{-\frac{1}{2}}$ and let $\Hs_1:=\Hs\bot\Hs_0$ denote the Hilbert spaces with the inner products $\ip{\cdot}{\cdot}_{\Hs_i}=\ip{V_i\cdot}{V_i\cdot}$, where $V_i$ is defined in \eqref{1partis}. Let $\{u_j\}$ denote the set of eigenvectors and associated vectors corresponding to the branch of accumulating eigenvalues of $T$. Then the set $\{V_1^*(A-\alc)^\frac{1}{2}u_j\}$ is complete and minimal in $\Hs_1$.
\end{thm}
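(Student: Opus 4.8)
The plan is to reduce the statement to an application of Theorem~\ref{main} via the operator polynomial $P^\pm$ of Lemma~\ref{ratpol}, once we have verified that $T$ admits a spectral root at each pole $\delta_\pm$. The key bridge is Lemma~\ref{mobroot}: to invoke it we must produce a simply connected bounded open set $\Gh_\pm$ with $\delta_\pm\in\Gh_\pm$, $\delta_\mp\notin\Gh_\pm$, and $\partial\Gh_\pm\cap W_\Omega(T)=\emptyset$. Since $d<2\sqrt c$ forces $\theta$ real and $\delta_\pm=\pm\theta-id/2$ lie off the imaginary axis, I would first argue that under $\overline{W(A)}\subset(\alc,\infty)$ the enclosure $W_\Omega(T)$ splits into four pairwise disjoint bounded components, one around each pole and two "outer" ones; the separating set $\Gh_\pm$ is then any small disk around $\delta_\pm$ inside its component. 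The analysis of exactly when this splitting occurs is precisely what the critical-point machinery of Definition~\ref{3mergdef} through Lemma~\ref{3corc} was built for: by Lemma~\ref{3lemdou} and Lemma~\ref{3lemdoui}, any obstruction to separation is a critical point that is either a multiple root of $p_{\alc,\beta_0}$ or $p_{\alc,\beta_1}$, or the triple root $i\mu_-$ of $p_{\alpha_-,\beta_-}$. Running through the five cases of the table \eqref{3Atable}, I would show that when $\overline{W(A)}\subset(\alc,\infty)$ with $\alc$ as prescribed, no such critical point lies on the relevant part of $i\R$ (or off it, handled by Lemma~\ref{3lemdou}), so the component containing $\delta_\pm$ has not merged with any other component; hence the hypotheses of Lemma~\ref{mobroot} hold and $P^\pm$ has a spectral root on $\G=\{\omega-\delta_\pm:\omega\in\Gh_\pm\}$ with $0\in\G$.

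Next I would check that $P^\pm$ meets the structural hypotheses of Theorem~\ref{main} with $k=1$. From \eqref{Ai02}, $P_0^\pm=H^\pm=\pm\frac{\delta_\pm^2}{2\theta}(A-\alc)^{-\frac12}B(A-\alc)^{-\frac12}$; since $B$ is bounded selfadjoint with $\inf W(B)\ge0$ and $(A-\alc)^{-1}\in S_p(\Hs)$, the operator $H^\pm$ is normal, lies in $S_p(\Hs)$, and has spectrum on the single ray $\{\pm\frac{\delta_\pm^2}{2\theta}t:t\ge0\}$ from the origin --- in particular on a finite number of rays. Writing $H=H^\pm$ in the notation of \eqref{Adef2} with $K=0$, we have $\Hs_0=\ker H^\pm=\ker(A-\alc)^{-\frac12}B(A-\alc)^{-\frac12}$ and $\Hs_1=\Hs\ominus\Hs_0$; note $\Hs_1$ is infinite dimensional exactly when $\Hs\ominus\ker B$ is, since $(A-\alc)^{-\frac12}$ is injective. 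It remains to verify invertibility of $I_{\Hs_1}+K_1$ with $K_1$ from \eqref{eq:Lem32}. Here I would quote Lemma~\ref{3invlem}: because $d<2\sqrt c$, the Schur-type computation there shows $V_0^*P_1^\pm V_0$ is invertible, and then Lemma~\ref{2schurlem} (whose hypotheses $K=0$ and $\ker H\subset\ker P_i$ for $1\le i\le k-1$ are vacuous/trivial when $k=1$) yields that $I_{\Hs_1}+K_1$ is invertible.

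With a spectral root in hand and all hypotheses of Theorem~\ref{main} verified, that theorem gives completeness and minimality in $\Hs_1$ of the eigenvectors and associated vectors of $P^\pm$ corresponding to eigenvalues in $\G$; Theorem~\ref{main2} with $k=1$ gives that $0$ is an accumulation point of a branch of eigenvalues of $P^\pm$ precisely when $\dim\Hs_1=\infty$, i.e.\ when $\Hs\ominus\ker B$ is infinite dimensional, and conversely that there are only finitely many eigenvalues in $\G\setminus\{0\}$ otherwise. Finally I would transport these conclusions from $P^\pm$ back to $T$ via Corollary~\ref{ratpolcormult} (and Lemma~\ref{3simob}): eigenvalues $\lambda$ of $P^\pm$ in $\G\setminus\{0\}$ correspond to eigenvalues $\omega=\lambda+\delta_\pm$ of $T$ near $\delta_\pm$, and a Jordan chain $\{u_j\}$ of $P^\pm$ corresponds to the Jordan chain $\{(A-\alc)^{-\frac12}u_j\}$ of $T$; so the accumulating branch for $T$ has eigenvector/associated-vector system $\{(A-\alc)^{-\frac12}u_j\}$, and applying $V_1^*(A-\alc)^{\frac12}$ recovers exactly the system $\{V_1^*(A-\alc)^{\frac12}u_j\}$ in $\Hs_1$ to which the completeness and minimality of Theorem~\ref{main} apply.

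The main obstacle is the first paragraph: carefully justifying, case by case through the table \eqref{3Atable}, that the prescribed lower bound on $\overline{W(A)}$ really does keep the component of $W_\Omega(T)$ containing $\delta_\pm$ disjoint from the other three. This is where the delicate geometry of the root curves $r_n(\alpha,\beta)$ on and near the imaginary axis enters --- the interplay between which of $\beta_0,\beta_1$ is "active" (Lemma~\ref{3corhighlow}), whether $d^2/4\in\overline{W(B)}$ (Lemma~\ref{3lemdou}), and the position of $\beta_-$ relative to $\overline{W(B)}$ --- and it is the only place where genuinely new estimates, rather than bookkeeping, are required.
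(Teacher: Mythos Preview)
Your proposal is essentially the paper's own argument: verify case-by-case via Lemmas~\ref{3lemdou}, \ref{3lemdoui}, and \ref{3corc} that the bound $\alc$ in \eqref{3Atable} keeps the component of $W_\Omega(T)$ containing $\delta_\pm$ separated, apply Lemma~\ref{mobroot} to obtain a spectral root of $P^\pm$, check the structure \eqref{Adef2} with $k=1$, $K=0$, use Lemmas~\ref{2schurlem} and~\ref{3invlem} for the invertibility of $I_{\Hs_1}+K_1$, and conclude with Theorems~\ref{main} and~\ref{main2}. Two small corrections: $\Gh_\pm$ must \emph{contain} the component around $\delta_\pm$ (otherwise $\partial\Gh_\pm$ meets $W_\Omega(T)$), not be a disk inside it; and since $d<2\sqrt c$ gives $c+d\mu+\mu^2=(\mu+d/2)^2+c-d^2/4>0$ for all real $\mu$, only the $\bec=\beta_0$ alternative of Lemma~\ref{3lemdoui}(i) is ever active, which is why the table depends on the position of $\beta_0$ (not $\beta_1$) relative to $\beta_-$.
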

\begin{proof}
Define $\Omega:=\overline{W(A)}\times \overline{W(B)}$ and let $p_{\alpha,\beta}$ and $W_\Omega(T)$ be defined as in \eqref{3pabdef} and in \eqref{3endef}, respectively.
We want to show that there is a simply connected bounded open set $\Gh_\pm\subset\C$ such that $\delta_\pm\in \Gh_\pm$, $\delta_\mp\notin \Gh_\pm$, and  $\partial\Gh_\pm\cap W_\Omega(T)=\emptyset$.

If the lower bound on $A$ is large enough, $W_\Omega(T)$ consists of four components, where $\delta_\pm$ is in one of the bounded components. Hence, $\Gh_\pm$ with the desired properties exists for some lower bound on $W(A)$. Now we want to show that this property holds if $\alc$ is given by \eqref{3Atable}. 

Assume $d^2/4\in \overline{W(B)}$,  then it follows from Lemma \ref{3lemdou} that there is a critical point that is not on the imaginary axis if and only  if $\alc=c$. Hence, if $\ac>c$ it follows that $\ac>0$ and thus there are no solutions on the imaginary axis, which together with Lemma \ref{3lemdou} implies that $W_\Omega(T)$ consists of four disjoint components. 

Assume that  $\beta_0>d^2/4$, then Lemma \ref{3lemdou} implies that there is no critical point that is not on the imaginary axis. Hence, if $\ac>0$ there is no purely imaginary solutions and thus  $W_\Omega(T)$ consists of four disjoint components. 

Assume that $d^2/4>\beta_1$, then it follows from \cite[Proposition 3.17]{ATEN} that  the components containing $\delta_+$ and $\delta_-$ are disjoint from the rest of $W_{[0,\infty)\times\overline{W(B)}}(T)$. Hence, $\omega=0$ is not a critical point for the components containing the poles and thus $\alc$ can be negative. 

If $\beta_-\in \overline{W(B)}$, then Lemma \ref{3lemdoui} implies that there can be a critical point $\omega$ at $\alpha_-$ and in that case $\omega$ is a double root of $p_{\alc,\beta_0}$. From Lemma \ref{3corc} {(\rm i)} it follows that, $p_{\alc,\beta_0}$ has no double roots if $\beta_0<\beta_-$. If $\beta_0=\beta_-$ then Lemma \ref{lemdoubleroot} implies that if $p_{\alc,\beta_0}$ has a double root then  $\alc=\alpha_-$. Hence, if $\ac>\alpha_-$ then the components containing $\delta_\pm$ are disjoint from the rest of $W_\Omega(T)$.

If $\beta_0>\beta_-$, then Lemma \ref{3lemdoui} implies that any critical point must be a double root of $p_{\alc,\beta_0}$. From Lemma \ref{3corc} {(\rm i)} it then follows that there are two such roots, $\alpha_1$ and $\alpha_2$, where $\alpha_1>\alpha_2$. Hence, 
if  $\ac>\alpha_1$ then the components containing $\delta_\pm$ are disjoint from the rest of $W_\Omega(T)$.

If $\beta_1<\beta_-$, Lemma \ref{3lemdoui} implies that any critical point must be a nonzero double root of $p_{\alc,\beta_0}$, but it follows from Lemma \ref{3corc} {(\rm i)} that there are no such roots. Hence, in this case any lower bound on $A$ is sufficient. 

We have now shown that for the lower bounds on $A$ given by \eqref{3Atable} the conditions of Lemma \ref{mobroot} is satisfied. Hence, it follows that $P_\pm$ has a spectral root around $0$. From definition of \eqref{pen} it can be seen that it has the form of \eqref{Adef2} with $k=1$ and $K=0$. From Lemma \ref{1lemext} it then follows that the linear operator function $C$, defined in the lemma, has the same spectrum as $P_\pm$ in $\C\setminus\{0\}$, where
\[
	C(\lambda):=\lambda+(I_{\Hs_1}+K_1)H_1,
\]
 $H_1=V_0^*(A-\alc)^{-\frac{1}{2}}B(A-\alc)^{-\frac{1}{2}}V_0$, and $K_1$ is compact. From $K=0$, Lemma \ref{2schurlem}, and Lemma \ref{3invlem} it follows that $I_{\Hs_1}+K_1$ is invertible. The result then follows from Theorem \ref{main}. 
\end{proof}
Figure \ref{3fig:disjp} depicts $W_\Omega(T)$ for cases where Theorem \ref{3lemallB} can be used to prove accumulation of complex eigenvalues. The dots denote $\delta_\pm$ and the dashed curve in each of the panels is the boundary of a bounded $\Gh$ satisfying $\delta_+\in \Gh$, $\delta_-\notin \Gh$, and  $\partial\Gh\cap W_\Omega(T)=\emptyset$.
\begin{center}
\begin{figure}
\includegraphics[width=12.5cm]{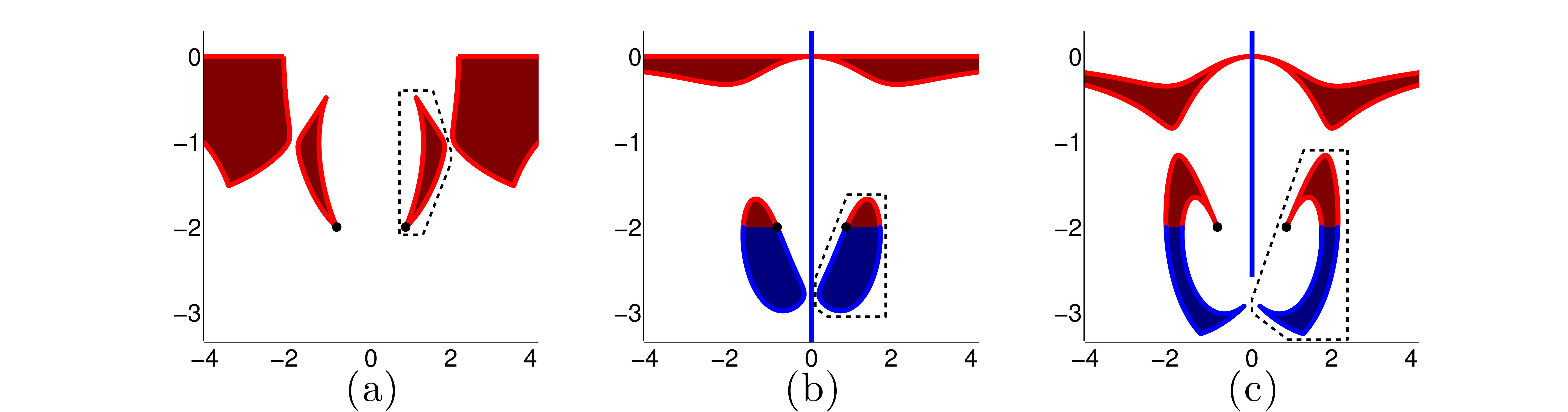}
\caption{Examples of $W_\Omega(T)$ in red and blue, with $c=19/4$ and $d=4$, where the dashed line separates a component from the rest of $W_\Omega(T)$. In (a) $d^2/4\in \overline{W(B)}$.  In (b) $\beta_1<\min(d^2/4,\beta_-)$.  In (c) $\beta_1<d^2/4$ and $\beta_0>\beta_-$.}\label{3fig:disjp}
\end{figure}
\end{center}
\begin{thm}\label{3lemallBi}
Let $T$ denote the operator function \eqref{tdef} and assume that $d>2\sqrt{c}$. Order the real roots  $\alpha_1,\alpha_2,\dots$ of \eqref{3en2root} with $\beta=\beta_1$ decreasingly and let $\alpha_\pm$ and $\beta_\pm$ denote the constants in \eqref{3en3root}. Further assume that $\overline{W(A)}\subset(\alc,\infty)$, where $\alc$ is given by:
\begin{equation}\label{3Aitable}
	\begin{array}{ l c c | c |c   }
	 &&&\alc \text{ for } \delta_- &\alc \text{ for}\ \delta_+\\
	\hline\rule{0pt}{4ex}  
	 \dfrac{d^2}{16}<c&\text{or}&\beta_1<\beta_+& \alpha_1 & \alpha_1^{\dagger}\rule[-2.2ex]{0pt}{0pt}\\ 
	 \hline\rule{0pt}{4ex}
	\dfrac{d^2}{16}\geq c& \& & \beta_+\leq\beta_1<\dfrac{d^2}{4}& \alpha_3 & \alpha_2\rule[-2.2ex]{0pt}{0pt}\\
	 \hline\rule{0pt}{4ex}
	\dfrac{d^2}{16}\geq c&\&  & \dfrac{d^2}{4}\leq\beta_1\leq\beta_-&\alpha_2& \alpha_3\rule[-2.2ex]{0pt}{0pt} \\
	 \hline\rule{0pt}{4ex}
	\dfrac{d^2}{16}\geq c& \&  &  \beta_0\leq\beta_-<\beta_1&\max(\alpha_1,\alpha_-)&\alpha_1\rule[-2.2ex]{0pt}{0pt}\\
	 \hline\rule{0pt}{4ex}
	\dfrac{d^2}{16}\geq c&\&  &\beta_-<\beta_0& \alpha_1^{\ddagger}& \alpha_1
	\end{array}\ .
\end{equation}
\small{$\dagger$ If $0\in \overline{W(B)}$ and $\beta_1<\beta_+$  then $\alc=\max(\alpha_1,\delta_+^2)$. 
\\
$\ddagger$ 
If $\beta_-<B$ and $c=0$ then $\alc$ can be any real number less than $\ac$.\\ \\ }
\normalsize
Then there is a branch of eigenvalues accumulating at $\delta_\pm$ if and only if $\Hs\bot\ker B$ is infinite dimensional. If $c=0$ then $\delta_+$ is a removable singularity of $T$ and thus there is no accumulation to $\delta_+$ in that case. Let $\Hs_0:=\ker (A-\alc)^{-\frac{1}{2}}B(A-\alc)^{-\frac{1}{2}}$ and $\Hs_1:=\Hs\bot\Hs_0$ denote the Hilbert spaces with the inner products $\ip{\cdot}{\cdot}_{\Hs_i}=\ip{V_i\cdot}{V_i\cdot}$ where $V_i$ is defined in \eqref{1partis}. Let $\{u_j\}$ denote the set of eigenvectors and associated vectors corresponding  to the branch of accumulating eigenvalues  of $T$, then the set 
$\{V_1^*(A-\alc)^\frac{1}{2}u_j\}$ is complete and minimal in $\Hs_1$.
\end{thm}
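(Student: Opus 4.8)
The plan is to follow the template of the proof of Theorem~\ref{3lemallB}, the genuinely new content being (a) a case analysis of the table \eqref{3Aitable} adapted to poles $\delta_\pm$ lying \emph{on} the imaginary axis, and (b) verifying that the tabulated $\alc$ is large enough for Lemma~\ref{3invlemi} (rather than Lemma~\ref{3invlem}). Set $\Omega:=\overline{W(A)}\times\overline{W(B)}$ and let $p_{\alpha,\beta}$ and $W_\Omega(T)$ be as in \eqref{3pabdef} and \eqref{3endef}. Since $d>2\sqrt c$ forces $\theta\neq0$, the two poles are distinct and simple, so by Lemma~\ref{mobroot} (with $m_\pm=1$) it suffices to produce, for each pole, a simply connected bounded open $\Gh_\pm$ with $\delta_\pm\in\Gh_\pm$, $\delta_\mp\notin\Gh_\pm$ and $\partial\Gh_\pm\cap W_\Omega(T)=\emptyset$; when $c=0$ one has $\delta_+=0$, a removable singularity, so only $\delta_-$ is treated. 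Such a $\Gh_\pm$ exists exactly when the component of $W_\Omega(T)$ containing (the limit set at) $\delta_\pm$ is bounded and separated from the remaining components, and unlike in the case $d<2\sqrt c$ one cannot discard $W_\Omega(T)\cap i\R$, which is precisely why the table is larger here.

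The separation is governed by the critical points of Definition~\ref{3mergdef}. By Lemma~\ref{3lemdou} an off-axis critical point forces $\alc=c$ with $d^2/4\in\overline{W(B)}$, and by Lemma~\ref{3lemdoui} a critical point on $i\R$ is either a multiple root of $p_{\alc,\beta_0}$ or of $p_{\alc,\beta_1}$ — the choice between $\beta_0$ and $\beta_1$ being dictated by the sign of $c+d\mu+\mu^2$ via Lemma~\ref{3corhighlow} — or the triple root $i\mu_-$ at $\alpha=\alpha_-$. I would then run through the five rows of \eqref{3Aitable}: for each, use Lemma~\ref{3corc} to count the real roots of $q$ (with the $\beta$ prescribed by Lemma~\ref{3corhighlow}) and Corollary~\ref{cortripleroot} to place $\beta_\pm$ relative to $d^2/4$ and $0$, thereby pinning down the largest $\alpha$ at which the $\delta_\pm$-component can still merge with a neighbouring component. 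Taking $\overline{W(A)}\subset(\alc,\infty)$ with $\alc$ the tabulated value — in particular $\alpha_1,\alpha_2,\alpha_3$ the decreasingly ordered real roots of $q$ with $\beta=\beta_1$, and $\alpha_-$ from \eqref{3en3root} — then kills every critical point above $\alc$, so the required $\Gh_\pm$ exists. The $\delta_-$/$\delta_+$ asymmetry in the table is forced by $c+d\mu+\mu^2$ having opposite signs on the two sides of the poles, so that $\beta_0$ and $\beta_1$ (hence the relevant ordered root of $q$) exchange roles; the footnote $\ddagger$ records that when $c=0$ and $\beta_-<\inf W(B)$ the $\delta_-$-component is already isolated for every $\alc$, by the same reasoning as in the proof of Theorem~\ref{3lemallB} (using \cite[Proposition 3.17]{ATEN}).

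With the $\Gh_\pm$ in hand, Lemma~\ref{mobroot} gives $P^\pm$ of \eqref{pen} a spectral root on $\G:=\{\omega-\delta_\pm:\omega\in\Gh_\pm\}$, $0\in\G$. By \eqref{Ai02}, $P^\pm$ has the form \eqref{Adef2} with $k=1$, $K=0$ and $H^\pm=\pm\frac{\delta_\pm^2}{2\theta}(A-\alc)^{-\frac12}B(A-\alc)^{-\frac12}$, which is normal, in $S_p(\Hs)$, and has spectrum on a single ray from the origin; Lemma~\ref{1lemext} produces the linearisation $C(\lambda)=\lambda+(I_{\Hs_1}+K_1)H_1$ with $H_1=V_1^*H^\pm V_1$, $\ker H_1=\{0\}$. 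To invoke Theorem~\ref{main} I must check that $I_{\Hs_1}+K_1$ is invertible: since $K=0$ this follows from Lemma~\ref{2schurlem} once $V_0^*P_1^\pm V_0$ is invertible, which is Lemma~\ref{3invlemi} under the hypothesis $\ac>\delta_\pm^2$. This is where the footnote $\dagger$ is used — in the one row where the tabulated $\alpha_1$ may fall below $\delta_+^2$ (necessarily when $0\in\overline{W(B)}$ and $\beta_1<\beta_+$) one replaces it by $\max(\alpha_1,\delta_+^2)$, and in every other row one checks $\alc\ge\delta_\pm^2$ directly from the table entries, so that $\ac>\alc\ge\delta_\pm^2$. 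Theorem~\ref{main} then gives completeness and minimality of the eigenvector and associated vector system of $-C_0=-(I_{\Hs_1}+K_1)H_1$ in $\Hs_1$, hence of $\{V_1^*(A-\alc)^{\frac12}u_j\}$ after transferring back through Corollary~\ref{ratpolcormult}/Lemma~\ref{ratpol}, while Theorem~\ref{main2} with $k=1$ yields the equivalence between accumulation at $\delta_\pm$ and $\dim\Hs_1=\dim(\Hs\bot\ker B)=\infty$.

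I expect the main obstacle to be the bookkeeping in the second paragraph: there are effectively ten sub-cases (five rows, two poles) together with the two footnotes, and for each one must verify both that the tabulated $\alc$ is large enough to eliminate every imaginary-axis critical point on the correct side — which requires keeping straight which of $\beta_0,\beta_1$ is active and how the ordered roots $\alpha_1,\alpha_2,\alpha_3$ of $q$ interleave with $\alpha_-$ — and that $\ac>\delta_\pm^2$. All the analytic ingredients (Lemmas~\ref{3lemdou}, \ref{3lemdoui}, \ref{3corc}, \ref{lemdoubleroot}, Corollary~\ref{cortripleroot}, and the numerical-range facts from \cite{ATEN}) are already available; the work is in assembling them consistently across the table.
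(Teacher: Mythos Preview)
Your overall architecture --- separate the pole component via the critical-point analysis, apply Lemma~\ref{mobroot} to obtain a spectral root of $P^\pm$, then combine Lemma~\ref{2schurlem} with Lemma~\ref{3invlemi} to secure invertibility of $I_{\Hs_1}+K_1$, and finish with Theorems~\ref{main} and~\ref{main2} --- is exactly the paper's. But your explanation of the $\delta_-/\delta_+$ asymmetry in table~\eqref{3Aitable} is wrong, and since you yourself flag ``keeping straight which of $\beta_0,\beta_1$ is active'' as the main obstacle, this would derail the case analysis. You say the two columns differ because $c+d\mu+\mu^2$ changes sign across the poles, so that $\beta_0$ and $\beta_1$ swap roles. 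In fact, when $B>0$ one first checks that $p_{\alpha,\beta}(i\mu)>0$ for $i\mu$ just \emph{outside} $i[\Im\delta_-,\Im\delta_+]$, so every critical point for a pole component lies strictly \emph{between} the poles on $i\R$, where $c+d\mu+\mu^2<0$ throughout. Lemma~\ref{3lemdoui} then forces the critical $\alc$ to be either a root of $q$ with $\beta=\beta_1$ or the triple-root value $\alpha_-$; the value $\beta_0$ never enters, which is why the theorem defines $\alpha_1,\alpha_2,\dots$ using $\beta=\beta_1$ only. The asymmetry between the two columns instead records which pole's component participates in the merge at each $\alpha_i$: the paper determines this by computing the ordering of the double roots $\mu_i$ of $p_{\alpha_i,\beta_1}$ relative to $\Im\delta_\pm$ at the special values $\beta_1\in\{\beta_+,\,d^2/4,\,\beta_-\}$ and interpolating by continuity in $\beta_1$ (using that $\alpha_2=\alpha_3$ forces $\beta_1=d^2/4$ via the discriminant~\eqref{3disclem}).

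Two smaller corrections. The $\ddagger$ case ($c=0$, $\beta_-<B$) is not handled through \cite[Proposition~3.17]{ATEN}; rather one uses that $\delta_+=0$ is removable, so the reduced $T$ has a single pole and no pole--pole merge can occur, and then checks directly that neither $\alpha_1=0$ nor $\alpha_2$ produces a merge with the $\delta_-$-component. For the hypothesis $\ac>\delta_\pm^2$ of Lemma~\ref{3invlemi}: since here $\delta_\pm\in i\R$ one has $\delta_\pm^2<0$, so the only question is whether $\alc$ can be negative. The paper does not simply read this off the table; for $\delta_+$ it argues, by tracking how the real roots of $q$ move as $\beta_1$ decreases past $\beta_-$ and $d^2/4-c$, that the tabulated $\alc$ is positive in every row with $d^2/16\ge c$ and $\beta_1\ge\beta_+$, while for $\delta_-$ the inequality is obtained from Lemma~\ref{3corhighlow}.
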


\begin{proof}
This proof follows the same pattern as the proof of Theorem \ref{3lemallB}: First we show that for $\alc$ given in \eqref{3Aitable}, the components of $W_\Omega(T)$ containing  $\delta_+$ and $\delta_-$, respectively, is disjoint from the rest of $W_\Omega(T)$.

Since for $A$  large enough there are four components and the components with the poles are two disjoint subsets of $i\R$, the critical points can be divided into two groups: either the components containing the poles merge, or the unbounded components   merge with the component containing $\delta_+$ or $\delta_-$. For the proof note that $\alpha_+\leq\alpha_-$ and $\beta_+\leq\beta_-$. 

Assume that $B>0$. For $i\mu\in i\R\setminus i[ \Im \delta_-,\Im \delta_+]$, close to $\delta_+$ or $\delta_-$, it holds that $p_{\alpha,\beta}(i\mu)>0$ for $(\alpha,\beta)\in\Omega$. Hence, critical points on $i\R$ must belong to the purely imaginary interval $i(\Im \delta_-,\Im\delta_+)$. Lemma \ref{3lemdoui}, then states that $i \mu$ is either a root of order at least $3$ of $p_{\alc,\beta_-}$ or a root of order at least $2$ of $p_{\alc,\beta_1}$. Hence, $\alpha_1,\alpha_2,\hdots$ and $\alpha_-$ are the only possible $\alc$, where components containing the poles can merge. 

 If $d^2/16<c$ or $B<\beta_+$, then Lemma \ref{lemtripleroot} implies that there is no triple root. Hence, if $\alc=\alpha_1$ then the components containing $\delta_\pm$ are disjoint from the rest of $W_\Omega(T)$.
 
 If $\beta_+\leq\beta_1\leq\beta_-$, then it follows from Corollary \ref{3corlowc} that  \eqref{3en2root} has  four real roots for $\beta=\beta_1$. 
 These roots are the only possible $\alc$ and we show below which of $\alpha_1,\alpha_2,\alpha_3,\alpha_4$ that corresponds to the merge. If  $d^2/16=c$, then $\beta_+=\beta_-=d^2/4$, $\alpha_1=\alpha_2=\alpha_3=c$ and there is nothing to prove. Hence, assume $d^2/16>c$.
If $\beta_1=d^2/4$ then it follows that $\alpha_2=\alpha_3=c$ and
 \[
 	\alpha_1=c+\frac{ 48 c d^2 - 11 d^4 + d(5 d^2-16 c )^{\frac{3}{2}}}{8 ( d^2-4c)},\quad \alpha_4=c+\frac{48 c d^2 - 11 d^4 - d(5 d^2-16 c )^{\frac{3}{2}}}{8 (d^2-4c)}.
 \]
 These are the values of $\alpha$ where $p_{\alpha,\beta_1}$ has a double root. By straight forward computations it can be seen that
\[
	\Im \delta_+>\mu_2> \mu_1> \mu_3>\Im \delta_->\mu_4,
\]
where $\mu_i$ is the double root of $p_{\alpha_i,\beta_1}$. Lemma \ref{3corhighlow} then  yields that 
\[
	i\mu_2,i\mu_3\notin W_{[\alpha_1,\infty)\times \overline{W(B)}}(T),
\]
and thus the components containing the poles does not merge if $\alc=\alpha_1$.  Lemma~\ref{3disclem} then yields that the merges with the components containing $\delta_\pm$ are $\alc=\alpha_2$ and $\alc=\alpha_3$. If $\beta_1=\beta_+$ then $\alpha_+$ is a double root of $q$ and the corresponding triple root of $p_{\alpha_+,\beta_1}$ is $i\mu_+$. By computing the remaining two roots of $q$ it follows that $\alpha_+=\alpha_1=\alpha_2$ are the largest solutions. Further it can be seen that the corresponding double roots $i\mu_3$ of $p_{\alpha_3,\beta_1}$ and $i\mu_4$ of $p_{\alpha_4,\beta_1}$ satisfies $\mu_3,\mu_4<\mu_+$. Hence, for $\beta_1=\beta_+$ the merge with $\delta_+$ is for $\alc=\alpha_2$ and  the merge with $\delta_-$ is for $\alc=\alpha_3$. Similar reasoning for $\beta_1=\beta_-$ shows that the merge with $\delta_-$ is for $\alc=\alpha_2$ and  the merge with  $\delta_+$ is for $\alc=\alpha_3$. Hence, for some $\beta_1\in(\beta_+,\beta_-)$, the values $\alc=\alpha_2=\alpha_3$ give a merge for the component containing $\delta_+$ and for the component containing $\delta_-$. It then follows that $q$ has a double root and thus from \eqref{3disclem} we obtain $\beta_1=d^2/4$.
 
If $ \beta_-\in W(B)$, $\alpha_1$ and $\alpha_2$ are the only two real solutions of \eqref{3en2root}. There are two different cases: either $\alpha_1\geq\alpha_-$ or $\alpha_1<\alpha_-$. If $\alpha_1\geq\alpha_-$, then there is nothing to show. However, if $\alpha_1<\alpha_-$ there will be a merge with $\alc=\alpha_-$ at some point on $i\R$. Due to continuity and the result for $\beta_+\leq\beta_1\leq\beta_-$ the merge is with the component containing $\delta_-$. Hence, in this case, for $\alc=\alpha_-$ and for $\alc=\alpha_1$, the components containing $\delta_-$ respectively $\delta_+$ are disjoint from the rest of $W_\Omega(T)$.

If $\beta_-\leq B$, then there is no triple root, and the result follows as in the case $B<\beta_+$. Furthermore, if $c=0$ then zero is a removable singularity of $T$ and thus
\[
	T(\omega)=A-\omega^2-\frac{\omega}{-id-\omega}B, \quad  \omega\in \C\setminus\{\delta_+,\delta_-\}.
\]
This reduced function has only one pole and thus no merge of  components containing poles. Hence, Lemma \ref{3corc} {(\rm iii)} implies that $\alc=\alpha_1=0$ cannot give a critical point. Furthermore, $-d=\Im\delta_-$ is the minimal imaginary part of the component containing $\delta_-$ and the double root of $p_{\alpha_2,\beta}$ is  $i\mu_2,$ where $\mu<-d$. Thus for $\alc=\alpha_2$ there is no merge with the component containing $\delta_-$. Since there is no other possible $\alc$ it follows that $W(A)$ bounded from below is a sufficient condition on $A$.

Assume that $0\in W(B)$ and $\alc<0$. Then the only property that has to be modified in the proof is that  $i[-\Im\sqrt{\ac},\Im \sqrt{\ac}]\subset W_\Omega(T)$. Hence, $\delta_+$  is connected to the unbounded component if $\delta_+^2\in W(A)$ and we must thus show that if $ d^2/16\geq c$ and $\beta_1\geq \beta_+$ the tabular \eqref{3Aitable} implies that  $\ac>\delta_+^2$. 
 
Assume that $\beta_1>\beta_-$, then since $\beta_->d^2/4$ it follows that $q_0>0$, $q_4<0$. Moreover, $q$ has two real roots and $\alpha_1>0$. Now we consider the polynomial $q$ as a continuous function in $\beta_1$ and observe what happens when we decrease $\beta_1$.  The number of non-positive roots can only increase when $q_0=0$ and when the number of real roots of $q$ increases. The former happens when $\beta_1=d^2/4-c$ while due to Lemma \ref{3corc}, the latter happens when $\beta_1=\beta_-$. However, for $\beta_1=\beta_-$, the double root of $q$ is $\alpha_->0$. Hence, there is at most one negative root of $q$ for $\beta_1>d^2/4-c$. Furthermore, since the root $0$ of $q$ is simple when $\beta_1=d^2/4-c$, there is at most two negative roots of $q$ for $\beta_1\leq d^2/4-c$. This implies that the lower bound on $\overline{W(A)}$ for $\delta_+$ given in \eqref{3Aitable} is positive if  $d^2/16\geq c$ and $\beta_1\geq \beta_+$ and thus $\alc>\delta_+^2$ in these cases.

The rest of the proof is now completely analogous to proof of Theorem \ref{3lemallB}, with the exception that Lemma \ref{3invlemi} is used instead of Lemma \ref{3invlem}. The additional condition that $A>\delta_\pm^2$ in the case when $\ker B\neq \{0\}$ is enforced by \eqref{3Aitable} for $\delta_+$ and a consequence of Lemma \ref{3corhighlow} for $\delta_-$.
\end{proof}
\begin{center}
\begin{figure}
\includegraphics[width=12.5cm]{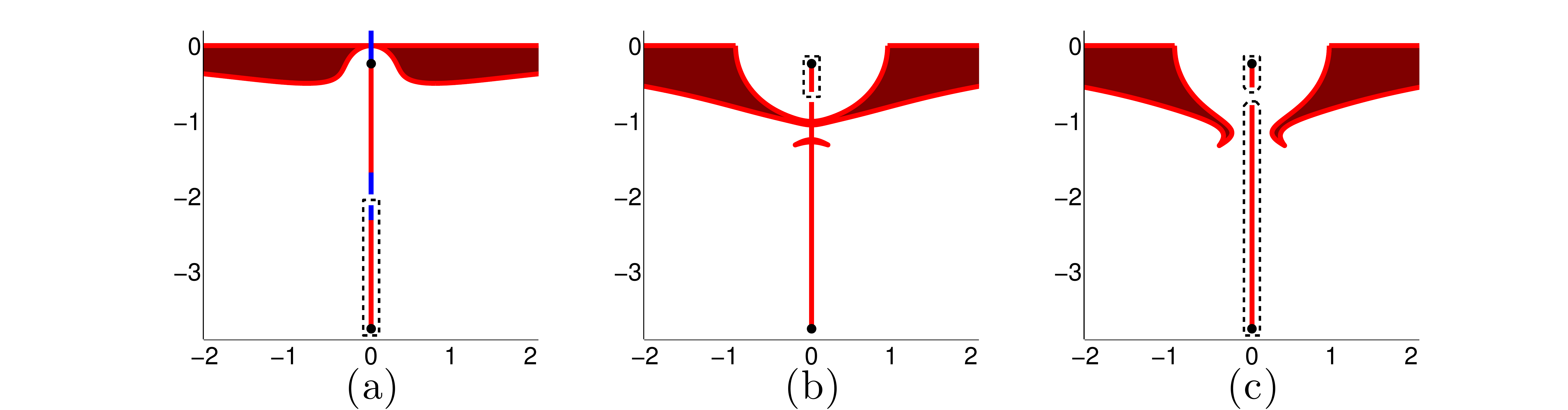}
\caption{Examples of $W_\Omega(T)$ in red and blue, with $c=9/10$ and $d=4$, where the dashed line separates a component from the rest of $W_\Omega(T)$. In (a) $\beta_1<\beta_+ $ and $0\in \overline{W(B)}$. In (b)   $d^2/4\leq\beta_1\leq\beta_-$. In (c) $\beta_-\in\overline{W(B)}$. }\label{3fig:disjm}
\end{figure}
\end{center}

The condition  $\ac>\delta_+^2$ is not implied by $\ac>\alpha_1$ in the case when $\beta_1<\beta_+$. This is illustrated in Figure \ref{3fig:disjm}.(a), where $\alpha_1<\alpha_0<\delta_+^2$ and only the component containing $\delta_-$ is disjoint. In Figure  \ref{3fig:disjm}.(b), the conditions   $\alpha_3<\alpha_0<\alpha_2$ are fulfilled,  which implies that the component containing  $\delta_+$ is disjoint. In Figure  \ref{3fig:disjm}.(c),  the conditions $\alpha_-<\alpha_1<\alpha_0$ are fulfilled, which implies that both of the components containing a pole are disjoint from the rest of $W_\Omega(T)$. In each panel, the dots denote $\delta_\pm$ and each dashed curve  is the boundary of a bounded set $\Gh_\pm$ satisfying $\delta_\pm\in \Gh_\pm$, $\delta_\mp\notin \Gh_\pm$, and  $\partial\Gh_\pm\cap W_\Omega(T)=\emptyset$. In Figure  \ref{3fig:disjm}.(c) both $\Gh_+$ and $\Gh_-$ are visualized.

\subsection{The case $d=2\sqrt{c}$}
In this section we will present sufficient conditions on $T$  that assure that Theorem \ref{main2} is applicable, and thus prove the existence of an infinite sequence of eigenvalue accumulating to $\delta_\pm=-id/2$. This is done similarly as in the case when $d\neq2\sqrt{c}$.

\begin{lem}\label{ratpolt}
Assume that $d=2\sqrt{c}$ and let $\widehat{P}$ denote the operator polynomial \eqref{transb}. Define the polynomial $P$ for the pole $-id/2$ as in \eqref{penmult}. Then 
\begin{equation}\label{pent}
P(\lambda):=\sum_{j=0}^4 \lambda^jP_j,\quad \lambda\in \C,
\end{equation}

where $H:=-\frac{d^2}{4} (A-\alc)^{-\frac{1}{2}}B(A-\alc)^{-\frac{1}{2}}$ and
\begin{equation}\label{Ai02c}
	\begin{array}{l}
	P_0:=H,\\
	P_1:= \dfrac{4i}{d}H,\\
	P_2:=I+\left(\alc+ \dfrac{d^2}{4}\right)(A-\alc)^{-1}-\dfrac{4}{d^2}H,\\
	P_3:=id(A-\alc)^{-1},\\
	P_4:=-(A-\alc)^{-1}.
	\end{array}
\end{equation}
\end{lem}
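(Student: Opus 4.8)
The plan is to obtain \eqref{pent}--\eqref{Ai02c} by direct substitution, so that --- exactly as in Lemma \ref{ratpol} --- the argument is pure bookkeeping once the correct normalizations have been identified. The first thing to record is the effect of the hypothesis $d=2\sqrt{c}$: then $\theta=\sqrt{c-d^2/4}=0$, the two poles $\delta_\pm$ collapse to the single point $\delta:=-id/2$, and the quadratic denominator factors as
\[
c-id\omega-\omega^2=-(\omega+id/2)^2=-(\omega-\delta)^2 .
\]
Thus $T$ from \eqref{tdef} has a genuine second-order pole at $\delta$, which in the language of \eqref{eq:Tdef} means $N=1$, $M_1=2$. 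Consequently the product in \eqref{transb} is just $(\omega-\delta)^2$ and, since the product over the remaining poles in \eqref{penmult} is empty, $P(\lambda)=\widehat P(\lambda+\delta)$ with
\[
\widehat P(\omega)=(A-\alc)^{-\frac{1}{2}}\Big[(A-\omega^2)(\omega-\delta)^2+\omega^2B\Big](A-\alc)^{-\frac{1}{2}}.
\]
Here the rational term of $T$ has cancelled against $(\omega-\delta)^2$, so $\widehat P$ is a genuine operator polynomial of degree four.

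Next I would set $\omega=\lambda+\delta$, expand $(\lambda+\delta)^2=\lambda^2+2\delta\lambda+\delta^2$, and use $\delta^2=-d^2/4=-c$ to write
\[
\big(A-(\lambda+\delta)^2\big)\lambda^2+(\lambda+\delta)^2B
=-\lambda^4-2\delta\lambda^3+(A-\delta^2+B)\lambda^2+2\delta B\,\lambda+\delta^2 B .
\]
Conjugating each coefficient by $(A-\alc)^{-\frac{1}{2}}$, using $(A-\alc)^{-\frac{1}{2}}A(A-\alc)^{-\frac{1}{2}}=I+\alc(A-\alc)^{-1}$ and writing $\widehat B:=(A-\alc)^{-\frac{1}{2}}B(A-\alc)^{-\frac{1}{2}}$, expresses the five coefficients of $P$ through $\widehat B$, $(A-\alc)^{-1}$, and $I$. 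Finally, introducing $H:=-\frac{d^2}{4}\widehat B$, so that $\widehat B=-\frac{4}{d^2}H$, $\delta^2\widehat B=H$, $2\delta\widehat B=\frac{4i}{d}H$ and $-2\delta=id$, and substituting these identities collapses the coefficients to exactly the formulas \eqref{Ai02c}, with $P(\lambda)=\sum_{j=0}^4\lambda^jP_j$ as in \eqref{pent}.

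Since every step is an explicit finite computation there is no genuine obstacle; the only care needed is in (i) checking that the pole order is exactly two, so that the factor $(\omega-\delta)^2$ annihilates the rational part of $T$ with no remainder and $\widehat P$ extends to a degree-four operator polynomial, and (ii) tracking the sign coming from $c-id\omega-\omega^2=-(\omega-\delta)^2$ together with the powers of $i$ and $d$ hidden in $\delta=-id/2$ (so that $\delta^2=-d^2/4$), so that the final normalization in terms of $H$ matches \eqref{Ai02c} term by term. This parallels the computation behind Lemma \ref{ratpol} in the case $d\neq2\sqrt{c}$.
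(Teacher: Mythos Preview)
Your proposal is correct and is exactly the direct computation the paper has in mind: the paper's own proof is the single line ``Follows directly from \eqref{penmult},'' and what you have written is precisely that computation spelled out, including the key observations that $d=2\sqrt c$ forces $\delta_+=\delta_-=-id/2$, that $c-id\omega-\omega^2=-(\omega-\delta)^2$, and that the normalizing product in \eqref{penmult} is empty.
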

\begin{proof}
Follows directly from \eqref{penmult}.
\end{proof}

\begin{lem}\label{3invlemc}

Assume $d=2\sqrt{c}$ and let $P$ be defined as  \eqref{pent}. Set $\Hs_0:=\ker H$ and define $V_0$ as in \eqref{1partis}. 
If $\ac>-d^2/4$ then $V_0^* P_2V_0$ is invertible.
\end{lem}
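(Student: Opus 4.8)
The statement to prove is that, under the assumption $d = 2\sqrt{c}$ (so $\delta_+ = \delta_- = -id/2$ and $H = -\tfrac{d^2}{4}(A-\alc)^{-1/2}B(A-\alc)^{-1/2}$), if $\ac > -d^2/4$ then $V_0^*P_2V_0$ is invertible, where $\Hs_0 := \ker H$ and $V_0$ is the partial isometry from \eqref{1partis}. The plan is to mimic the proofs of Lemma \ref{3invlem} and Lemma \ref{3invlemi}: compute $V_0^*P_2V_0$ explicitly, observe it is a compact perturbation of the identity on $\Hs_0$, and then rule out $0$ being an eigenvalue.

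First I would restrict the formula for $P_2$ in \eqref{Ai02c} to $\Hs_0$. Since $\Hs_0 = \ker H$, the term $-\tfrac{4}{d^2}H$ contributes nothing after compression by $V_0$, so
\[
	V_0^*P_2V_0 = I_{\Hs_0} + \left(\alc + \frac{d^2}{4}\right)V_0^*(A-\alc)^{-1}V_0.
\]
Now $(A-\alc)^{-1}$ is a bounded nonnegative selfadjoint operator (since $\inf W(A) > \alc$, indeed $\inf W(A) = \ac$ with $\ac > \alc$), so $V_0^*(A-\alc)^{-1}V_0$ is a bounded nonnegative selfadjoint operator on $\Hs_0$ with $\|V_0^*(A-\alc)^{-1}V_0\| \le (\ac - \alc)^{-1}$.

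Next I would split into the two cases according to the sign of the scalar $\alc + d^2/4$. If $\alc + d^2/4 \ge 0$, then $V_0^*P_2V_0$ is the sum of $I_{\Hs_0}$ and a nonnegative operator, hence bounded below by $I_{\Hs_0}$ and therefore invertible. If $\alc + d^2/4 < 0$, i.e. $\alc < -d^2/4 < \ac$ — wait, the hypothesis is $\ac > -d^2/4$, which still allows $\alc < -d^2/4$ — then I use the norm bound: $\|(\alc + d^2/4)V_0^*(A-\alc)^{-1}V_0\| \le |\alc + d^2/4|/(\ac-\alc)$. Since $\alc < -d^2/4 < \ac$ we have $|\alc + d^2/4| = -d^2/4 - \alc < \ac - \alc$, so this norm is strictly less than $1$, and $V_0^*P_2V_0$ is a small perturbation of the identity, hence invertible by a Neumann series argument. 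This is exactly the structure of Lemma \ref{3invlemi}'s proof with $\delta_\pm^2$ replaced by $-d^2/4$.

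The main (minor) obstacle is keeping the bookkeeping straight: one must confirm that $(A-\alc)^{-1}$ really is nonnegative and norm-bounded by $(\ac-\alc)^{-1}$ — which follows from $\inf W(A) = \ac$ — and that the hypothesis $\ac > -d^2/4$ is exactly what makes $|\alc + d^2/4| < \ac - \alc$ in the delicate case $\alc < -d^2/4$. There is no genuine analytic difficulty here; the whole argument is a two-line computation of the compression followed by a sign/Neumann-series dichotomy, completely parallel to Lemma \ref{3invlem} and Lemma \ref{3invlemi}.
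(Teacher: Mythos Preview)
Your proposal is correct and follows essentially the same approach as the paper: the paper computes $V_0^*P_2V_0=I_{\Hs_0}+(\alc+d^2/4)V_0^*(A-\alc)^{-1}V_0$ and then simply says ``the rest of the proof is similar to the proof of Lemma \ref{3invlemi}'', which is exactly the sign/Neumann-series dichotomy you wrote out. Your bookkeeping is fine; in particular the case split on the sign of $\alc+d^2/4$ and the norm bound $|\alc+d^2/4|<\ac-\alc$ when $\alc<-d^2/4<\ac$ are precisely what Lemma \ref{3invlemi} does with $-d^2/4$ in place of $\delta_\pm^2$.
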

\begin{proof}
From \eqref{Ai02c} it follows that
\[
	V_0^*P_2V_0=I_{\Hs_0}+\left(\alc+\frac{d^2}{4}\right)V_0^*(A-\alc)^{-1}V_0,
\]
and the rest of the proof is similar to the proof of Lemma \ref{3invlemi}.
\end{proof}

\begin{lem}\label{ratinf}
Assume that $d=2\sqrt{c}$ and let $P$ denote the operator polynomial \eqref{pent}. Define the operator polynomial
\begin{equation}\label{pentinf}
	\widetilde{P}(\lambda):=H' -\lambda i d H' +\lambda^2 \left(I-\left(\alc+\frac{d^2}{4}\right)H' -\frac{4}{d^2}H  \right)  + \lambda^3 \frac{4i}{d}H  +\lambda^4 H,
\end{equation}
where $H':=-(A-\alc)^{-1}$. Then $ \widetilde{P}(\lambda)=\lambda^4P(\lambda^{-1})$,  for $\lambda\in\C\setminus\{0\}$ and
\[
	\sigma(\widetilde{P})=\{0\}\cup\{\lambda^{-1}:\lambda\in\sigma(P)\setminus\{0\}\},\quad
	\overline{W(\widetilde{P})}=\{0\}\cup\{\lambda^{-1}:\lambda\in\overline{W(P)}\setminus\{0\}\}.
\]
\end{lem}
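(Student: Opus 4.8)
The plan is to prove the algebraic identity $\widetilde P(\lambda)=\lambda^4P(\lambda^{-1})$ first, and then read off the spectrum and numerical range statements from it, the only delicate point being the bookkeeping at the exceptional value $\lambda=0$.

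\emph{Step 1 (the identity).} Expand $\lambda^4P(\lambda^{-1})=\lambda^4P_0+\lambda^3P_1+\lambda^2P_2+\lambda P_3+P_4$ with the coefficients from \eqref{Ai02c}, substitute $H'=-(A-\alc)^{-1}$, and use $d=2\sqrt c$ (so $d^2/4=c$). Matching term by term against \eqref{pentinf} is then immediate: $P_4=-(A-\alc)^{-1}=H'$ gives the constant term, $P_3=id(A-\alc)^{-1}=-idH'$ the $\lambda$-term, $P_2=I+(\alc+\tfrac{d^2}{4})(A-\alc)^{-1}-\tfrac4{d^2}H=I-(\alc+\tfrac{d^2}{4})H'-\tfrac4{d^2}H$ the $\lambda^2$-term, and $P_1=\tfrac{4i}{d}H$, $P_0=H$ the $\lambda^3$- and $\lambda^4$-terms. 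Hence $\widetilde P(\lambda)=\lambda^4P(\lambda^{-1})$ for all $\lambda\in\C\setminus\{0\}$.

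\emph{Step 2 (spectrum).} For $\lambda\neq0$ the scalar $\lambda^4$ is nonzero, so $\widetilde P(\lambda)$ is invertible if and only if $P(\lambda^{-1})$ is; thus $\lambda\in\sigma(\widetilde P)\setminus\{0\}$ precisely when $\lambda^{-1}\in\sigma(P)$, and since $\lambda^{-1}\neq0$ this gives $\sigma(\widetilde P)\setminus\{0\}=\{\mu^{-1}:\mu\in\sigma(P)\setminus\{0\}\}$. It remains to observe $0\in\sigma(\widetilde P)$: indeed $\widetilde P(0)=H'=-(A-\alc)^{-1}$ has range $\dom A$, which is a proper subspace of $\Hs$ because $A$ is unbounded, so $\widetilde P(0)$ is not surjective. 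Combining the two yields the claimed description of $\sigma(\widetilde P)$.

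\emph{Step 3 (numerical range).} Both $P$ and $\widetilde P$ are bounded operator polynomials, so $\dom P(\lambda)=\dom\widetilde P(\lambda)=\Hs$ for every $\lambda$. For $\lambda\neq0$ and $u\neq0$ one has $\ip{\widetilde P(\lambda)u}{u}=\lambda^4\ip{P(\lambda^{-1})u}{u}$, so $\lambda\in W(\widetilde P)\setminus\{0\}$ iff $\lambda^{-1}\in W(P)$, i.e. $W(\widetilde P)\setminus\{0\}=\{\mu^{-1}:\mu\in W(P)\setminus\{0\}\}$; since $z\mapsto z^{-1}$ is a homeomorphism of $\C\setminus\{0\}$, passing to closures gives $\overline{W(\widetilde P)}\setminus\{0\}=\{\mu^{-1}:\mu\in\overline{W(P)}\setminus\{0\}\}$. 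Finally $0\in\overline{W(\widetilde P)}$: pick unit vectors $u_n$ with $(A-\alc)^{-1/2}u_n\to0$ (possible since $A$ is unbounded), so that $\ip{H'u_n}{u_n}=-\|(A-\alc)^{-1/2}u_n\|^2\to0$ and $|\ip{Hu_n}{u_n}|\le\tfrac{d^2}{4}\|B\|\,\|(A-\alc)^{-1/2}u_n\|^2\to0$; then the scalar polynomials $\lambda\mapsto\ip{\widetilde P(\lambda)u_n}{u_n}$ converge coefficientwise to $\lambda^2$, so by Rouch\'{e}'s theorem, for $n$ large each of them has a zero inside any prescribed disk about the origin, and that zero is nonzero because $\ip{\widetilde P(0)u_n}{u_n}=\ip{H'u_n}{u_n}\neq0$. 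Hence $W(\widetilde P)$ accumulates at $0$, and together with the previous display this gives $\overline{W(\widetilde P)}=\{0\}\cup\{\mu^{-1}:\mu\in\overline{W(P)}\setminus\{0\}\}$.

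There is no serious obstacle here; the only thing that needs care is the handling of $\lambda=0$ — verifying that $0$ actually belongs to $\sigma(\widetilde P)$ and to $\overline{W(\widetilde P)}$ (both a consequence of $A$ being unbounded) while \emph{no other} point is spuriously added (which is guaranteed by the bijectivity of inversion on $\C\setminus\{0\}$). As an alternative to the Rouch\'{e} argument for $0\in\overline{W(\widetilde P)}$, one may invoke Corollary \ref{ratpolcormult}: $\overline{W(P)}\setminus\{0\}=\{\omega+\tfrac{id}{2}:\omega\in\overline{W(T)}\}$ is unbounded because two of the four roots of the quartic $p_{\alpha,\beta}$ of \eqref{3pabdef}, with $\alpha=\ip{Au}{u}$ and $\beta=\ip{Bu}{u}$, escape to $\infty$ as $\ip{Au}{u}\to\infty$; inversion then sends a sequence in $W(P)$ tending to $\infty$ to a sequence in $W(\widetilde P)$ tending to $0$.
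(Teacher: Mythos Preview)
Your proof is correct and follows the same direct-computation approach that the paper's one-line proof (``follows directly from \eqref{pent} and \eqref{ratpolt}'') implicitly intends. You go well beyond the paper by carefully handling the point $\lambda=0$ in both the spectrum and the numerical-range closure; the Rouch\'e/Hurwitz argument for $0\in\overline{W(\widetilde P)}$ and the observation that $\widetilde P(0)=H'$ has range $\dom A\subsetneq\Hs$ are genuine details the paper omits, and your alternative via unboundedness of $\overline{W(P)}$ is equally valid.
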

\begin{proof}
The claims follow directly from \eqref{pent} and \eqref{ratpolt}.
\end{proof}

\begin{lem}\label{mobrooti}
Let $T$ be defined as in \eqref{tdef},  set $\Omega:=\overline{W(A)}\times \overline{W(B)}$ and let $W_\Omega(T)$ denote the enclosure \eqref{3endef}. Assume that $d=2\sqrt{c}$  and that there is an open  disc $\Gh\subset\C$ such that $-id/2\in \Gh$ and  $\partial\Gh_\pm\cap W_\Omega(T)=\emptyset$. 
Then there are spectral divisors of order $2$ of the operator polynomials \eqref{pent} and  \eqref{pentinf} on some open disks $\G$ and $\G'$, respectively, with $0\in \G$ and $0\in \G'$.
\end{lem}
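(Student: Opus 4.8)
The plan is to establish the two spectral divisors in turn, deriving the one for \eqref{pentinf} from the one for \eqref{pent} by means of the inversion $\varphi\colon\lambda\mapsto\lambda^{-1}$. For \eqref{pent}, denote this polynomial by $P$ and set $\G:=\{\omega-\delta_\pm:\omega\in\Gh\}$; since $\Gh$ is an open disc containing $\delta_\pm=-id/2$, the set $\G$ is an open disc with $0\in\G$. By Lemma \ref{ratpolt}, $P$ is precisely the polynomial \eqref{penmult} attached to $T$ at its single pole $\delta_\pm$, whose multiplicity equals $2$ because $d=2\sqrt c$. From $W_\Omega(T)\supseteq\overline{W(T)}$ and the hypothesis $\partial\Gh\cap W_\Omega(T)=\emptyset$ we obtain $\partial\Gh\cap\overline{W(T)}=\emptyset$, so Lemma \ref{mobrootmult} (with $l=1$) immediately yields a spectral divisor of order $2$ of $P$ on the disc $\G$. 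Its proof moreover shows, through Corollary \ref{ratpolcormult}, that $\partial\G\cap\overline{W(P)}=\emptyset$ and that for every $u\in\Hs\setminus\{0\}$ the scalar polynomial $\ip{P(\pt)u}{u}$ has exactly two zeros in $\G$.

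For \eqref{pentinf}, denote this polynomial by $\widetilde P$. Since $0\in\mathrm{int}\,\G$, the map $\varphi$ carries the circle $\partial\G$ onto a circle $\widetilde C$ that again has $0$ in its interior (as $\varphi$ interchanges $0$ and $\infty$); put $\G':=\mathrm{int}\,\widetilde C$, an open disc with $0\in\G'$, and note that $\varphi$ maps $\C\setminus\overline\G$ bijectively onto $\G'\setminus\{0\}$. Lemma \ref{ratinf} gives $\widetilde P(\lambda)=\lambda^4P(\lambda^{-1})$ and $\overline{W(\widetilde P)}\setminus\{0\}=\varphi\big(\overline{W(P)}\setminus\{0\}\big)$; combined with $\partial\G\cap\overline{W(P)}=\emptyset$ and $0\notin\widetilde C$, this yields $\partial\G'\cap\overline{W(\widetilde P)}=\emptyset$, so $\ip{\widetilde P(\lambda)u}{u}\neq0$ on $\partial\G'$ for every $u\neq0$. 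For the zero count, fix $u\neq0$: by \eqref{Ai02c} the leading coefficient of $\ip{P(\pt)u}{u}$ equals $\ip{P_4u}{u}=-\ip{(A-\alc)^{-1}u}{u}<0$, so this scalar polynomial has degree exactly $4$; of its four zeros exactly two lie in $\G$ (by the preceding paragraph), hence exactly two lie in $\C\setminus\overline\G$. The zeros of $\ip{\widetilde P(\lambda)u}{u}=\lambda^4\ip{P(\lambda^{-1})u}{u}$ are the $\varphi$-images of the nonzero zeros of $\ip{P(\pt)u}{u}$; since $\varphi$ maps $\C\setminus\overline\G$ into $\G'$ and $\G\setminus\{0\}$ outside $\overline{\G'}$, it follows that $\ip{\widetilde P(\pt)u}{u}$ has exactly two zeros in $\G'$ for every $u\neq0$. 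As in the proof of Lemma \ref{mobrootmult}, this yields a spectral divisor of order $2$ of $\widetilde P$ on $\G'$.

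The step I expect to be the main obstacle is the zero count for $\widetilde P$: it relies on $\ip{P(\pt)u}{u}$ having full degree $4$ for every $u\neq0$ — otherwise a zero would escape to infinity and $\varphi$ would create a spurious zero of $\ip{\widetilde P(\pt)u}{u}$ at the origin, breaking the correspondence — and this is precisely where the positivity of $(A-\alc)^{-1}$ enters. Once that is in hand, the ``two zeros in $\G$'' property inherited from Lemma \ref{mobrootmult} transfers verbatim to ``two zeros in $\G'$'' for $\widetilde P$; the remaining checks (that $\G$ and $\G'$ are discs about $0$, that $\varphi$ interchanges the interior and exterior of a circle about $0$, and the implication $\partial\Gh\cap W_\Omega(T)=\emptyset\Rightarrow\partial\G\cap\overline{W(P)}=\emptyset$) are routine.
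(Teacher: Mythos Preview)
Your argument is correct and follows the same route as the paper. For $P$ you invoke Lemma~\ref{mobrootmult} together with $W_\Omega(T)\supseteq\overline{W(T)}$, exactly as the paper does; for $\widetilde P$ you use the inversion $\lambda\mapsto\lambda^{-1}$ supplied by Lemma~\ref{ratinf} and set $\G'$ to be the interior of $\varphi(\partial\G)$, which is precisely the paper's choice $\overline{\G'}=\{0\}\cup\{\lambda^{-1}:\lambda\in\C\setminus\G\}$. The paper's proof is a two-line sketch (``a similar argument holds for \eqref{pentinf}''), whereas you spell out the zero count explicitly --- in particular the observation that $\ip{P_4u}{u}=-\ip{(A-\alc)^{-1}u}{u}<0$ forces $\ip{P(\pt)u}{u}$ to have full degree~$4$, so no spurious zero of $\ip{\widetilde P(\pt)u}{u}$ is created at the origin and the two zeros in $\C\setminus\overline\G$ map bijectively to the two zeros in $\G'$. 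This is the right unpacking of what the paper leaves implicit.
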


\begin{proof}
For \eqref{pent} the claim is a consequence of Lemma \ref{mobrootmult} and the property $W_\Omega(T)\supset \overline{W(T)}$.
From Lemma \ref{ratinf}, a similar argument holds  for  \eqref{pentinf} with $\overline{\G'}:=\{0\}\cup\{\lambda^{-1}:\lambda\in\C\setminus\G\}$.
\end{proof}

 In Theorem \ref{3lemallBt}, we present sufficient conditions for accumulation of eigenvalues to the pole of $T$ in the case $d=2\sqrt{c}$. 
\begin{thm}\label{3lemallBt}
Let $T$ denote the operator function \eqref{tdef} and take $c=d^2/4$. 
Further assume that $\overline{W(A)}\subset(\alc,\infty)$, where $\alc$ is given by:
\begin{equation}\label{3Actable}
	\begin{array}{ c | c  }
	&\alc\\
	\hline \rule{0pt}{4ex} 
	 \dfrac{d^2}{4}\in \overline{W(B)}&\dfrac{d^2}{4}\rule[-2.2ex]{0pt}{0pt} \\
	 \hline\rule{0pt}{4.3ex} 
		\beta_0>\dfrac{d^2}{4}&-\dfrac{512 \beta_0^2 - 120 \beta_0 d^2 + 3 d^4 +4 (64 \beta_0 - 7 d^2) \sqrt{ 4 \beta_0^2 - \beta_0d^2}}{16d^2}\rule[-2.2ex]{0pt}{0pt}\\
	 \hline\rule{0pt}{6ex} 
	\beta_1< \dfrac{d^2}{4}&-\dfrac{\left(d+\sqrt{d^2-4\beta_1}\right)^2}{16}\left(\dfrac{\beta_0\left(d+\sqrt{d^2-4\beta_1}\right)^2}{\beta_1^2}+1\right)\\
	\end{array}\ .
\end{equation}
Then there is a branch of eigenvalues accumulating at $-id/2$ if and only if $\Hs\bot\ker B$ is infinite dimensional. If $\Hs\bot\ker B$ is finite dimensional there are $2 \dim \Hs\bot\ker B$ eigenvalues in the branch of eigenvalues. Furthermore, there is a branch of accumulating eigenvalues to complex $\infty$.
\end{thm}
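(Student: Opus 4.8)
The plan is to follow the scheme used for Theorem~\ref{3lemallB} and Theorem~\ref{3lemallBi}, but with the order-one tools replaced by their order-two counterparts: since $c=d^2/4$ forces $\delta_+=\delta_-=-id/2$, the pole of $T$ is double, so the auxiliary polynomial $P$ of \eqref{pent} (and its reciprocal companion $\widetilde P$ of \eqref{pentinf}) carries a spectral divisor of order $2$, and Theorem~\ref{main2} together with Lemma~\ref{mobrooti} takes over the role of Theorem~\ref{main} and Lemma~\ref{mobroot}. Concretely, I would first show that, for $\alc$ given by \eqref{3Actable}, there is an open disc $\Gh\subset\C$ with $-id/2\in\Gh$ and $\partial\Gh\cap W_\Omega(T)=\emptyset$; then Lemma~\ref{mobrooti} supplies order-$2$ spectral divisors of $P$ on a disc $\G\ni0$ and of $\widetilde P$ on a disc $\G'\ni0$, and the two assertions of the theorem follow from Theorem~\ref{main2} applied to $P$ (accumulation at $-id/2$, with the eigenvalue count) and to $\widetilde P$ (accumulation at complex $\infty$), transported back to $T$ via Corollary~\ref{ratpolcormult} and Lemma~\ref{ratinf}.

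The decisive step is the construction of $\Gh$, i.e. identifying the critical points of $W_\Omega(T)$ in the sense of Definition~\ref{3mergdef} and checking that each row of \eqref{3Actable} gives the largest $\alc$ at which the component of $W_\Omega(T)$ carrying $-id/2$ merges with the rest. The specialisation $4c=d^2$ simplifies the relevant algebra considerably: $\theta=0$; $c+d\mu+\mu^2=(\mu+d/2)^2\ge0$ for all $\mu\in\R$, so Lemma~\ref{3corhighlow} always selects $\bec=\beta_0$ away from the pole; the triple-root data of \eqref{3en3root} collapses to $\mu_\pm=-d/2$, $\beta_\pm=0$, $\alpha_-=-d^2/4$; and the coefficient $q_4$ of \eqref{3en2root} vanishes, so $q$ drops to a cubic. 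With these reductions, Lemma~\ref{3lemdou} disposes of critical points off $i\R$ and yields the threshold $\alc=c=d^2/4$ in the row $d^2/4\in\overline{W(B)}$; Lemma~\ref{3lemdoui} combined with Lemma~\ref{lemdoubleroot} reduces the remaining critical points to purely imaginary double roots of $p_{\alc,\beta_0}$, and taking the largest real root of $q(\,\cdot\,,\beta_0)$ produces the closed form in the row $\beta_0>d^2/4$. The row $\beta_1<d^2/4$ I expect to be the main obstacle: there one must analyse the two branches of $W_\Omega(T)$ running into $-id/2$ as $\alpha\to\infty$ (the factor $c-id\omega-\omega^2=-(\omega+id/2)^2$ has a double zero there), expand $p_{\alpha,\beta}$ in $\zeta:=\omega+id/2$, and locate the value of $\alc$ at which the pole component first touches an unbounded one; this is the analogue of the most technical passages in the proofs of Theorems~\ref{3lemallB} and~\ref{3lemallBi}, made heavier by the coalescence of the two poles, and it should produce the stated formula in $\beta_0$, $\beta_1$ and $\sqrt{d^2-4\beta_1}$.

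Given $\Gh$, the remaining steps are bookkeeping. By Lemma~\ref{ratpolt}, the polynomial $P$ of \eqref{pent} has the form \eqref{Adef2} with $M=4$, $k=2$ and $K=0$: its constant term is $H=-\tfrac{d^2}{4}(A-\alc)^{-1/2}B(A-\alc)^{-1/2}$, which is selfadjoint, lies in $S_p(\Hs)$ and has spectrum on a single ray; $P_1=\tfrac{4i}{d}H$, so $\ker H\subset\ker P_1$; and $P_2=I+\widetilde K$ with $\widetilde K$ compact. Put $\Hs_0:=\ker H$ and $\Hs_1:=\Hs\bot\Hs_0$. To invoke Theorem~\ref{main2} I must check that $I_{\Hs_1}+K_1$ is invertible: if $\beta_0>0$ then $\ker B=\{0\}$, so $\Hs_0=\emptyset$ and the claim is the first alternative of Lemma~\ref{2schurlem}; if $\beta_0=0$ then the rows of \eqref{3Actable} that can occur give $\alc\ge-d^2/4$, hence $\ac>-d^2/4$, and Lemma~\ref{2schurlem} together with Lemma~\ref{3invlemc} applies. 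Theorem~\ref{main2} then shows that $0$ is an accumulation point of a branch of eigenvalues of $P$ exactly when $\dim\Hs_1=\infty$, and that otherwise $P$ has exactly $2\dim\Hs_1$ eigenvalues (counting multiplicity) in $\G\setminus\{0\}$; since $\dim\Hs_1=\dim(\Hs\bot\ker B)$ (because $(A-\alc)^{-1/2}$ is injective with dense range) and the eigenvalues of $P$ near $0$ correspond, via Corollary~\ref{ratpolcormult}, to eigenvalues of $T$ near $-id/2$, while $\sigma(T)$ is discrete with $-id/2$ as its only possible accumulation point inside $\Gh$ by Lemma~\ref{lem:disc}, this is exactly the claimed dichotomy at $-id/2$.

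For the accumulation at complex $\infty$ I would run the same argument with $\widetilde P$ of \eqref{pentinf}: it again has the shape \eqref{Adef2} with $k=2$ and $K=0$, but its constant term is $H'=-(A-\alc)^{-1}$, which is selfadjoint, in $S_p(\Hs)$, has spectrum on a single ray and, crucially, has trivial kernel since $(A-\alc)^{-1}$ is injective. Hence $\Hs_0=\{0\}$ and $\Hs_1=\Hs$ is infinite dimensional ($A$ has compact resolvent), the invertibility of $I_{\Hs_1}+K_1$ is again the first alternative of Lemma~\ref{2schurlem}, and Lemma~\ref{mobrooti} provides the order-$2$ spectral divisor on $\G'$. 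Theorem~\ref{main2} then gives a branch of eigenvalues of $\widetilde P$ accumulating at $0$, and Lemma~\ref{ratinf}, which identifies $\sigma(\widetilde P)\setminus\{0\}$ with $\{\lambda^{-1}:\lambda\in\sigma(P)\setminus\{0\}\}$, turns this into a branch of eigenvalues of $T$ accumulating at complex $\infty$, finishing the proof.
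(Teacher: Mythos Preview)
Your end-game is exactly right and matches the paper: once a disc $\Gh$ with $-id/2\in\Gh$ and $\partial\Gh\cap W_\Omega(T)=\emptyset$ is in hand, Lemma~\ref{mobrooti} gives order-$2$ spectral divisors of $P$ and $\widetilde P$, the polynomial $P$ has the shape \eqref{Adef2} with $k=2$, $K=0$ and $\ker H\subset\ker P_1$, Lemma~\ref{2schurlem} combined with Lemma~\ref{3invlemc} handles the invertibility of $I_{\Hs_1}+K_1$, and Theorem~\ref{main2} finishes both assertions. Your observations on the $4c=d^2$ collapse (the sign of $c+d\mu+\mu^2$, the triple-root data, the vanishing of $q_4$) are also correct.

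The gap is in the decisive step, the construction of $\Gh$. You equate ``finding the largest $\alc$ at which the pole component merges'' with ``finding the largest $\alc$ for which a separating \emph{disc} exists'', and then propose to obtain the latter via the critical-point machinery of Lemmas~\ref{3lemdou} and~\ref{3lemdoui}. These two thresholds are in general \emph{different}. Because the spectral divisor here has order $2$, Lemma~\ref{mobrootmult} (and hence Lemma~\ref{mobrooti}) requires $\Gh$ to be a genuine disc, not merely a simply connected set; a pole component can be disjoint from the rest of $W_\Omega(T)$ without admitting any separating circle. The paper makes precisely this point in the remark after the proof: for $d^2/4\notin\overline{W(B)}$ the entries in \eqref{3Actable} are ``the smallest value such that a circle $\Gh$ that separate the component exists. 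For a smaller $\ac$ the components might still be disjoint but there will be no circle separating them.'' Consequently, for the rows $\beta_0>d^2/4$ and $\beta_1<d^2/4$ your plan of reading off the threshold as the largest real root of $q(\,\cdot\,,\beta_0)$ would not reproduce the formulas in \eqref{3Actable}; those formulas come from circle-tangency and strip conditions, not from merge points.

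What the paper actually does is bypass the critical-point analysis entirely and build explicit discs in each case. For $d^2/4\in\overline{W(B)}$ it takes $\Gh=\{|\omega+id/2|<d/2\}$ and uses \cite[Proposition 2.6]{ATEN} to verify $\partial\Gh\cap W_\Omega(T)=\emptyset$ when $\ac>d^2/4$. For $\beta_0>d^2/4$ it looks at the family $\Gh_r=\{|\omega-i(r-d/2)|<r\}$, shows $W_\Omega(T)\setminus i\R$ misses $\partial\Gh_{d/2}$, and then solves a tangency ansatz between $\partial\Gh_r$ and $W_{\R\times\beta_0}(T)$ to obtain the radius $r_0=(2\beta_0+\sqrt{4\beta_0^2-\beta_0 d^2})/d$; the entry in \eqref{3Actable} is exactly the condition ensuring the imaginary intersection point $iy_0$ stays outside $W_\Omega(T)$. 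For $\beta_1<d^2/4$ it shows that the horizontal strip $\{(-d-\sqrt{d^2-4\beta_1})/4<\Im\omega<(-d+\sqrt{d^2-4\beta_1})/4\}$ is free of $W_\Omega(T)\setminus i\R$, determines when the lower edge $i\mu$ of this strip avoids $W_\Omega(T)$ (this is where the formula in that row comes from), and then takes $\Gh$ to be a large disc whose boundary stays in the cleared strip. So the shape of the proof you should aim for is: choose, in each case, a concrete disc and verify the boundary condition directly; the critical-point lemmas are not the right tool here.
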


\begin{proof}
The proof is similar to the proofs of Theorem \ref{3lemallB} and of Theorem \ref{3lemallBi}. However,  to be able to utilize Lemma \ref{mobrooti}, $\Gh$ has to be a disk. Assume that $\frac{d^2}{4}\in \overline{W(B)}$ and let $\Gh$ denote the disc 
\[
 	\Gh:=\left\{\omega\in\C:\left|\omega+i\frac{d}{2}\right|<\frac{d}{2}\right\}.
\]
Assume $\omega\in W_\Omega(T)\cap \partial \Gh$, where $W_\Omega(T)$ is defined  in \eqref{3endef}. From \eqref{3Actable} it follows that  $\ac>0$, which implies $\omega\notin i\R$. Then \cite[Proposition 2.6]{ATEN} yields that
\begin{equation}\label{prop26cond}
	\frac{d^2}{4}\in W(B),\quad -4\left(\Im\omega+\frac{d}{2}\right)\Im\omega\in W(A)
\end{equation}
for $\omega\in W_\Omega(T)\cap\partial \Gh$. However, since  $-4\left(\Im\omega+d/2\right)\Im\omega\leq d^2/4$, by assumption we have $\omega\in W_\Omega(T)\cap \partial \Gh=\emptyset$. 

Assume $\frac{d^2}{4}<B$ and define 
\[
	\Gh_r:=\left\{\omega\in\C:\left|\omega-i\left(r-\frac{d}{2}\right)\right|<r\right\}.
\]
If $\omega\in W_\Omega(T)\setminus i\R \cap \partial\Gh_{\frac{d}{2}}$, then \cite[Proposition 2.6]{ATEN} implies
\[
	\frac{d^2}{4}\in \overline{W(A)},\quad -4\left(\Im \omega+\frac{d}{2}\right)\Im \omega\in \overline{W(B)},
\]
and since $-4(\Im \omega+d/2)\Im \omega\leq d^2/4$ by assumption we conclude that $W_\Omega(T)\setminus i\R \cap \partial\Gh_{\frac{d}{2}}=\emptyset$.

The point $0\in \Gh_{\frac{d}{2}}$ is a double root of $p_{0,\beta}$ for all $\beta\in \overline{W(B)}$ and $\pm id/2$ is not a root of $p_{\alpha,\beta}$ for $\alpha\geq 0$. Hence if $r(\alpha,\beta)$ for some pair $(\alpha,\beta)$, $\alpha\geq0$ is a root of $p_{\alpha,\beta}$ in $\Gh_{\frac{d}{2}}$. Then $r(\alpha,\beta)\in\Gh_{\frac{d}{2}}$ for all $(\alpha,\beta)$ and the root will approach the pole $-id/2$ as $\alpha\rightarrow\infty$. Since exactly two roots of $p_{\alpha,\beta}$ approach $-id/2$ as $\alpha\rightarrow\infty$
there cannot exist points in $W_\Omega(T)\setminus \Gh_{\frac{d}{2}}$ arbitrarily close to $-id/2$.
Moreover, since $W_\Omega(T)$ is a closed set in $\C\setminus\{ -id/2\}$ this yields that there must be some smallest $r>d/2$ such that $W_\Omega(T)\setminus i\R \cap \partial\Gh_{r}\neq\emptyset$ holds.  Define $\Omega'=\R\times \overline{W(B)}$, then the boundary of $\overline{W_{\Omega'}(T)\setminus i\R}$ is  $\overline{W_{\R\times \beta_0}(T)\setminus i\R}\cup\overline{W_{\R\times \beta_1}(T)\setminus i\R}$. The boundary is smooth since it is given by the roots of $p_{\alpha,\beta_0}$ and of $p_{\alpha,\beta_1}$
and these polynomials do not have double roots in $\C\setminus i\R$ for $\alpha\in \R$. This means that for the minimum $r>d/2$ such that $W_\Omega(T)\setminus i\R \cap \partial\Gh_{r}\neq\emptyset$ it must hold that $\partial \Gh_r$ tangents $\overline{W_{\R\times \beta_0}(T)\setminus i\R}$ or  $\overline{W_{\R\times \beta_1}(T)\setminus i\R}$.  The ansatz that for some $r_0$, the disc $\Gh_{r_0}$ and  $W_{\R\times \beta_0}(T)$ have the same tangent  in $W_{\R\times \beta_0}(T)\setminus i\R\cap \partial\Gh_{r_0}$ then implies
\begin{equation}\label{eq:rvalue}
	r_0=\frac{2\beta_0+\sqrt{4\beta_0^2-\beta_0d^2}}{d},
\end{equation}
which is the smallest value such that $W_{\Omega'}(T)\setminus i\R \cap \partial\Gh_{r_0}\neq\emptyset$. Since $W_\Omega(T)\setminus i\R\subset W_{\Omega'}(T)\setminus i\R$ it follows trivially that for $r<r_0$ we have that $W_{\Omega}(T)\setminus i\R \cap \partial\Gh_{r}=\emptyset$. In the nest step we consider the points on the imaginary axis:
\[
	\Gh_{r_0}\cap i\R=\left\{-i\frac{d}{2},i y_0 \right\}, \quad y_0:=\frac{4\beta_0+2\sqrt{4\beta_0^2-\beta_0d^2}}{d}-\frac{d}{2}.
\]
From the condition $\ac>\alc$ given by \eqref{3Actable} it follows that $i y_0\notin W_{\Omega}(T)$. Since there are no points in $W_\Omega(T)\setminus \Gh_{\frac{d}{2}}$ arbitrarily close to $-id/2$ it follows that there are no points in  $W_\Omega(T)\setminus \Gh_{r_0}$ arbitrarily close to $-id/2$. Hence,  from the closeness of $W_\Omega(T)$ in $\C\setminus\{-id/2\}$, there exist some $\epsilon_1>0$ such that $W_{\Omega}(T)\setminus i\R \cap \partial\Gh_{r_0-\epsilon}=-id/2$ for all $2\epsilon_1\geq\epsilon>0$. For $0<\epsilon_2\leq\epsilon_1$ it follows that $W_{\Omega}(T)\setminus i\R \cap \partial\Gh_{r_0-\epsilon_1-\epsilon_2}=-id/2$. Thus if we choose $\epsilon_2>0$ small enough and define the disc as
\[
	\Gh:=\left\{\omega\in\C:\left|\omega-i\left(r_0-\frac{d}{2}-\epsilon_1-2\epsilon_2\right)\right|<r_0-\epsilon_1\right\},
\]
then $\Gh\cap W_{\Omega}(T)=\emptyset$.

Assume that $B<d^2/4$ and take $\omega\in \C\setminus i\R$ that satisfies
\begin{equation}\label{3tstrip}
	\mu:=\frac{-d-\sqrt{d^2-4\beta_1}}{4}<\omega_\Im<\frac{-d+\sqrt{d^2-4\beta_1}}{4}. 
\end{equation}
Then straight forward computations show that $\omega\notin W_\Omega(T)$. From the definition of $p_{\alpha,\beta}$ the point $i\mu$ is in $W_\Omega(T)$ if and only if
\[
	-\frac{\left(d+\sqrt{d^2-4\beta_1}\right)^2}{16}\left(\frac{\beta\left(d+\sqrt{d^2-4\beta_1}\right)^2}{\beta_1^2}+1\right)\in W(A),
\]
for some $\beta\in W(B)$. Hence, if $W(A)$ satisfies the lower bound in \eqref{3Actable}, then $i\mu\notin W_\Omega(T)$. Further, since $W_\Omega(T)$ is a closed set in $\C\setminus \{-id/2\}$ and there are no  $\omega\in W_\Omega(T) \setminus i\R$ that satisfy \eqref{3tstrip}, there exists an $\epsilon>0$ such that $\omega\notin W_\Omega(T)$ for all $\omega$ that satisfies $\mu<\omega_\Im<\mu+\epsilon$. Furthermore, the component of $W_\Omega(T)$ with imaginary part below $\mu+\epsilon/2$ is bounded. Hence, $-id/2\in \Gh$  and $W_\Omega(T)\cap\partial\Gh=\emptyset$, for 
\[	
 	\Gh:=\left\{\omega\in\C:\left|\omega+i\left(\mu+\frac{\epsilon}{2}+r\right)\right|<r\right\}
\]
with $r>0$ large enough.

Hence, if $\alc$ is given by the tabular \eqref{3Actable} we can always find an open disc $\Gh$ such that $-id/2\in\Gh$ and $W_\Omega(T)\cap\partial\Gh=\emptyset$. Lemma \ref{mobrooti} then implies that $P$ has a spectral divisor of order $2$ on  $\G$ with $0\in\G$. From \eqref{Ai02c} it can be seen that $P$ has the form of \eqref{Adef2} with $k=2$, $K=0$, and $\ker P_1=\ker H$. Lemma \ref{1lemext} then yields that the operator polynomial $C$ has the same spectrum as $P$ in  $\C\setminus\{0\}$, and the structure
\[
	C(\lambda):=\lambda^2+C_1\lambda+(I_{\Hs_1}+K_1)H_1,
\]
where $H_1=V_0^*(A-\alc)^{-\frac{1}{2}}B(A-\alc)^{-\frac{1}{2}}V_0$, $K_1$, and $C_1$ are compact. If $\ker B\neq\{0\}$ then $\ac>-d^2/4$ follows from \eqref{3Actable}. Hence, $K=0$, Lemma \ref{2schurlem}, and Lemma \ref{3invlemc} yields that $I_{\Hs_1}+K_1$ is invertible. The claim then follows from Theorem \ref{main2}.

Lemma \ref{ratinf} and Lemma \ref{mobrooti} implies that there is a branch of accumulating eigenvalues to $\infty$ of $T$ if and only if there is a branch of accumulating eigenvalues to $0$ of \eqref{pentinf}. Since \eqref{pentinf} has the structure of \eqref{Adef2} with $k=2$ and $\ker H'=\{0\}$ the result follows by the same arguments as for the accumulation to $-id/2$.
\end{proof}

\begin{center}
\begin{figure}
\includegraphics[width=12cm]{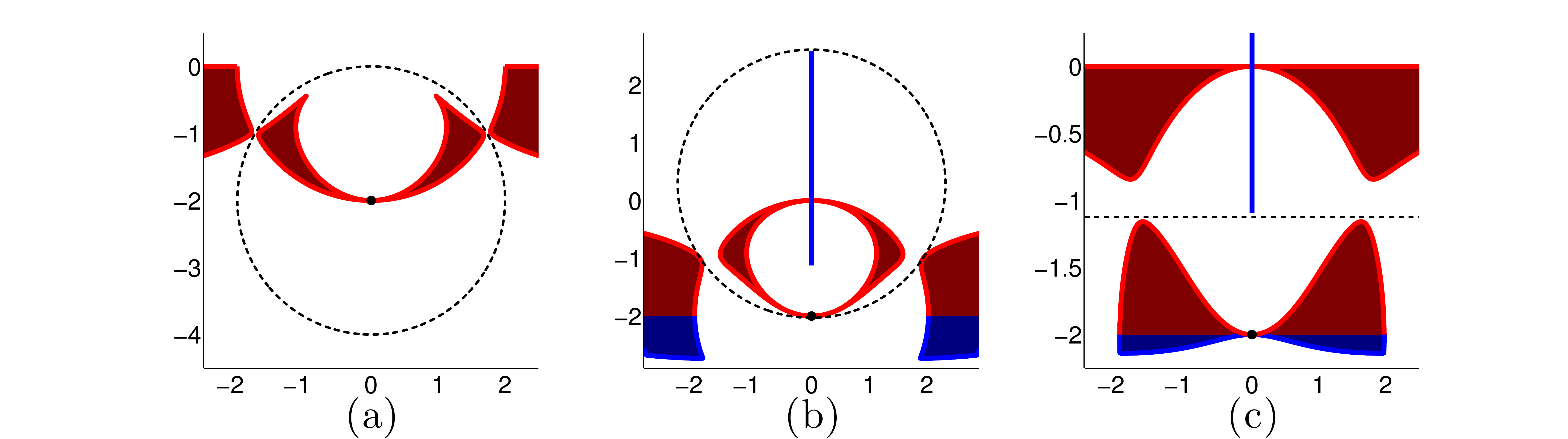}
\caption{Examples of $W_\Omega(T)$ in red and blue, with $c=d^2/4=4$, where the dashed circle separates a component from the rest of $W_\Omega(T)$. In (a) $d^2/4\in \overline{W(B)}$.  In (b) $\beta_0>d^2/4$.  In (c) $\beta_1<d^2/4$.  }\label{3fig:disjc}
\end{figure}
\end{center}

In the case $d^2/4\notin \overline{W(B)}$ the lower bound given in \eqref{3Actable} is the smallest value such that a circle $\Gh$ that separate the component exists. For a smaller $\ac$ the components might still be disjoint but there will be no circle separating them.

Figure \ref{3fig:disjc} illustrate cases where we can show the existence of a spectral divisor of order two. The dashed line in Figure \ref{3fig:disjc}.(c) is the boundary of the disc $\Gh$ containing the component with the pole. 

\begin{rem}
Since we know that the  sequence of accumulating eigenvalues is located in $W_\Omega(T)$, properties on the angle of the accumulation and similar results can be seen from the behavior of the set $W_\Omega(T)$ close to the pole. These properties of $W_\Omega(T)$ are illustrated in Figure \ref{3fig:disjp}, Figure \ref{3fig:disjm}, and in Figure \ref{3fig:disjc}.
\end{rem}

\begin{rem}

Sufficient conditions on $\alc$ for accumulation of eigenvalues to complex $\infty$ can also be found in the case $d\neq2\sqrt{c}$. However, in the transformed problem $\infty$ is mapped to a double root while the poles are simple. Hence, the sufficient conditions on $\alc$ in Theorem  \ref{3lemallB} and in Theorem \ref{3lemallBi}, will in general not be related to accumulation of eigenvalues to $\infty$.
\end{rem}

\section{Application to absorptive photonic crystals}
\label{Application}

In this section, we consider a rational operator function with applications in modelling propagation of electromagnetic waves in periodic structures, such as absorptive photonic crystals and metamaterials \cite{0305-4470-33-35-311,MR2164712,MR2718134,MR3023431}. The time-evolution is governed by Maxwell's equations with time-dependent coefficients. Using the Fourier-Laplace transform, we then obtain the stationary Maxwell equations, where the non-magnetic material properties are characterized by a space $x$ and frequency $\omega$ dependent permittivity function $\epsilon$ \cite{MR1409140}.

The problem studied in this section is a generalization of \cite{MR3543766}, where a selfadjoint operator function was studied. Here, we apply the theory developed in the previous sections to a non-selfadjoint rational operator function with periodic permittivity $\epsilon$. This enables us to consider multi-pole Lorentz models of $\epsilon$ in full generality  \cite{MR1409140} and we study an unbounded operator function that is used to determine Bloch solutions \cite[Chapter 3.1]{MR1232660}. 

Let $\Gamma$ denote the lattice $\mathbb{Z}^{n}$ and $\Omega:=(0,1]^{n}$ the unit cell of the lattice $\Gamma$.
In most applications the function $\epsilon$ is piecewise constant in $x\in\Omega$ and we let $\Omega  =\Omega_1\cup \cdots\cup \Omega_M$, $M\in\N$, denote a partitioning of $\Omega= (0,1]^n$. Let $\chi_{\Omega_m}$ denote the characteristic function of the subset $\Omega_m \subset (0,1]^n$ and define for given $a_m>0$ the operator
\[
W:\Ltwo\rightarrow\Ltwo, \quad W:=\sum_{m=1}^{M}a_m \chi_{\Omega_m}. 
\]
The material properties are then characterized by the multi-pole Lorentz model $\omega^2\epsilon(\cdot, \omega):\Ltwo\rightarrow\Ltwo$,
\begin{equation}\label{eq:epsSum}
	\omega^2\epsilon(\cdot, \omega) := -W\omega^2-\!\sum_{m=1}^{\wh{M}}\sum_{\ell=1}^{L_m} \frac{\omega^2\bb_{m,\ell}}{\cc_{m,\ell}-id_{m,\ell}\omega-\omega^2}\chi_{\Omega_m} (\cdot), \quad\omega\in\domain,
\end{equation}
with $\wh M \in \{1,2,\dots,M\}$, $\bb_{m,\ell}>0$, $\cc_{m,\ell}\geq 0$, $d_{m,\ell}\geq0$, and $\domain$ denotes the set of all $\omega\in\C$ that are not poles of \eqref{eq:epsSum}.  The case when $d_{m,\ell}=0$ for all $m$ and $n$ was studied in \cite{MR3543766}.

The dual lattice to $\Gamma$ is $\Gamma^{*}:=2\pi\Z^2$ and we define the Brillouin zone of the dual lattice $\Gamma^{*}$ as the set $\CK:=(-\pi,\pi]^{n}$. For fixed $k\in\CK=(-\pi,\pi]^n$ the shifted Laplace operator $\Delta_k:\Ltwo\rightarrow\Ltwo$ is defined as 
\begin{equation}
	-\Delta_k:=\sum_{j=1}^n \left (i\frac{\partial}{\partial x_j}-k_j \right )^2,
	\quad \dom \Delta_k=H^{2}(\T^n)
\end{equation}
and we consider spectral properties of  $\widehat T_k(\omega):\Ltwo\rightarrow\Ltwo$,
\begin{equation}
	\widehat T_k(\omega):=-\Delta_k-\omega^2\epsilon(\cdot,\omega),\quad k\in\CK=(-\pi,\pi]^n,\quad \omega\in\domain,
	\label{eq:bas}
\end{equation}
where $\dom \widehat T_k(\omega)= H^{2}(\T^2)$ for all $\omega\in\domain$. This function has after scaling with $W$ the form \eqref{eq:Tdef} and we define therefore the operator $T_k(\omega):\Ltwo\rightarrow\Ltwo$ by
\begin{equation}\label{eq:Sk}
\begin{aligned}
	T_k(\omega) 	& :=W^{-\frac 12}\widehat T_k(\omega)W^{-\frac 12}
					= A_k-\omega^2-\!\sum_{m=1}^{\wh{M}}\sum_{\ell=1}^{L_m} \frac{\omega^2}{\cc_{m,\ell}-id_{m,\ell}\omega-\omega^2}B_{m,\ell}, 
\end{aligned} 
\end{equation}
where 
\begin{equation}\label{eq:A0+A1}
  A_k\!:=-W^{-\frac 12}\Delta_k W^{-\frac 12}\!,\quad B_{m,\ell}\!:=\frac{\bb_{m,\ell}}{a_m}\chi_{\Omega_m}
\end{equation}
and the domain of $T_k$ is 
\[
  \dom T_k= \dom A_k = W^{\frac 12}H^{2}(\T^2). 
\]
The operator $\Delta_k$ has a compact resolvent and $\sigma (-\Delta_k)=\{|2\pi j +k|^n\,:\,j\in \Z^n\}$, \cite[p.\ 161-164]{MR1232660}. Hence $A_k$ is selfadjoint with discrete spectrum and $B_{m,\ell}$ is a bounded selfadjoint operator. Moreover, we have the estimates 
\[
	A_k\geq \frac{|k|^n}{\max\{a_m\}},\quad 0\leq B_{m,\ell} \leq \frac{\bb_{m,\ell}}{a_m}
\]
and it is clear that it exists a real $\alpha_0$ such that $(A_k-\alpha_0)^{-1}\in S^p(\Ltwo)$ for some $p>1$.

The operator function $T_k$ is of the form \eqref{eq:Tdefex} and Theorem \ref{prop:inf} implies that for $A_k$ sufficiently large there exists a branch of eigenvalues accumulating at $\infty$. Moreover, Theorem \ref{prop:mult} implies that there are branches of complex eigenvalues that accumulate to each of the poles, provided that $A_k$ is sufficiently large. In particular if $\widehat{M}=1$ and $L_1=1$ then Theorem \ref{3lemallB}, Theorem \ref{3lemallBi} or Theorem \ref{3lemallBt} (depending on the case) can be used to find a sufficient lower bound on $A_k$. These propositions also state sufficient conditions for completeness and minimality of the set of eigenvectors and associated vectors corresponding to a branch of eigenvalues accumulating at one of the poles. 

In Figure \ref{fig:clustpic} we present an example where $\widehat{M}=1$, $L_1=1$, $k=(\sqrt{3}\pi/2,\pi/2)$, $W=I_\Hs$, $b_{1,1}=50$, $c_{1,1}=9$,  $d_{1,1}=2$, and $n=2$. Consequently, $\overline{W(A_k)}=[\pi^2,\infty)$ and  $\overline{W(B_{1,1})}=[0,50]$. Since the parameters fulfill the conditions $d_{1,1}^2/4\in\overline{W(B_{1,1})}$ and  $c_{1,1}<\overline{W(A_k)}$,  Theorem \ref{3lemallB} implies that branches of complex eigenvalues exists that accumulate to the poles of $T_k$.  Figure \ref{fig:clustpic} also depicts numerically computed eigenvalues, where the method presented in \cite{MR2876569} was used.

\begin{centering}
\begin{figure}
\includegraphics[width=12cm]{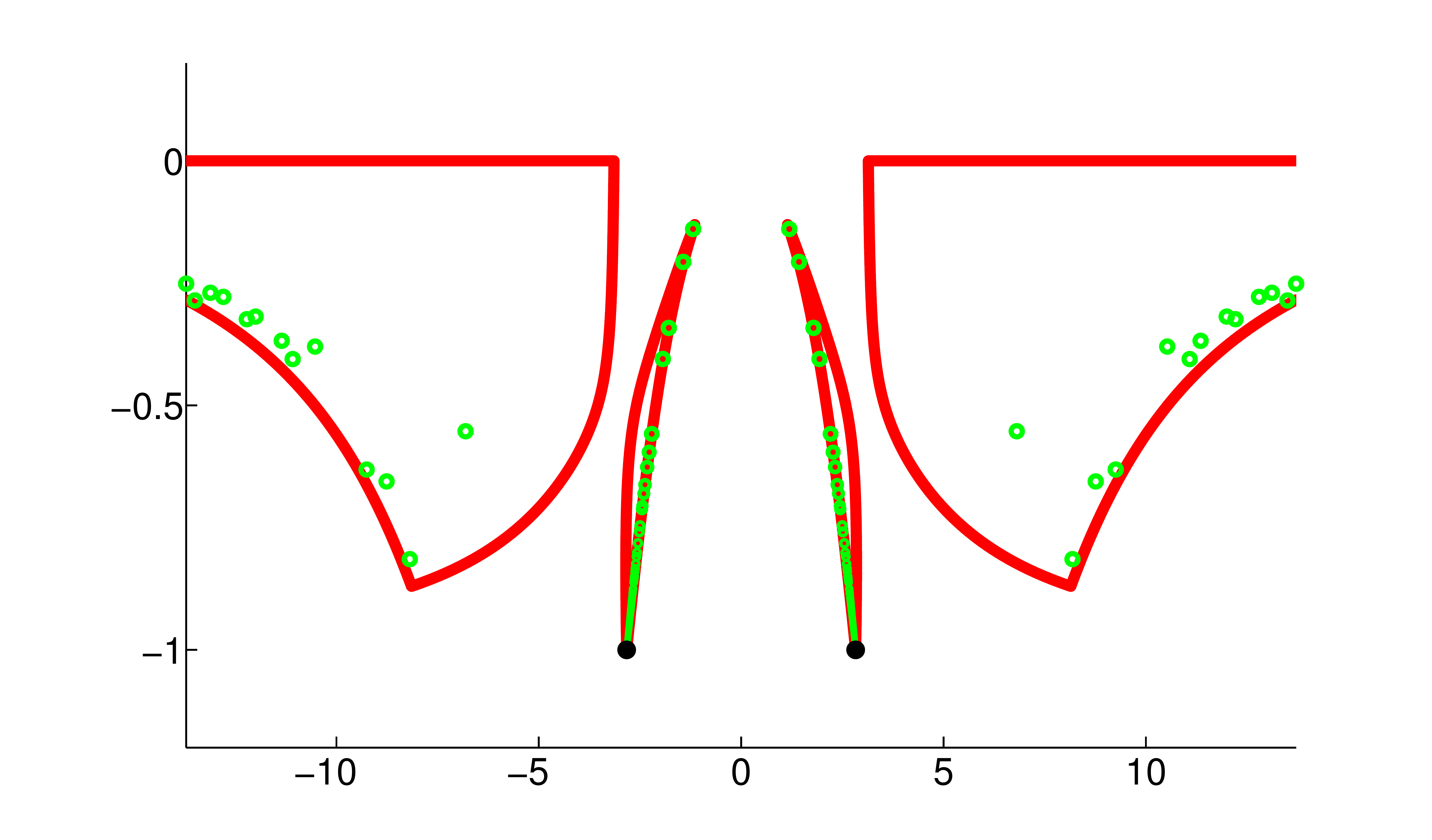}
\caption{Visualization of accumulation of eigenvalues for $T_{k}$, when $\widehat{M}=1$, $L_1=1$, $k=(\sqrt{3}\pi/2,\pi/2)$, $W=I_\Hs$, $b_{1,1}=50$, $c_{1,1}=9$,  $d_{1,1}=2$, and $n=2$. The boundary of $W_\Omega(T_k)$ is given by the solid lines, the circles are numerically computed eigenvalues, and the dots are the poles $\pm\sqrt{8}-i$. }\label{fig:clustpic}
\end{figure}
\end{centering}

\vspace{3.5mm}

{\small
{\bf Acknowledgements.} \ 
The authors gratefully acknowledge the support of the Swedish Research Council under Grant No.\ $621$-$2012$-$3863$. 
}

\bibliographystyle{alpha}
\bibliography{bibliography}

\vspace{-2mm}

\end{document}